\newcommand{\calA}{\mathcal{A}}
\newcommand{\calB}{\mathcal{B}}
\newcommand{\calF}{\mathcal{F}}
\newcommand{\calG}{\mathcal{G}}
\newcommand{\calO}{\mathcal{O}}
\newcommand{\calP}{\mathcal{P}}
\newcommand{\calM}{\mathcal{M}}
\newcommand{\calS}{\mathcal{S}}
\newcommand{\ZZ}{\mathbb{Z}}
\newcommand{\QQ}{\mathbb{Q}}
\newcommand{\RR}{\mathbb{R}}
\newcommand{\kk}{\Bbbk}
\newcommand{\ab}{\mathbf{a}}
\newcommand{\bb}{\mathbf{b}}
\newcommand{\cb}{\mathbf{c}}
\newcommand{\eb}{\mathbf{e}}
\newcommand{\fb}{\mathbf{f}}
\newcommand{\ib}{\mathbf{i}}
\newcommand{\jb}{\mathbf{j}}
\newcommand{\tb}{\mathbf{t}}
\newcommand{\ub}{\mathbf{u}}
\newcommand{\vb}{\mathbf{v}}
\newcommand{\wb}{\mathbf{w}}
\newcommand{\xb}{\mathbf{x}}
\newcommand{\yb}{\mathbf{y}}
\newcommand{\zb}{\mathbf{z}}
\newcommand{\Hom}{\operatorname{Hom}}
\newcommand{\ang}[1]{\langle #1 \rangle}
\newcommand{\Gro}{Gr\"{o}bner }
\def\opn#1#2{\def#1{\operatorname{#2}}} 
\opn\Cl{Cl} \opn\conv{conv} \opn\deg{deg} \opn\rank{rank} \opn\Spec{Spec} \opn\Stab{Stab} \opn\aff{aff} \opn\div{div} \opn\GL{GL}
\opn\cone{cone} \opn\End{End} \opn\Hom{Hom} \opn\mod{mod} \opn\gldim{gldim} \opn\pdim{pdim} \opn\diag{diag} \opn\vert{vert}
\opn\Block{Block} \opn\Pyr{Pyr} \opn\max{max} \opn\min{min} \opn\ini{in} \opn\rev{rev} \opn\ker{ker} \opn\lat{lat} \opn\pull{pull} \opn\rev{rev} \opn\Sym{Sym} \opn\supp{supp} \opn\int{int} \opn\star{star} \opn\sign{sign} \opn\Gale{Gale}
\newtheorem{thm}{Theorem}[section]
\newtheorem{lem}[thm]{Lemma}
\newtheorem{prop}[thm]{Proposition}
\newtheorem{p}{Problem}
\newtheorem{q}{Question}
\theoremstyle{definition}
\newtheorem{ex}[thm]{Example}
\theoremstyle{remark}
\begin{document}

\title{Toric rings of $(0,1)$-polytopes with small rank}
\author{Koji Matsushita}

\address{Department of Pure and Applied Mathematics, Graduate School of Information Science and Technology, Osaka University, Suita, Osaka 565-0871, Japan}
\email{k-matsushita@ist.osaka-u.ac.jp}

\subjclass[2020]{
Primary 13F65; 
Secondary 13C20, 
52B20} 

\keywords{Toric rings, $(0,1)$-polytopes, Divisor class groups}

\maketitle

\begin{abstract} 
The rank of a $d$-dimensional polytope $P$ is defined by $F-(d+1)$, where $F$ denotes the number of facets of $P$.
In this paper, We focus on the toric rings of $(0,1)$-polytopes with small rank.
We study their normality, the torsionfreeness of their divisor class groups and the classification of their isomorphism classes.
\end{abstract}

\bigskip


\section{Introduction}
Throughout the present paper, let $\kk$ be an algebraically closed field of characteristic $0$, for simplicity.
\subsection{Backgrounds}
For an integral polytope $P\subset \RR^d$, let $\kk[P]$ denote the toric ring of $P$, i.e., 
$$\kk[P]=\kk[t_1^{v_1}\cdots t_d^{v_d}t_0 : (v_1,\ldots,v_d)\in P\cap\ZZ^d].$$
Toric rings of integral polytopes are homogenous affine semigroup rings and their algebraic properties have been well investigated.
In particular, normality provides various important properties to toric rings and some algebraic objects can be described in the terms associated with polytopes.
In fact, it is well known that normal toric rings are Cohen-Macaulay for any field $\kk$ (\cite{Ho}) and that the canonical module of the normal toric ring is described as the ideal generated by the monomials corresponding to the elements in the interior of $\ZZ_{\ge 0}\calA(P)$ (\cite{S78}), where $\calA(P)=\{(\vb,1) \in \ZZ^{d+1} : \vb\in P\cap \ZZ^d \}$ and 
$\ZZ_{\ge 0}\calA(P)=\{a_1\xb_1+\cdots +a_n\xb_n : \xb_1,\ldots,\xb_n\in \calA(P), a_1,\ldots,a_n \in \ZZ_{\ge 0} \}$.
Moreover, the toric ring $\kk[P]$ is normal if and only if it is a Krull ring (cf. \cite[Theorem~9.8.13]{Villa}).
In this case, the divisor class group of $\kk[P]$ can be easily computed by using the generators and the support forms of $\ZZ_{\ge 0}\calA(P)$ (see Section~\ref{subsec:toric}).

The divisor class group $\Cl(\kk[P])$ of a normal toric ring $\kk[P]$, which is a finitely generated abelian group, is one of the most interesting invariants.
Its elements are identified with the isomorphism classes of the divisorial ideals of $\kk[P]$.
Recently, conic divisorial ideals, which are a certain class of divisorial and a special kind of maximal Cohen-Macaulay modules of rank one, are well studied (see, e.g., \cite{Bru, BG1}).
Indeed, they play beneficial roles in the theory of non-commutative algebraic geometry as well as commutative rings.
For example, we can construct non-commutative crepant resolutions (NCCRs, for short), which were introduced by Van den Bergh (\cite{VdB}), by considering the endomorphism ring of the direct sum of some conic divisorial ideals.
In fact, they are constructed for many classes of toric rings (see, e.g., \cite{HM, HN, M2, N, SpVdB, VdB}, and so on).
However, in general, the existence of NCCRs for Gorenstein toric rings is still open.
In solving this problem, it is natural to consider the case of the toric rings whose divisor class groups have small rank.
On the other hand, it was shown in \cite{DITW} that if a Cohen-Macaulay normal domain $R$ has an NCCR, then $R$ is $\QQ$-Gorenstein.
In particular, if the divisor class group of $R$ is torsionfree and $R$ is not Gorenstein, then $R$ does not have an NCCR.
Therefore, it is important to decide whether divisor class groups are torsionfree.

The convex hull of a finite set of $(0,1)$-vectors is called a $(0,1)$-polytope.
It arises from various combinatorial objects such as partially ordered sets (posets, for short), graphs and matroids.
For instance, the following are families of $(0,1)$-polytopes and their toric rings are well studied:
\begin{itemize}
\item Order polytopes (see \cite{S86} and Section~\ref{subsection:01});
\item Stable set polytopes (\cite{Ch}) $\supset$ Chain polytopes (\cite{S86});
\item Edge polytopes (see e.g., \cite{OH98,SVV});
\end{itemize}
Of course, these are not the only ones, there are much more; cut polytopes (\cite{DL}), perfect matching polytopes (\cite{E}), matroid polytopes (\cite{O}), and so on.
In addition, compressed polytopes introduced by Stanley \cite{S80} are a natural class of integral polytopes and include some of the $(0,1)$-polytopes mentioned above. 
It is known that the toric ring of a compressed polytope is isomorphic to that of a $(0,1)$-polytope (\cite[Theorem~2.4]{Su}).

By studying the commutative ring-theoretic properties of their toric rings, we can solve problems related to underlying combinatorial objects.

\medskip

In the present paper, we focus on the \textit{rank} of an integral polytope $P$, which is defined by
$$\rank P= F-(\dim P +1),$$
where $F$ denotes the number of facets of $P$.
Note that $\rank P$ is a nonnegative integer.
Actually, the rank of $P$ coincide with that of $\Cl(\kk[P])$ if $P$ is normal (see Section~\ref{subsec:toric}).
Namely, we deal with toric rings whose divisor class groups have small rank.

The goal of the present paper is to analyze the normality and torsionfreeness of $(0,1)$-polytopes, and to classify the isomorphism classes of their toric rings in the case where the underlying polytopes have small rank.



\subsection{Normality}
We say that an integral polytope $P$ is normal if so is $\kk[P]$.
The normality of an integral polytope $P$ can be characterized in terms of the lattice points generated by $\calA(P)$ (see Section~\ref{subsec:toric}).
In general, it is not easy to determine if $P$ is normal or not,
but there is a useful sufficient condition for $P$ to be normal; $P$ is normal if $P$ possesses a unimodular triangulation (cf. \cite[Corollary~4.12]{HHO}).
This implies that order polytopes and stable set polytopes of perfect graphs including chain polytopes are normal since these are compressed polytopes (see \cite{OH01}), which possess a unimodular triangulation.
Meanwhile, it is known that the edge polytope of a graph $G$ is normal if and only if $G$ satisfies the odd cycle condition (\cite{OH98,SVV}).

As this shows, $(0,1)$-polytopes are not necessarily normal.
However, non-normal $(0,1)$-polytopes seem to have large rank.
In fact, we can observe that the rank of a non-normal edge polytope is at least 4 by \cite[Proposition~4.4]{HM3} and some arguments. 
Therefore, we are interested in the normality of integral polytopes with small rank and consider the following problem:

\begin{p}\label{p:normal}
Examine the normality of $(0,1)$-polytopes with small rank.
\end{p}



\subsection{Torsionfreeness}
We say that a normal integral polytope $P$ is torsionfree if so is $\Cl(\kk[P])$.
The torsionfreeness of a normal integral polytope $P$ can be determined by computing the Smith normal form of the matrix constructed from the lattice points in $P$ and the facet defining inequalities of $P$ (see Section~\ref{subsec:toric}).
In \cite{M}, a sufficient condition for $P$ to be torsionfree is given.
As a corollary, it follows that compressed polytopes are torsionfree (\cite[Corollary~3.4]{M}), and so are order polytopes and stable set polytopes of perfect graphs.
Moreover, normal edge polytopes are also torsionfree (\cite[Theorem~3.6]{HM3}).
Furthermore, recently, the divisor class groups of toric face rings, which can be regarded as the toric rings of $(0,1)$-polytopes arising from simplicial complexes, are investigated in \cite{HHMQ}.
It is shown that they are torsionfree if their underlying toric face rings are normal (\cite[Corollary~1.8]{HHMQ}). 

These facts pose us the following problem:

\begin{p}\label{p:torsion}
Are normal $(0,1)$-polytopes always torsionfree?
\end{p}

\medskip


\subsection{Classification}
The classification of the isomorphism classes and relationships among certain classes of the toric rings whose divisor class groups have small rank have been studied in \cite{HM3} as follows. 
Let ${\bf Order}_n$ (resp. ${\bf Stab}_n$, ${\bf Edge}_n$) denote 
the sets of isomorphic classes of the toric rings of order polytopes (resp. stable set polytopes of perfect graphs, normal edge polytopes) with rank $n$.
Then, the following relationships hold:
\begin{itemize}
\setlength{\parskip}{0pt} 
\setlength{\itemsep}{3pt}
\item ${\bf Order}_n={\bf Stab}_n={\bf Edge}_n$ if $n\le 1$.
\item ${\bf Stab}_2 \cup {\bf Edge}_2={\bf Order}_2$ and no inclusion between ${\bf Stab}_2$ and ${\bf Edge}_2$;
\item there is no inclusion among ${\bf Order}_n$, ${\bf Stab}_n$ and ${\bf Edge}_n$ if $n\ge 3$.
\end{itemize}
In particular, the relationship ${\bf Stab}_n\cup {\bf Edge}_n \subset {\bf Order}_n$ holds if $n\le 2$.

Now, we consider a generalization of this result.
Let ${\bf (0,1)}_n$ denote the sets of isomorphic classes of the toric rings of $(0,1)$-polytopes with rank $n$.
Clearly, the relationship ${\bf Order}_n\cup{\bf Stab}_n\cup{\bf Edge}_n \subset {\bf (0,1)}_n$ holds.
It follows from the third relationship of the above three families that one has ${\bf Order}_n \subsetneq {\bf (0,1)}_n$ if $n\ge 3$.
On the other hand, in the case $n\le 2$, the relationship between ${\bf (0,1)}_n$ and ${\bf Order}_n$ is very close.

\begin{p}\label{p:class}
Determine the set ${\bf (0,1)}_n$.
Also, does the relationship ${\bf (0,1)}_n={\bf Order}_n$ hold if $n\le 2$?
\end{p}

In classifying isomorphism classes of the toric rings of $(0,1)$-polytopes, 
to consider Gale-diagrams, which is an effective way to investigate the combinatorial types of polytopes with ``few vertices", is compatible with our study.
In this paper, we treat polytopes with small rank, i.e., ``few facets", so we can apply the method of Gale-diagrams by considering the dual polytopes.
We present the following problem:

\begin{p}\label{p:comb}
Does the combinatorial equivalence of two $(0,1)$-polytopes imply the isomorphism of their toric rings?
\end{p}



\subsection{Results}
Let $P$ be a $(0,1)$-polytope.
We give complete or partial answers to Problems~\ref{p:normal}, \ref{p:torsion}, \ref{p:class} and \ref{p:comb} in each case; 
\begin{center}
(r1) \; $\rank P=0$ or $1$, \quad \quad  (r2) \; $\rank P=2$, \quad \quad  (r3) \; $\rank P\ge 3$.
\end{center}

First, we provide the complete answers to the above problems in case (r1).
Actually, most of this result has been already given in \cite{M}:

\begin{thm}[Proposition~\ref{main:rank0} and Theorem~\ref{main:rank1}]
Let $P$ be a $(0,1)$-polytope with $\rank P \le 1$.
Then, $P$ is normal and torsionfree.
Moreover, the following relationships hold:
$${\bf (0,1)}_0={\bf Order}_0=\{\kk[x_1,\ldots,x_n] : n\in \ZZ_{>0}\} \text{ and }$$
$${\bf (0,1)}_1={\bf Order}_1=\{(\kk[x_1,\ldots,x_{n+1}]\#\kk[y_1,\ldots,y_{m+1}])\otimes_{\kk}\kk[z_1,\ldots,z_{l-1}] : n,m,l\in \ZZ_{>0}\}.$$
Furthermore, for two $(0,1)$-polytopes $P_1$ and $P_2$ with $\rank P_i\le 1$, these polytopes have the same combinatorial type if and only if $\kk[P_1]\cong \kk[P_2]$.
\end{thm}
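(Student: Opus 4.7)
The plan is to split into two rank cases, both relying on the fact that every $(0,1)$-vector is an extreme point of $[0,1]^d$ and therefore of any $(0,1)$-polytope $P\subseteq [0,1]^d$; consequently $P\cap\ZZ^d = V(P)$. For $\rank P = 0$, $P$ is a $d$-simplex with $d+1$ affinely independent $(0,1)$-vertices $v_0,\ldots,v_d$; their lifts $(v_i,1)\in\ZZ^{d+1}$ are $\QQ$-linearly independent, so $\kk[P]$ is generated by $d+1$ algebraically independent monomials, i.e., $\kk[P]$ is a polynomial ring in $d+1$ variables. This is automatically normal and torsionfree, and matches $\kk[O(C_d)]$, where $O(C_d) = \Delta^d$ is the order polytope of a chain. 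All classification and equivalence statements are then trivial in this case.

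For $\rank P = 1$, the two main ingredients are a combinatorial classification and an algebraic peeling. By polytope duality combined with the Gale-diagram classification of $d$-polytopes with $d+2$ vertices, $P$ is combinatorially equivalent to an $(l-1)$-fold pyramid over $\Delta^n\times\Delta^m$ for some $n,m\geq 1$ and $l\geq 1$ with $n+m+l-1 = d$. I then peel off the $l-1$ pyramid apices: whenever $v \in V(P)$ satisfies $v\notin\aff(V(P)\setminus\{v\})$, the vector $(v,1)$ is $\QQ$-linearly independent from the other exponent vectors, so $t^v t_0$ is transcendental over $\kk[P']$, where $P' := \conv(V(P)\setminus\{v\})$. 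Since $P\cap\ZZ^d = V(P)$, the toric ring $\kk[P]$ is generated by $\kk[P']$ together with $t^v t_0$, and hence $\kk[P] \cong \kk[P'][z]$. Iterating reduces the problem to a $(0,1)$-polytope $\bar P$ combinatorially equivalent to $\Delta^n\times\Delta^m$.

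The main obstacle is to identify $\kk[\bar P]$ with $\kk[x_1,\ldots,x_{n+1}]\#\kk[y_1,\ldots,y_{m+1}]$. I label the $(n+1)(m+1)$ vertices of $\bar P$ by pairs $(i,j)\in [n+1]\times [m+1]$ using the combinatorial product structure; for each $i\neq i'$ and $j\neq j'$ the four vertices $v_{i,j}, v_{i',j}, v_{i',j'}, v_{i,j'}$ bound a combinatorial $\Delta^1\times\Delta^1$ $2$-face, hence are coplanar and in convex position. A key lemma, provable by a short case analysis on the coordinate patterns $(a_i,b_i,d_i)\in\{0,1\}^3$ in the relation $c = (1-\alpha-\beta)a+\alpha b+\beta d$, states that any four coplanar $(0,1)$-vectors in convex position form a parallelogram, yielding $v_{i,j}+v_{i',j'} = v_{i,j'}+v_{i',j}$ and therefore all Segre $2\times 2$-minor relations in the toric ideal of $\bar P$. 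Since both this toric ideal and the Segre toric ideal are prime of codimension $(n+1)(m+1)-(n+m+1) = nm$ in $\kk[z_{i,j}]$, and one contains the other, they coincide. Normality and torsionfreeness of $\kk[P]$ then follow from standard facts: Segre products of polynomial rings are normal with class group $\ZZ$, and both properties are preserved under polynomial extension. The equality ${\bf (0,1)}_1 = {\bf Order}_1$ is realised by the poset $(C_n\sqcup C_m)\cup\{\ast_1<\cdots<\ast_{l-1}\}$ with $\ast_1$ above every element of $C_n\sqcup C_m$, whose order polytope has rank $1$ and is combinatorially the required pyramid. The combinatorial-type $\iff$ toric-isomorphism equivalence is then immediate, since both sides are classified by the unordered triple $(\{n,m\},l)$ (or by the dimension alone in the rank-$0$ case).
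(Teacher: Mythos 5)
Your argument is correct, but it takes a genuinely different and far more self-contained route than the paper. The paper disposes of rank $0$ by citing the standard fact that the toric ring of a $(0,1)$-simplex is a polynomial ring, and then handles all of rank $1$ by invoking the author's earlier structure theorem \cite[Theorem~3.7]{M} (quoted as Theorem~\ref{thm:M}), which already asserts the equivalence of ``$\rank P=1$'' with ``$\kk[P]$ is a polynomial extension of a Segre product of two polynomial rings''; the identification with ${\bf Order}_1$ then comes from Proposition~\ref{p_sum}. You instead reprove that structure theorem from scratch: you classify rank-$1$ $(0,1)$-polytopes combinatorially via the Gale-diagram description of $d$-polytopes with $d+2$ vertices applied to the dual (an iterated pyramid over $\Delta^n\times\Delta^m$), peel off pyramid apices using the polynomial-extension argument that the paper records as Proposition~\ref{pyramid}, and then identify the toric ring of the remaining polytope with the Segre product via two ingredients the paper never uses: a ``parallelogram lemma'' asserting that four coplanar $(0,1)$-points in convex position satisfy $v_{i,j}+v_{i',j'}=v_{i,j'}+v_{i',j}$ (which I checked — the case analysis on coordinate patterns does force $\lambda_a=-1$, $\lambda_b=\lambda_d=1$), followed by a comparison of two prime ideals of equal height. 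This buys a proof that does not depend on \cite{M} and isolates exactly where the $(0,1)$-hypothesis is used (a polytope merely combinatorially equivalent to $\Delta^n\times\Delta^m$ would not have the Segre toric ring), at the cost of being much longer than the paper's citation.

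Two small points deserve a sentence in a polished write-up, though neither is a real gap and the paper glosses over the first one as well: the implication ``$\kk[P_1]\cong\kk[P_2]\Rightarrow$ same combinatorial type'' requires knowing that the ring $(\kk[x_1,\ldots,x_{n+1}]\#\kk[y_1,\ldots,y_{m+1}])\otimes_{\kk}\kk[z_1,\ldots,z_{l-1}]$ determines $(\{n,m\},l)$, which follows, e.g., from the Krull dimension, the embedding dimension and the multiplicity (or the canonical class in $\Cl\cong\ZZ$), but is not literally ``immediate''; and the peeling step should note that a combinatorial apex $v$ automatically satisfies $v\notin\aff(V(P)\setminus\{v\})$ because all remaining vertices lie on the supporting hyperplane of the base facet.
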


Next, we discuss case (r2).
Before that, we need to introduce a new family of $(0,1)$-polytopes $P_{n_1,\ldots,n_k}$.

Let $\calB_n$ denote the standard basis of $\RR^n$.
For $n_1,\ldots,n_k \in \ZZ_{>0}$, the $(0,1)$-polytope $P_{n_1,\ldots,n_k}$ is defined as the convex hull of the origin, $\calB_d$, where $d:=n_1+\cdots+n_k$, and the product $\calB_{n_1}\times \cdots \times\calB_{n_k}$.

In Section~\ref{sec:new}, we study several properties of $P_{n_1,\ldots,n_k}$.
Specifically, we describe the facet defining inequalities of $P_{n_1,\ldots,n_k}$ (Proposition~\ref{prop:facet}).
Moreover, we show that $\kk[P_{n_1,\ldots,n_k}]$ possesses a squarefree
initial ideal (Proposition~\ref{prop:gro}).
Furthermore, the following facts hold (Theorem~\ref{thm:new_pro}):
\begin{itemize}
\item 
$P_{n_1,\ldots,n_k}$ has the integer decomposition property, and hence it is normal.
\item $\kk[P_{n_1,\ldots,n_k}]$ is Gorenstein if and only if $n_1=n_2=\cdots =n_k$.
\item $\Cl(\kk[P_{n_1,\ldots,n_k}]) \cong \ZZ^{k-1}$.
\item If $k\ge 3$, then $\kk[P_{n_1,\ldots,n_k}]\notin {\bf Order}_{k-1}$ for any $n_1,\ldots,n_k\in \ZZ_{>0}$.
\end{itemize}

We provide various observations on Problems~\ref{p:class} and \ref{p:comb} in case (r2):
\begin{itemize}
\item The above facts imply that $\rank P_{n_1,n_2,n_3}=2$ and $\kk[P_{n_1,n_2,n_3}]\notin {\bf Order}_2$ for any $n_1,n_2,n_3 \in \ZZ_{>0}$, this is a counterexample to the equality ${\bf (0,1)}_2={\bf Order}_2$ of Problem~\ref{p:class}.
\item We study the combinatorial types of the dual polytopes of $\calO_{\Pi_1},\ldots,\calO_{\Pi_4}$ (these are the order polytopes with rank 2) and $P_{n_1,n_2,n_3}$ by drawing their standard Gale-diagrams (see Figures~\ref{type1},\ref{type2},\ref{type3},\ref{type4} and \ref{type5}).
In particular, Problem~\ref{p:comb} has positive answers for $(0,1)$-polytopes that the standard Gale-diagrams of their dual polytopes coincide with $\Gale_1$ or $\Gale_2$. (Theorems~\ref{thm:type1} and \ref{thm:type2})
\end{itemize}

Finally, we give answers to the above problems in case (r3), although they are negative:

\begin{thm}[Propositions~\ref{prop:nonnormal}, \ref{prop:nontorsion} and \ref{prop:noniso}]
For any positive integer $r\ge 3$, there exist non-normal $(0,1)$-polytope $P$ and non-torsionfree normal $(0,1)$-polytope $P'$ with $\rank P=\rank P'=r$, respectively.
Moreover, there exist two $(0,1)$-polytopes $Q$ and $Q'$ with the same combinatorial type and $\rank Q=\rank Q'=r$ such that their toric rings are not isomorphic to each other.
\end{thm}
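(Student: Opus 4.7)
The three assertions are parallel in structure, and the plan is to handle each by exhibiting an explicit family (parameterized by $r \ge 3$) rather than by a single unified construction. In each case I would identify a minimal example at rank $3$ and then propagate it to higher rank via a controlled operation that preserves the relevant obstruction while increasing $F$ by one and $d$ by zero (or both by one, increasing $F-(d+1)$ by one). Two natural candidates for such an operation are the prism $P \mapsto P \times [0,1]$ and forming a free join with a unimodular segment; either should work because both have predictable effects on facet counts, on the semigroup $\ZZ_{\ge 0}\calA(P)$, and on the divisor class group.

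For the non-normal polytope $P$, the plan is to use edge polytopes. By the Ohsugi--Hibi criterion, the edge polytope $\calP_G$ is normal if and only if $G$ satisfies the odd cycle condition, so it suffices to find a graph $G_3$ failing the OCC whose edge polytope has rank exactly $3$: two vertex-disjoint odd cycles joined by an even path of appropriate length, chosen so that the known facet description of $\calP_G$ (via independent sets / OCC-violating cuts) yields the correct count. For $r>3$, I would iterate the prism $\calP_{G_3}\times [0,1]^{r-3}$ and verify non-normality is preserved: the Segre-type decomposition of the lattice point semigroup of a prism retains any integer point of $\ZZ_{\ge 0}\calA(\calP_{G_3})\setminus \ZZ_{\ge 0}\calA(\calP_{G_3})'$ (using prime notation for the saturation) as a witness to non-normality, and the facet count increases by exactly $2$ per prism while dimension increases by $1$.

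For the non-torsionfree normal $(0,1)$-polytope $P'$, the plan is to find (or construct) a normal $(0,1)$-polytope of rank $3$ whose divisor class group has a nontrivial torsion element, and then to apply a lifting operation that multiplies $\Cl(\kk[\cdot])$ by a free factor $\ZZ$ (thereby not killing torsion) while raising the rank. Concrete candidates are stable set polytopes of certain imperfect graphs or matroid-type polytopes where the matrix of facet normals against vertex coordinates has a Smith normal form with a diagonal entry $>1$; I would compute this Smith normal form directly following the recipe in Section~2. The main obstacle here is locating the base example: unlike compressed, order, and perfect-stable-set polytopes (all torsionfree by \cite{M}, \cite{HM3}), torsion is rare in small $(0,1)$-polytopes, and I expect the search for the smallest such example to be the hardest step.

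For the pair $Q,Q'$ of combinatorially equivalent $(0,1)$-polytopes with non-isomorphic toric rings, the plan is to exploit the fact that combinatorial type sees only the face lattice, whereas the toric ring depends on the affine lattice structure, so two simplices (or near-simplices) that are combinatorially the same but not lattice-equivalent should suffice. A natural base example at rank $3$ is a pair of $d$-dimensional $(0,1)$-simplices with $F=d+4$ facets whose vertex matrices have different absolute determinants; the toric rings are then distinguished by Hilbert series or, more robustly, by the orders of the torsion subgroup (respectively the embedding dimension when extra interior lattice points appear on one side but not the other). To extend to arbitrary $r \ge 3$ I would take iterated prisms or joins of both $Q$ and $Q'$ with the same factor, preserving combinatorial equivalence while keeping the toric ring invariants distinct. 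The subtle point is to ensure that the lifting operation does not accidentally identify the two toric rings; this I would verify by checking that the invariant (determinant, class group torsion, or embedding dimension) is multiplicative/preserved under the chosen operation.
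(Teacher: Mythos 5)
Your general strategy (explicit rank-$3$ base examples propagated upward by taking prisms, which indeed raises the rank by exactly one per factor since $\rank(P\times Q)=\rank P+\rank Q+1$) matches the paper's, but each of your three base constructions has a genuine problem. For the non-normal polytope, you propose an edge polytope of a graph violating the odd cycle condition with rank exactly $3$; no such polytope exists --- the paper itself records that a non-normal edge polytope has rank at least $4$ (via \cite[Proposition~4.4]{HM3}), so your base case at $r=3$ cannot be realized inside the class of edge polytopes. For the non-torsionfree normal polytope, you do not actually produce an example: you correctly identify locating the base case as the hardest step and then leave it open, so this part of the argument is missing rather than wrong. The paper exhibits an explicit $4$-dimensional $(0,1)$-polytope $Q_2$ on six vertices with eight facets, verifies IDP and $\ZZ\calA(Q_2)=\ZZ^5$, and computes the Smith normal form of $\calM_{Q_2}$ to get $\Cl(\kk[Q_2])\cong\ZZ^3\oplus(\ZZ/2\ZZ)^3$.

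The third part of your proposal fails outright. You suggest taking two combinatorially equivalent $(0,1)$-simplices with different vertex-matrix determinants, but a $d$-simplex has $d+1$ facets and hence rank $0$, not rank $\ge 3$ (your phrase ``simplices with $F=d+4$ facets'' is self-contradictory); moreover, as recalled in Section~4.1, the toric ring of \emph{any} $d$-dimensional $(0,1)$-simplex is the polynomial ring in $d+1$ variables, so two such simplices always have isomorphic toric rings and can never furnish the desired pair. The paper's route is much more economical: its two rank-$3$ examples $Q_1$ (non-normal) and $Q_2$ (normal, non-torsionfree) turn out to be combinatorially equivalent to each other, so normality itself separates their toric rings, and the same pair settles all three assertions simultaneously after multiplying by $[0,1]^{r-3}$. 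If you want to salvage your outline, you need concrete rank-$3$ witnesses for all three claims; the distinguishing invariant for the pair must be finer than the face lattice but cannot come from simplices.
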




\subsection{Organization}
In Section~\ref{sec:pre}, we recall the definitions and notation of toric rings of integral polytopes and give an algorithm to compute divisor class groups of toric rings of normal integral polytopes.
We also recall the toric rings of certain $(0,1)$-polytopes, especially the order polytopes, and recall its several properties.
Moreover, we recall the notion of Gale-diagrams and provide a method to obtain the Gale-diagram of the dual polytope of a given polytope.
In Section~\ref{sec:new}, we introduce a new family of $(0,1)$-polytopes and study its properties.
In Section~\ref{sec:ans}, we give complete or partial answers to Problems~\ref{p:normal}, \ref{p:torsion}, \ref{p:class} and \ref{p:comb} in each case; $\rank P=0$ or $1$, $\rank P=2$, and $\rank P\ge 3$.

\medskip

\subsection*{Acknowledgement} 
The author would like to thank Akihiro Higashitani for a lot of his helpful comments and instructive discussions.
The author is partially supported by Grant-in-Aid for JSPS Fellows Grant JP22J20033.


\bigskip

\section{Preliminaries}\label{sec:pre}

\subsection{Toric rings of integral polytopes and their divisor class groups}\label{subsec:toric}
In this subsection, we recall the definitions and notation of polytopes and toric rings. We refer the readers to e.g., \cite{BG2} or \cite{Villa}, for the introduction.

\smallskip

First, we introduce toric rings of integral polytopes.
An \textit{integral polytope} $P \subset \RR^d$ is a polytope whose vertices sit in $\ZZ^d$. 
For an integral polytope $P \subset \RR^d$, we define $\phi_P$ as the morphism of $\kk$-algebras:
$$\phi_P : \kk[x_\vb : \vb\in P\cap \ZZ^d] \to \kk[t_0,t_1^{\pm 1},\ldots,t_d^{\pm 1}], \text{ induced by } \phi_P(x_\vb)=\tb^{\vb}t_0,$$

\noindent where $\tb^{\vb}=t_1^{v_1}\cdots t_d^{v_d}$ for $\vb=(v_1,\ldots,v_d) \in \ZZ^d$. 
Then, the kernel of $\phi_P$, denoted by $I_P$, is called the \textit{toric ideal} of $P$. 
Moreover, the image of $\phi_P$, denoted by $\kk[P]$, is called  the \textit{toric ring} of $P$. 
Note that $\kk[P]\cong \kk[x_\vb : \vb\in P\cap\ZZ^d]/I_P$.

It is well known that the toric ideal $I_P$ is generated by homogeneous binomials. 
The toric ring $\kk[P]$ is a standard graded $\kk$-subalgebra of $\kk[t_0,t_1^{\pm 1},\ldots,t_d^{\pm 1}]$ by setting $\deg(\tb^\vb t_0)=1$ for each $\vb \in P \cap \ZZ^d$. 
The Krull dimension of $\kk[P]$ is equal to the dimension of $P$ plus $1$.

For an integral polytope $P\in \RR^d$, let
$\calA(P)=\{(\vb,1) \in \ZZ^{d+1} : \vb\in P\cap \ZZ^d \}$ and 
let $\ZZ_{\ge 0}\calA(P)=\{a_1\xb_1+\cdots +a_n\xb_n : \xb_1,\ldots,\xb_n\in \calA(P), a_1,\ldots,a_n \in \ZZ_{\ge 0} \}$ (we define $\ZZ\calA(P)$ and $\RR_{\ge 0}\calA(P)$ analogously).

Then, the following facts hold:
\begin{itemize}
\item $P$ is normal (i.e., $\kk[P]$ is normal) if and only if $\ZZ_{\ge 0}\calA(P)=\ZZ\calA(P)\cap \RR_{\ge 0}\calA(P)$ (cf. \cite[Theorem~9.8.13]{Villa}).
\item Suppose that $\ZZ\calA(P)=\ZZ^{d+1}$. Then, $P$ is normal if and only if $P$ has the \textit{integer decomposition property} (\textit{IDP}, for short), that is, for any $n\in \ZZ_{>0}$ and any $\ab \in nP\cap \ZZ^d$, there exist $\ab_1,\ldots,\ab_n \in P\cap \ZZ^d$ such that $\ab=\ab_1+\cdots +\ab_n$ (cf.\cite[Theorem~4.7]{HHO}).
\end{itemize}

\bigskip

We also define the \textit{product} of two polytopes $P\subset \RR^d$ and $Q\subset \RR^e$ as
$$P\times Q=\{(\xb,\yb) : \xb\in P,\yb\in Q\}\subset \RR^{d+e}.$$
We can see that $P\times Q$ is a polytope of dimension $\dim(P) + \dim(Q)$, whose nonempty faces
are the products of nonempty faces (including itself) of $P$ and $Q$.
In particular, the number of facets of $P\times Q$ is equal to $|\Psi(P)|+|\Psi(Q)|$, where $\Psi(P)$ denotes the set of facets of $P$.
Therefore, we have $\rank P\times Q= \rank P + \rank Q +1$.

The toric ring of the product of two integral polytopes corresponds to the ``Segre product" of these toric rings; let $P_1$ and $P_2$ be two integral polytopes, then $\kk[P_1\times P_2]$ is isomorphic to the Segre product of $\kk[P_1]$ and $\kk[P_2]$.
Here, for two standard $\kk$-algebras $R=\bigoplus_{n\ge 0}R_n$ and $S=\bigoplus_{n\ge 0}S_n$, we define their \textit{Segre product} $R\# S$ as the graded $\kk$-algebra:
$$R\#S=(R_0\otimes_{\kk}S_0)\oplus(R_1\otimes_{\kk}S_1)\oplus\cdots \subset R\otimes_{\kk}S.$$



\medskip

Next, we give an algorithm to compute divisor class groups of toric rings of normal integral polytopes.
We use theories in \cite[Section 9.8]{Villa}.

Let $\langle \cdot , \cdot \rangle$ denote the natural inner product of $\RR^d$. For $\ab \in \RR^d$ and $b \in \RR$, 
we denote by $H^{+}(\ab ; b)$ (resp. $H(\ab ; b)$) a closed half-space $\{ \ub \in \RR^d : \langle \ub, \ab\rangle + b\ge 0\}$ 
(resp. an affine hyperplane $\{ \ub \in \RR^d : \langle \ub , \ab \rangle +b= 0\}$). 
In particular, we denote the linear hyperplane $H(\ab;0)$ by $H_\ab$.

For each $F\in \Psi(P)$, there exist a vector $\ab_F \in \QQ^d$ and a rational number $b_F$ 
with the following conditions:
\begin{itemize}
\setlength{\parskip}{0pt} 
\setlength{\itemsep}{3pt}
\item[(d1)] $H(\ab_F ; b_F)$ is a support hyperplane associated with $F$ and $P \subset H^{+}(\ab_F;b_F)$; 
\item[(d2)] $d_F(\vb) \in \ZZ$ for any $\vb \in P \cap \ZZ^d$; 
\item[(d3)] $\sum_{v\in P\cap \ZZ^d}d_F(\vb)\ZZ = \ZZ$,
\end{itemize}
where $d_F(\vb)=\langle \vb, \ab_F \rangle + b_F$ for $\vb \in \ZZ^d$. We can see that $d_F(\vb)$ for $\vb \in P\cap\ZZ^d$ is independent of the choice of $\ab_F$ and $b_F$.

In this situation, $P$ has the irreducible representation:
\begin{align}\label{repre}
\RR_{\ge 0}\calA(P)=\aff(\calA(P))\cap\left(\bigcap_{F\in \Psi(P)}H^+_{\cb_F}\right),
\end{align}
where $\cb_F:=(\ab_F,b_F)\in \QQ^{d+1}$. Given $v \in P \cap \ZZ^d$, we define ${\bf w}_\vb$ belonging to a free abelian group $\calF=\bigoplus_{F \in \Psi(P)} \ZZ \epsilon_F$ 
with its basis $\{\epsilon_F\}_{F \in \Psi(P)}$ as follows: 
$${\bf w}_\vb=\sum_{F \in \Psi(P)}\langle (\vb,1),\cb_F\rangle \epsilon_F=\sum_{F \in \Psi(P)}d_F(\vb) \epsilon_F.$$
Let
\begin{align}\label{groupS}
\calS= \sum_{\vb \in P \cap \ZZ^d} \ZZ{\bf w}_\vb=\left\{\sum_{F\in \Psi(P)}\ang{\vb',\cb_F}\epsilon_F : \vb' \in \ZZ\calA(P)\right\}
\end{align}
and let $\calM_P$ be the matrix whose column vectors consist of ${\bf w}_\vb$ for $\vb \in P \cap \ZZ^d$, that is, $\calM_P=\left(d_F(\vb)\right)_{F \in \Psi(P), \vb \in P\cap \ZZ^d}$. 
Then we can compute the divisor class group of $\kk[P]$ as follows:
\begin{thm}[cf. {\cite[Theorem 9.8.19]{Villa}}]\label{thm:class} Work with the same notation as above and suppose that $P$ is normal. 
Then, we have 
$$\Cl(\kk[P]) \cong \calF / \calS.$$ 
In particular, we have 
$$\Cl(\kk[P]) \cong \ZZ^r \oplus \ZZ/s_1\ZZ \oplus \cdots \oplus \ZZ/s_m\ZZ,$$ 
where $m=\dim P +1$, $r=|\Psi(P)|-m$ and $s_1,\ldots,s_m$ are positive integers appearing in the diagonal of the Smith normal form of $\calM_P$.
\end{thm}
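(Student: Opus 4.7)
The plan is to invoke the standard correspondence between height-one primes of a normal affine semigroup ring and the facets of its cone, then read off the class group as the cokernel of the principal divisor map. Since $P$ is assumed normal, $\kk[P]$ is a Krull domain, so its divisor class group $\Cl(\kk[P])$ is generated by the classes of divisorial ideals corresponding to height-one primes, modulo principal divisors. The first task is to match these generators with the basis $\{\epsilon_F\}_{F\in\Psi(P)}$ of $\calF$.

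First, I would use the irreducible representation \eqref{repre}: the facets of the cone $\RR_{\ge 0}\calA(P)$ are exactly the intersections with the supporting hyperplanes $H_{\cb_F}$ for $F\in\Psi(P)$. For each such facet, the set of monomials $\tb^\vb t_0$ with $\langle(\vb,1),\cb_F\rangle>0$ generates a height-one prime $\fkp_F$ of $\kk[P]$, and conversely every height-one monomial prime of the normal domain $\kk[P]$ arises in this way. This gives a surjection $\calF\twoheadrightarrow\Cl(\kk[P])$ sending $\epsilon_F$ to the divisor class of $\fkp_F$.

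Next, I would compute the kernel. For $\vb\in P\cap\ZZ^d$, the valuation of the monomial $\tb^\vb t_0$ at $\fkp_F$ is exactly $d_F(\vb)=\langle(\vb,1),\cb_F\rangle$ by conditions (d1)--(d3). Hence the principal divisor of $\tb^\vb t_0$ equals $\wb_\vb=\sum_{F\in\Psi(P)}d_F(\vb)\epsilon_F$, and since $\kk[P]$ is generated by such monomials, the subgroup of principal divisors is exactly $\calS$ as defined in \eqref{groupS}. This yields the first isomorphism $\Cl(\kk[P])\cong\calF/\calS$.

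For the second part, I would pass to the matrix $\calM_P$, whose column span inside $\calF\cong\ZZ^{|\Psi(P)|}$ is $\calS$. Smith normal form then expresses $\calF/\calS$ as $\ZZ^{|\Psi(P)|-\rank\calM_P}\oplus\bigoplus_i\ZZ/s_i\ZZ$, with $s_i$ the nontrivial elementary divisors. It remains to verify $\rank\calM_P=\dim P+1=m$: the map $\vb\mapsto\wb_\vb$ factors through $\ZZ\calA(P)\subset\ZZ^{d+1}$, and because the cone $\RR_{\ge 0}\calA(P)$ is full-dimensional in $\aff(\calA(P))$, no nonzero element of $\ZZ\calA(P)$ annihilates all the linear forms $\cb_F$; combined with $\ZZ\calA(P)$ having rank $m$, this gives $\rank\calM_P=m$, so the free rank is $r=|\Psi(P)|-m$ as claimed. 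The main obstacle is really the first step—justifying that every height-one prime of $\kk[P]$ is one of the $\fkp_F$ and that the valuation is literally $d_F(\vb)$—and for this I would simply appeal to the development in \cite[Section~9.8]{Villa}.
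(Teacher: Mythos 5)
The paper gives no proof of this statement at all---it is quoted directly from \cite[Theorem~9.8.19]{Villa}---and your sketch is exactly the standard argument underlying that reference: identify the height-one monomial primes $\fkp_F$ with the facets via the localization (Nagata) sequence, use (d1)--(d3) to see that the valuation of $\tb^{\vb}t_0$ at $\fkp_F$ is $d_F(\vb)$ so that the principal divisors form $\calS$, and then extract the decomposition of $\calF/\calS$ from the Smith normal form of $\calM_P$ together with $\rank\calM_P=\dim P+1$. Your outline is correct, and since you too defer the key divisor-theoretic facts to \cite{Villa}, the two treatments coincide.
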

\noindent The integers $s_1,\ldots,s_m$ are called the \textit{invariant factors} of $\calM_P$.


Let $P \subset \RR^d$ be an integral polytope and let $k_P$ be a maximal nonnegative integer satisfying the following statement:
\begin{itemize}
\item[($*$)] There exist distinct integral points $\vb_1,\ldots,\vb_{k_P} \in P\cap \ZZ^d$ and distinct facets $F_1,\ldots,F_{k_P}$ of $P$ such that 
$v_i \in \bigcap_{l=1}^{i-1} F_l$ for each $1< i \le k_P$ and $d_{F_i}(\vb_i)=1$ for each $1 \le i \le k_P$. \\
\end{itemize}

\begin{thm}[{\cite[Theorem 3.2]{M}}]\label{main1}
Let $P \subset \RR^d$ be a normal integral polytope and let $s_1,\ldots, s_m$ be the invariant factors of $\calM_P$.
Then, $s_1=\cdots=s_{k_P}=1$. In particular, $\Cl(\kk[P])$ is torsionfree if $k_P=\dim P+1$.
\end{thm}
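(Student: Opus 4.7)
The plan is to exhibit an explicit $k_P \times k_P$ submatrix of $\calM_P$ whose determinant is $\pm 1$; this forces the first $k_P$ invariant factors to equal $1$ by the standard characterization of the Smith normal form via greatest common divisors of minors.

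More precisely, let $\vb_1,\ldots,\vb_{k_P}\in P\cap\ZZ^d$ and facets $F_1,\ldots,F_{k_P}$ be as in condition $(*)$. I would consider the submatrix $N=\bigl(d_{F_l}(\vb_i)\bigr)_{1\le l,i\le k_P}$ of $\calM_P$. The condition $\vb_i\in\bigcap_{l=1}^{i-1}F_l$ means $d_{F_l}(\vb_i)=0$ whenever $l<i$, so $N$ is lower triangular; the condition $d_{F_i}(\vb_i)=1$ puts $1$'s on the diagonal. Hence $\det N=1$.

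Next I would invoke the standard fact that, for an integer matrix $M$, if $d_k(M)$ denotes the greatest common divisor of its $k\times k$ minors, then the invariant factors of $M$ are given by $s_k=d_k(M)/d_{k-1}(M)$ (with $d_0=1$), and in particular $d_k(M)=s_1s_2\cdots s_k$. Since we have exhibited a $k_P\times k_P$ minor of $\calM_P$ equal to $\pm 1$, we get $d_{k_P}(\calM_P)=1$, so $s_1s_2\cdots s_{k_P}=1$ and thus $s_1=\cdots=s_{k_P}=1$, as claimed.

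For the final assertion, if $k_P=\dim P+1=m$, then \emph{all} invariant factors vanish (equal $1$), so Theorem~\ref{thm:class} yields $\Cl(\kk[P])\cong\ZZ^r$ with $r=|\Psi(P)|-m$, which is torsionfree. There is no real obstacle in this argument once one has noticed the lower triangular structure of $N$; the only subtlety is confirming that the chosen submatrix entries are exactly the triangular pattern dictated by $(*)$, which is precisely how the combinatorial conditions on the $\vb_i$ and $F_i$ were designed.
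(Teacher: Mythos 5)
Your proof is correct and is essentially the argument intended here: the paper states this result as a citation of \cite[Theorem~3.2]{M}, and condition $(*)$ is designed precisely so that the submatrix $\bigl(d_{F_l}(\vb_i)\bigr)_{1\le l,i\le k_P}$ is unitriangular, after which the gcd-of-minors characterization of the invariant factors gives $s_1=\cdots=s_{k_P}=1$. No gaps.
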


Finally, we explain that normal toric rings whose divisor class groups are torsionfree can be described as the ring of invariants under an action of torus on polynomial rings.
Suppose that $P\subset \RR^d$ is a $d$-dimension normal integral polytope and satisfies $\ZZ\calA(P)=\ZZ^{d+1}$. 
Moreover, we assume that $\Cl(\kk[P])\cong \ZZ^r$.
We fix an isomorphism $\iota : \calF/\calS \to \ZZ^r$ and let
$\beta_F:=\iota(\epsilon_F)$ for each $F\in \Psi(P)$.
In this paper, we call $\beta_F$'s \textit{weights} of $\kk[P]$.
We consider the action of the algebraic torus $G=(\kk^{\times})^r$ on the polynomial ring $S= \kk[x_F : F\in \Psi(P)]$, which is the action induced by $g\cdot x_F=\beta_F(g)x_F$ for $g\in G$. 
Then, we have $R=S^G$ (see e.g., \cite[Theorem 2.1]{BG1}).

\medskip


\subsection{Toric rings of certain $(0,1)$-polytopes}
\label{subsection:01}
In this subsection, we introduce some properties associated with toric rings of certain $(0,1)$-polytopes.

First, we introduce certain polytopes and propositions associated with their toric rings.
\begin{itemize}
\setlength{\parskip}{0pt} 
\setlength{\itemsep}{3pt}
\item A polytopes $P$ is called a \textit{$(0,1)$-polytope} if its all vertices are $(0,1)$-vectors. 
\item A \textit{pyramid} $P \subset \RR^d$ is the convex hull of the union of a polytope $Q \subset \RR^d$ (\textit{basis} of $P$) and a point $v_0 \in \RR^d$ (\textit{apex} of $P$), where $v_0$ does not belong to $\aff(Q)$. Note that the basis of a pyramid $P$ is a facet of $P$. 
\item A polytope $P\subset \RR^d$ is \textit{simple} if each vertex of $P$ is contained in precisely $\dim P$ facets.
\end{itemize}


\begin{prop}[{\cite[Theorem 1]{KW}}]\label{Simple}
A $(0,1)$-polytope $P \subset \RR^d$ is simple if and only if it is equal to a product of $(0,1)$-simplices.
\end{prop}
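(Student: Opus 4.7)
The plan is to prove the two directions separately. For the ``if'' direction, it suffices to note that a simplex is simple (each vertex of a $d$-simplex lies on exactly $d$ of its $d+1$ facets) and that a product $P_1 \times P_2$ of simple polytopes is simple, since the facets of $P_1 \times P_2$ are precisely $F \times P_2$ for $F \in \Psi(P_1)$ together with $P_1 \times G$ for $G \in \Psi(P_2)$, so each vertex $(v_1, v_2)$ lies on exactly $\dim P_1 + \dim P_2 = \dim(P_1 \times P_2)$ facets. Thus any product of $(0,1)$-simplices is a simple $(0,1)$-polytope.

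For the converse I would induct on $d = \dim P$, with the cases $d \le 1$ immediate. For the inductive step, note that coordinate flips $x_i \mapsto 1 - x_i$ preserve both the $(0,1)$-structure and simplicity, so I may assume that $0$ is a vertex of $P$. Then simplicity at $0$ supplies exactly $d$ neighbors $N_1, \ldots, N_d$ of $0$, each of the form $N_i = \chi_{S_i}$ for some nonempty $S_i \subseteq [d]$; moreover, $\{N_1, \ldots, N_d\}$ is a basis of $\RR^d$ because the tangent cone of $P$ at $0$ is full-dimensional.

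For each pair $i \ne j$, simplicity yields a unique $2$-face $F_{ij}$ of $P$ containing the edges $[0, N_i]$ and $[0, N_j]$. Since $F_{ij}$ is itself a $(0,1)$-polygon inside the plane $\RR N_i + \RR N_j$ and is simple at $0$, any vertex of $F_{ij}$ other than $0, N_i, N_j$ must be of the form $\alpha N_i + \beta N_j$ with $\alpha, \beta > 0$; a coordinate-by-coordinate inspection on the index sets $S_i \setminus S_j$, $S_j \setminus S_i$, and $S_i \cap S_j$ forces $\alpha = \beta = 1$ and shows that the resulting point $N_i + N_j$ lies in $\{0,1\}^d$ exactly when $S_i \cap S_j = \emptyset$. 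Hence $F_{ij}$ is either the triangle $\conv(0, N_i, N_j)$ or the parallelogram $\conv(0, N_i, N_j, N_i + N_j)$, and the parallelogram case can arise only when $S_i \cap S_j = \emptyset$.

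Now declare $i \sim j$ when $F_{ij}$ is a triangle (and set $i \sim i$). The goal is to show that $\sim$ is an equivalence relation whose classes $C_1, \ldots, C_k$ encode the intended simplex factors: letting $T_\ell = \bigcup_{i \in C_\ell} S_i$, the sets $T_1, \ldots, T_k$ should partition $[d]$, and $P$ should decompose as $\Delta_1 \times \cdots \times \Delta_k$, where $\Delta_\ell = \conv(0, \{N_i : i \in C_\ell\}) \subset \RR^{T_\ell}$ is a $(0,1)$-simplex. This last step is where the main obstacle lies. For transitivity of $\sim$, I would inspect the unique $3$-face $F_{ijk}$ of $P$ containing $[0, N_i], [0, N_j], [0, N_k]$ when $F_{ij}$ and $F_{jk}$ are both triangles: applying the inductive hypothesis (with a direct base-case check when $d = 3$) realizes $F_{ijk}$ as a $3$-dimensional product of $(0,1)$-simplices, and enumerating the three possibilities $\Delta^3$, $\Delta^2 \times \Delta^1$, $(\Delta^1)^3$ shows that only $\Delta^3$ carries two triangular $2$-faces sharing an edge, which forces $F_{ik}$ to also be a triangle. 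The product decomposition of $P$ itself should then follow by slicing $P$ with the coordinate hyperplanes $\{x_j = 0\}$ and $\{x_j = 1\}$ for $j \in T_\ell$ and appealing to the inductive hypothesis on the resulting lower-dimensional simple $(0,1)$-faces to recognize each $\Delta_\ell$ as a simplex.
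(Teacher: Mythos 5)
The paper does not actually prove this proposition: it is imported wholesale from \cite[Theorem~1]{KW}, so there is no internal proof to compare yours against, and your proposal has to be judged on its own. The ``if'' direction is correct and complete. The local analysis at the vertex $0$ is also sound: every $2$-face of $P$ containing the edges $[0,N_i]$ and $[0,N_j]$ lies in the cone spanned by $N_i=\chi_{S_i}$ and $N_j=\chi_{S_j}$, and the coordinate inspection you describe really does force it to be either the triangle $\conv(0,N_i,N_j)$ or the parallelogram $\conv(0,N_i,N_j,N_i+N_j)$, the latter only when $S_i\cap S_j=\emptyset$; consequently the classes of $\sim$ do induce a partition $[d]=T_1\sqcup\cdots\sqcup T_k$, and the transitivity argument via the $3$-face $F_{ijk}$ works once the classification of $3$-dimensional simple $(0,1)$-polytopes is in hand.

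However, the proposal is not a proof, for two concrete reasons. First, the case $d=3$ is precisely the classification you invoke to prove transitivity, so it cannot be absorbed into the induction; it needs an independent argument that you have not supplied. Second, and more seriously, everything you establish is local at the single vertex $0$: you produce candidate factors $\Delta_\ell=\conv(0,\{N_i : i\in C_\ell\})$, but the assertion $P=\Delta_1\times\cdots\times\Delta_k$ --- equivalently, that the vertex set of $P$ is exactly $\{\sum_\ell v_\ell : v_\ell \text{ a vertex of } \Delta_\ell\}$ --- is the entire content of the theorem, and you explicitly leave it as ``should follow.'' The suggested remedy (slice $P$ by $\{x_j=0\}$ and $\{x_j=1\}$ and apply the inductive hypothesis to the resulting faces) does not obviously close this gap: each slice is a product of simplices by induction, but one must still show that the simplex factors obtained from different slices are coherent with one another and with the partition $T_1,\ldots,T_k$, and reassemble into a global product structure on $P$; that gluing step is of essentially the same difficulty as the original claim. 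Until the $d=3$ base case and this global propagation step are carried out, what you have is a plausible strategy, not a proof.
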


\smallskip

\begin{prop}\label{pyramid}
Let $P \subset \RR^d$ be a $(0,1)$-pyramid with basis $Q$.
Then, $\kk[P]$ is the polynomial extension of $\kk[Q]$.
In particular, if $P$ is normal, then so is $Q$ and we have $$\Cl(\kk[P])\cong \Cl(\kk[Q]).$$
\end{prop}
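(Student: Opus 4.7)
The plan is to establish the result in three stages: identify the lattice points of $P$ explicitly, exhibit $\kk[P]$ as a polynomial extension of $\kk[Q]$ via a grading argument, and then invoke standard transfer results for polynomial extensions.

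First, I would prove that $P \cap \ZZ^d = (Q \cap \ZZ^d) \cup \{v_0\}$. Every lattice point of $P$ lies in $\{0,1\}^d$ because $P$ is a $(0,1)$-polytope, and any $w \in P$ can be written as $w = (1-t)q + tv_0$ with $q \in Q$ and $t \in [0,1]$. For $t \in (0,1)$, a coordinatewise case distinction on $(v_0)_i \in \{0,1\}$ --- using that $q_i \in [0,1]$ and $w_i \in \{0,1\}$ --- forces $q_i = (v_0)_i$ in every coordinate. Hence $q = v_0$, contradicting $v_0 \notin \aff(Q)$. Thus $t \in \{0,1\}$, so $w \in Q$ or $w = v_0$.

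Next, let $\ell(\cdot) = \ang{\cdot, \ab_Q} + b_Q$ be the facet-defining function attached to $Q$, satisfying $\ell|_Q \equiv 0$ and $h := \ell(v_0) > 0$. After clearing denominators, extend $\ell$ to a $\ZZ$-grading on the ambient Laurent ring $\kk[t_0, t_1^{\pm 1}, \ldots, t_d^{\pm 1}]$ by setting $\deg(t_0) = b_Q$ and $\deg(t_i) = (\ab_Q)_i$, so that $\deg(t^v t_0) = \ell(v)$ for every $v \in P \cap \ZZ^d$. The generators of $\kk[Q]$ then sit in degree $0$, while $y := t^{v_0} t_0$ lives in degree $h > 0$. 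Any polynomial relation $\sum_j f_j y^j = 0$ with $f_j \in \kk[Q]$ is a sum of elements of pairwise distinct degrees $jh$, so each must vanish individually. Hence $y$ is transcendental over $\kk[Q]$, and combined with the generating statement from Step 1, we obtain $\kk[P] \cong \kk[Q][y]$.

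Finally, both remaining conclusions are classical properties of the polynomial extension $R \subset R[y]$: normality transfers in both directions (via faithfully flat descent on one side and the standard integral closure argument on the other), and $\Cl(R) \cong \Cl(R[y])$ for any normal Noetherian domain $R$ (see, e.g., Fossum's book on divisor class groups). Applied to $R = \kk[Q]$, these yield the normality of $\kk[Q]$ and the isomorphism $\Cl(\kk[P]) \cong \Cl(\kk[Q])$.

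The main obstacle I anticipate is Step 1: ruling out interior lattice points of the pyramid. It is precisely the $(0,1)$-hypothesis, rather than mere integrality, that makes this work --- without it, a pyramid can certainly contain additional lattice points beyond the base and apex, and the extension from $\kk[Q]$ to $\kk[P]$ could require more than a single transcendental element.
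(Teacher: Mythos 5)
The paper states Proposition~\ref{pyramid} without proof, treating it as a routine fact, so there is no in-paper argument to compare against; your write-up supplies a complete and correct justification. Step 1 is sound: the coordinatewise analysis of $w=(1-t)q+tv_0$ with $w\in\{0,1\}^d$, $q\in[0,1]^d$, $(v_0)_i\in\{0,1\}$ does force $q=v_0$ when $t\in(0,1)$, contradicting $v_0\notin\aff(Q)$, and you correctly flag that this is where the $(0,1)$-hypothesis is essential. Step 2 is the standard semigroup-theoretic argument in a slightly different guise: grading by an integral affine functional $\ell$ vanishing on $\aff(Q)$ with $\ell(v_0)>0$ is equivalent to observing that $(v_0,1)\notin\mathrm{span}_{\QQ}\,\calA(Q)$, so that $\ZZ_{\ge 0}\calA(P)=\ZZ_{\ge 0}\calA(Q)\oplus\ZZ_{\ge 0}(v_0,1)$; either formulation gives the transcendence of $y=\tb^{v_0}t_0$ over $\kk[Q]$, and hence $\kk[P]\cong\kk[Q][y]$. (One pedantic remark: if $P$ is not full-dimensional in $\RR^d$ the phrase ``facet-defining function'' should be read as ``any integral affine functional vanishing on $\aff(Q)$ and positive at $v_0$,'' which exists since $v_0\notin\aff(Q)$ and everything is rational; this does not affect the argument.) Step 3 correctly invokes the classical facts that normality passes between $R$ and $R[y]$ and that $\Cl(R)\cong\Cl(R[y])$ for a Krull domain $R$, which is exactly what the ``in particular'' clause needs.
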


\bigskip

Next, we introduce the certain class of $(0,1)$-polytopes, which are arising from posets.
These are called order polytopes (\cite{S86}) and play an important role in this paper. 

Let $\Pi$ be a finite poset equipped with a partial order $\preceq$. 
For a subset $I \subset \Pi$, we say that $I$ is a \textit{poset ideal} of $\Pi$ if $p \in I$ and $q \preceq p$ then $q \in I$. 
Note that $\emptyset$ is regarded as a poset ideal.
Let $\widehat{\Pi}=\Pi \sqcup \{\hat{0},\hat{1}\}$, where $\hat{0}$ (resp. $\hat{1}$) is a new minimal (resp. maximal) element not belonging to $\Pi$. 

For a poset $\Pi=\{p_1,\ldots,p_d\}$, let \begin{align*}
\calO_\Pi=\{(x_1,\ldots,x_d) \in \RR^d : \; x_i \geq x_j \text{ if } p_i \preceq p_j \text{ in }\Pi, \;\; 
0 \leq x_i \leq 1 \text{ for }i=1,\ldots,d \}.
\end{align*}
A convex polytope $\calO_\Pi$ is called the \textit{order polytope} of $\Pi$. 
It is known that $\calO_\Pi$ is a $(0,1)$-polytope and has IDP.
According to \cite{S86}, the vertices of $\calO_\Pi$ one-to-one correspond to the poset ideals of $\Pi$.
In fact, a $(0,1)$-vector $(a_1,\ldots,a_d)$ is a vertex of $\calO_\Pi$ if and only if $\{p_i\in \Pi : a_i=1\}$ is a poset ideal.
Moreover, the facets of $\calO_{\Pi}$ one-to-one correspond to the edges of the Hasse diagram of $\widehat{\Pi}$.
In fact, for $p_i, p_j \in \Pi$ with $p_i \prec p_j$, $\{p_i,p_j\}$ is an edge of the Hasse diagram of $\widehat{\Pi}$ if and only if the inequality $x_i\ge x_j$ defines a facet of $\calO_{\Pi}$,
where we let $x_i=1$ (resp. $x_j=0$) if $p_i=\hat{0}$ (resp. $p_j=\hat{1}$).

The toric ring $\kk[\calO_\Pi]$ is called the \textit{Hibi ring} of $\Pi$, which was originally introduced in \cite{H87}.
We denote the Hibi ring of $\Pi$ by $\kk[\Pi]$ (instead of $\kk[\calO_\Pi]$ for short).

Let $P$ and $Q$ be two posets with $P\cap Q=\emptyset$. 
The \textit{disjoint union} of $P$ and $Q$ is the poset $P+Q$ on $P\cup Q$ such that $x\preceq y$ in $P+Q$ if (a) $x,y\in P$ and $x\preceq y$ in $P$, or (b) $x,y\in Q$ and $x\preceq y$ in $Q$.
The \textit{ordinal sum} of $P$ and $Q$ is the poset $P\oplus Q$ on $P\cup Q \cup \{z\}$ such that $x \preceq y$ in $P\oplus Q$ if
(a) $x,y\in P$ and $x\preceq y$ in $P$, (b) $x,y\in Q$ and $x\preceq y$ in $Q$, (c) $x\in P$ and $y=z$, or (d) $x=z$ and $y\in Q$,
where $z$ is a new element which is not contained in $P\cup Q$.
By observing poset ideals of $P+Q$ and $P\oplus Q$, the following proposition holds:

\begin{prop}\label{p_sum}
Let $P$ and $Q$ be two posets with $P\cap Q=\emptyset$.
\begin{itemize}
\item[(i)] One has $\kk[P+Q] \cong \kk[P]\#\kk[Q]$.
\item[(ii)] One has $\kk[P\oplus Q]\cong \kk[Q\oplus P] \cong \kk[P]\otimes_{\kk}\kk[Q]$.
\end{itemize}
\end{prop}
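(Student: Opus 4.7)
The plan is to deduce both isomorphisms from a careful examination of how poset ideals, and more generally lattice points of dilations of the order polytopes, decompose.

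For (i), the key observation is that any element of $P$ is incomparable with any element of $Q$ in $P+Q$, so a subset $I \subset P \cup Q$ is a poset ideal of $P + Q$ if and only if $I \cap P$ is a poset ideal of $P$ and $I \cap Q$ is a poset ideal of $Q$. Taking characteristic vectors (which are precisely the vertices of the respective order polytopes) shows that $\calO_{P+Q} = \calO_P \times \calO_Q$; this equality can also be read off directly from the facet description of $\calO_{P+Q}$, whose defining inequalities split according to whether they involve only $P$-coordinates or only $Q$-coordinates. Applying the general fact recalled in Section~\ref{subsec:toric} that the toric ring of a product of integral polytopes is the Segre product of the individual toric rings yields $\kk[P+Q] \cong \kk[P] \# \kk[Q]$.

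For (ii), I would argue at the level of graded components. The poset ideals of $P \oplus Q$ split into two families: the poset ideals $I$ of $P$, and the subsets of the form $P \cup \{z\} \cup J$ for $J$ a poset ideal of $Q$. More strongly, for every integer $n \ge 0$ and every lattice point $(x_P, x_z, x_Q)$ of $n \calO_{P \oplus Q}$, the order constraints force $x_p \ge x_z \ge x_q$ for all $p \in P$, $q \in Q$. Setting $t := x_z$, $y := x_P - t \mathbf{1}_P$ (where $\mathbf{1}_P$ denotes the all-ones vector indexed by $P$), and $w := x_Q$, one checks that $y \in (n-t) \calO_P \cap \ZZ^P$ and $w \in t \calO_Q \cap \ZZ^Q$, and that the map $(x_P, x_z, x_Q) \mapsto (t, y, w)$ is a bijection
$$\bigl(n \calO_{P \oplus Q}\bigr) \cap \ZZ^{P \cup \{z\} \cup Q} \;\longleftrightarrow\; \bigsqcup_{t=0}^{n} \bigl((n-t) \calO_P \cap \ZZ^P\bigr) \times \bigl(t \calO_Q \cap \ZZ^Q\bigr)$$
that respects the additive semigroup structure: the sum of two lattice points on the left corresponds to componentwise addition in the pair $(y, w)$ on the right (note that the $t$-coordinates simply add). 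Translating to toric rings, this yields
$$\kk[P \oplus Q]_n \;\cong\; \bigoplus_{t=0}^{n} \kk[P]_{n-t} \otimes_\kk \kk[Q]_t \;=\; (\kk[P] \otimes_\kk \kk[Q])_n$$
as graded $\kk$-algebras. The isomorphism $\kk[Q \oplus P] \cong \kk[P] \otimes_\kk \kk[Q]$ then follows either from commutativity of the tensor product or by running the same argument with the roles of $P$ and $Q$ swapped.

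The only mildly delicate step is verifying that the decomposition in (ii) is compatible with multiplication, but this is an immediate check once the bijection is in place. I do not foresee significant obstacles beyond bookkeeping.
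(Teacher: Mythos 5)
Your argument is correct and amounts to the fleshed-out version of what the paper offers, which is only the one-line remark that the proposition follows ``by observing poset ideals of $P+Q$ and $P\oplus Q$'': for (i) you identify $\calO_{P+Q}=\calO_P\times\calO_Q$ and invoke the Segre-product fact from Section~\ref{subsec:toric}, and for (ii) you give the natural graded-semigroup decomposition. The only step worth making explicit in (ii) is that identifying the degree-$n$ component of $\kk[P\oplus Q]$ (resp.\ of $\kk[P]$, $\kk[Q]$) with \emph{all} lattice points of the $n$-th dilation uses the integer decomposition property of order polytopes, which the paper records in Section~\ref{subsection:01}, so this is harmless.
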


We provide the description of the divisor class groups of Hibi rings, which is given in \cite{HHN}.
Let $n$ be the number of edges of the Hasse diagram of $\widehat{\Pi}$. 
Then, it is proved in \cite{HHN} that \begin{align}\label{eq:Hibi}\Cl(\kk[\Pi]) \cong \ZZ^{n-d-1}.\end{align} 
In particular, the rank of $\calO_{\Pi}$ is equal to $n-(d+1)$.

The characterization of posets $\Pi$ with $\rank \calO_\Pi \le 2$ has been investigated in \cite{N,HM3}:

\begin{thm}[{\cite[Lemma~3.2]{N} and \cite[Proposition~4.1]{HM3}}]
Let $\Pi$ be a poset. Then,
\begin{itemize}
\item[(i)] $\rank \calO_\Pi=0$ if and only if (the Hasse diagram of) $\Pi$ is a chain. 
\end{itemize}
Moreover, assume that $\kk[\Pi]$ is not a polynomial extension of a Hibi ring. Then,
\begin{itemize}
\item[(ii)] $\rank \calO_\Pi=1$ if and only $\Pi$ is the disjoint union of two chains.
\item[(iii)] $\rank \calO_\Pi=2$ if and only if $\Pi$ is one of the posets $\Pi_1,\Pi_2,\Pi_3$ or $\Pi_4$ (see Figures~\ref{poset1}, \ref{poset2}, \ref{poset3} and \ref{poset4}).
\end{itemize}
\end{thm}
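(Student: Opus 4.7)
The plan is to turn the rank condition into graph-theoretic conditions on the Hasse diagram $H$ of $\widehat{\Pi}$ and then enumerate. By \eqref{eq:Hibi}, $\rank \calO_\Pi = n - (d+1)$, where $n$ is the number of edges of $H$ and $d = |\Pi|$. The key bookkeeping is the following: each $p \in \Pi$ satisfies $\deg_H(p) \ge 2$ (since $p$ covers some element and is covered by some element in $\widehat{\Pi}$), while $\deg_H(\hat{0})$ equals the number of minimal elements of $\Pi$ and $\deg_H(\hat{1})$ equals the number of maximal elements. Moreover, by Proposition~\ref{p_sum}(ii), $\Pi$ has a unique maximum (resp.\ minimum) exactly when $\Pi = \Pi' \oplus \{z\}$ (resp.\ $\{z\} \oplus \Pi'$), in which case $\kk[\Pi] \cong \kk[\Pi'] \otimes \kk[z]$ is a polynomial extension of a Hibi ring; iterating, the assumption in (ii) and (iii) translates to $\deg_H(\hat{0}), \deg_H(\hat{1}) \ge 2$.

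For (i), $\rank \calO_\Pi = 0$ says $H$ is a tree on $d+2$ vertices, with degree sum $2(d+1)$. Since elements of $\Pi$ have degree at least $2$ and $\hat{0}, \hat{1}$ have degree at least $1$, equality forces $\hat{0}$ and $\hat{1}$ to be the only leaves and every element of $\Pi$ to have degree exactly $2$. A tree with only two leaves is a path, so $\Pi$ is a chain; the converse is immediate.

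For (ii), under the extra assumption every degree of $H$ is at least $2$, while the degree sum equals $2(d+2)$, so every degree is exactly $2$. Hence $H$ is $2$-regular and connected (connectedness being automatic because $\hat{0}$ lies below every vertex of $\widehat{\Pi}$), so $H$ is a single cycle passing through $\hat{0}$ and $\hat{1}$. The two arcs of the cycle between $\hat{0}$ and $\hat{1}$ give two disjoint saturated chains whose union is $\Pi$, showing $\Pi = C_1 + C_2$; the converse is a direct edge count.

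For (iii) the same strategy applies, now with a case split. Setting $\alpha = \deg_H(\hat{0}) - 2$, $\beta = \deg_H(\hat{1}) - 2$, and $\gamma = \sum_{p \in \Pi}(\deg_H(p) - 2)$, all nonnegative, the degree sum yields $\alpha + \beta + \gamma = 2$. Up to the poset duality exchanging $\hat{0}$ and $\hat{1}$, the distinct profiles are $(2,0,0)$, $(1,1,0)$, $(1,0,1)$ and $(0,0,2)$, the last of which further splits according to whether the excess lives on one vertex of degree $4$ or on two vertices of degree $3$. Each profile constrains the topological shape of $H$ (which has cycle rank $2$) to a small list of possibilities, and realizing each shape as a Hasse diagram forces the covering orientations via the positions of $\hat{0}$ and $\hat{1}$; after eliminating duplicates this produces exactly the four posets $\Pi_1,\ldots,\Pi_4$. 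The hard part is this last case-by-case verification: one must ensure that no topological type of $H$ is overlooked and that distinct types correspond to genuinely non-isomorphic posets, which is the reason the classification is ultimately delivered through the pictorial list rather than a single closed-form description.
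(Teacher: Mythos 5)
The paper contains no proof of this statement to compare against: it is quoted directly from \cite[Lemma~3.2]{N} and \cite[Proposition~4.1]{HM3}. Judged on its own terms, your reduction is the natural one and parts (i) and (ii) are essentially complete: $\rank\calO_\Pi=n-(d+1)$ by \eqref{eq:Hibi}, every $p\in\Pi$ has degree at least $2$ in the Hasse diagram $H$ of $\widehat{\Pi}$, and Proposition~\ref{p_sum}(ii) turns the hypothesis ``not a polynomial extension of a Hibi ring'' into $\deg_H(\hat{0}),\deg_H(\hat{1})\ge 2$. In (ii) you should still record two one-line checks: the edge $\{\hat{0},\hat{1}\}$ cannot occur once $\Pi\neq\emptyset$ (so both arcs of the cycle are nonempty), and elements on different arcs are incomparable because every path joining them in $H$ passes through $\hat{0}$ or $\hat{1}$ and hence is not monotone; without the latter you have only shown $\Pi$ is a union of two chains, not their \emph{disjoint} union as posets.

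The genuine gap is in (iii), and it sits exactly where you flag it. The identity $\alpha+\beta+\gamma=2$ and the list of degree profiles is only the setup; the content of the claim is the enumeration, and you assert its outcome instead of performing it. Concretely, one must (a) show the profile $(2,0,0)$ is impossible: with all other degrees equal to $2$, the four minimal elements start upward chains that can neither merge nor branch, forcing $\deg_H(\hat{1})\ge 4$; (b) in the profile $(0,0,2)$ concentrated on one vertex $q$ of degree $4$, rule out the splits with three up-neighbours or three down-neighbours by the same path-counting, leaving the $(2,2)$ split, which is $\Pi_2$; (c) in the profile $(0,0,2)$ spread over two degree-$3$ vertices, rule out the configurations where both branch upward or both downward, and then enumerate the admissible reduced multigraphs on $\{\hat{0},\hat{1},q_1,q_2\}$ with degree sequence $(2,2,3,3)$ together with their Hasse-compatible orientations (here one must also discard configurations whose suppressed edges would be transitively implied covers), arriving at $\Pi_4$; (d) in the profile $(1,0,1)$ pin down the position and orientation of the unique degree-$3$ element of $\Pi$, arriving at $\Pi_3$. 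Each step is elementary, but without them the sentence ``this produces exactly the four posets'' is a restatement of the theorem, not a proof. One further point you should make explicit: your reduction ``up to the poset duality exchanging $\hat{0}$ and $\hat{1}$'' must be reconciled with the literal list $\Pi_1,\ldots,\Pi_4$, e.g.\ the opposite of $\Pi_3$ has two maximal and three minimal elements and so is not literally of the form in Figure~\ref{poset3}; this is harmless because $\calO_{\Pi^{op}}$ is the image of $\calO_\Pi$ under $\xb\mapsto{\bf 1}-\xb$, whence $\kk[\Pi^{op}]\cong\kk[\Pi]$, but it needs to be said.
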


\begin{figure}[h]
{\scalebox{0.8}{
\begin{minipage}{0.45\columnwidth}
\centering
{\scalebox{0.9}{
\begin{tikzpicture}[line width=0.05cm]

\coordinate (N11) at (0,0); \coordinate (N12) at (0,1);
\coordinate (N13) at (0,2); \coordinate (N15) at (0,4);
\coordinate (N16) at (0,5); \coordinate (N21) at (2,0);
\coordinate (N22) at (2,1); \coordinate (N23) at (2,2);
\coordinate (N25) at (2,4); \coordinate (N26) at (2,5);
\coordinate (N31) at (4,0); \coordinate (N32) at (4,1);
\coordinate (N33) at (4,2); \coordinate (N35) at (4,4);
\coordinate (N36) at (4,5);
\coordinate (N0) at (2,-1); \coordinate (N1) at (2,6);

\draw  (N11)--(N12); \draw  (N12)--(N13);
\draw  (N13)--(0,2.5); \draw  (0,3.5)--(N15);
\draw  (N15)--(N16); \draw[dotted]  (0,2.8)--(0,3.2);
\draw  (N21)--(N22); \draw  (N22)--(N23);
\draw  (N23)--(2,2.5); \draw  (2,3.5)--(N25);
\draw  (N25)--(N26); \draw[dotted]  (2,2.8)--(2,3.2);
\draw  (N31)--(N32); \draw  (N32)--(N33);
\draw  (N33)--(4,2.5); \draw  (4,3.5)--(N35);
\draw  (N35)--(N36); \draw[dotted]  (4,2.8)--(4,3.2);
\draw (N0)--(N11); \draw (N0)--(N21); \draw (N0)--(N31);
\draw (N1)--(N16); \draw (N1)--(N26); \draw (N1)--(N36);

\draw [line width=0.05cm, fill=white] (N11) circle [radius=0.15]; \draw [line width=0.05cm, fill=white] (N12) circle [radius=0.15]; 
\draw [line width=0.05cm, fill=white] (N13) circle [radius=0.15]; \draw [line width=0.05cm, fill=white] (N15) circle [radius=0.15]; 
\draw [line width=0.05cm, fill=white] (N16) circle [radius=0.15]; \draw [line width=0.05cm, fill=white] (N21) circle [radius=0.15];
\draw [line width=0.05cm, fill=white] (N22) circle [radius=0.15]; \draw [line width=0.05cm, fill=white] (N23) circle [radius=0.15]; 
\draw [line width=0.05cm, fill=white] (N25) circle [radius=0.15]; \draw [line width=0.05cm, fill=white] (N26) circle [radius=0.15];
\draw [line width=0.05cm, fill=white] (N31) circle [radius=0.15]; \draw [line width=0.05cm, fill=white] (N32) circle [radius=0.15]; 
\draw [line width=0.05cm, fill=white] (N33) circle [radius=0.15]; \draw [line width=0.05cm, fill=white] (N35) circle [radius=0.15]; 
\draw [line width=0.05cm, fill=white] (N36) circle [radius=0.15];
\draw [line width=0.05cm, fill=white] (N0) circle [radius=0.15]; 
\draw [line width=0.05cm, fill=white] (N1) circle [radius=0.15]; 
\node at (5.,0) {$\;$}; 
\draw [line width=0.015cm, decorate,decoration={brace,amplitude=10pt}](-0.25,-0.7) -- (-0.25,5.7) node[midway,xshift=-0.8cm] {\Large $n_1$}; 
\draw [line width=0.015cm, decorate,decoration={brace,amplitude=10pt}](1.75,-0.7) -- (1.75,5.7) node[black,midway,xshift=-0.8cm] {\Large $n_2$};
\draw [line width=0.015cm, decorate,decoration={brace,amplitude=10pt,mirror}](4.25,-0.7) -- (4.25,5.7) node[black,midway,xshift=0.8cm] {\Large $n_3$};

\end{tikzpicture}
} }
\caption{The poset $\widehat{\Pi}_1$}
\label{poset1}
\end{minipage} \; \; 
\begin{minipage}{0.45\columnwidth}
\centering
{\scalebox{0.9}{
\begin{tikzpicture}[line width=0.05cm]

\coordinate (N15) at (0,3);\coordinate (N17) at (0,5);
\coordinate (N25) at (2,3);\coordinate (N27) at (2,5);
\coordinate (Nt1) at (0,0);\coordinate (Nt3) at (0,2);
\coordinate (Nt5) at (2,0);\coordinate (Nt7) at (2,2);
\coordinate (Ntt1) at (1,2.5);
\coordinate (N0) at (1,-0.5); \coordinate (N1) at (1,5.5);

\draw (Nt1)--(0,0.5);\draw[dotted] (0,0.8)--(0,1.2); 
\draw (0,1.5)--(Nt3);\draw (Nt5)--(2,0.5); 
\draw[dotted] (2,0.8)--(2,1.2);\draw (2,1.5)--(Nt7); 
\draw (Nt3)--(Ntt1);\draw (Nt7)--(Ntt1); 
\draw (Ntt1)--(N15);\draw (Ntt1)--(N25); 
\draw (N15)--(0,3.5);\draw[dotted] (0,3.8)--(0,4.2); 
\draw (0,4.5)--(N17);\draw (N25)--(2,3.5); 
\draw[dotted] (2,3.8)--(2,4.2);\draw (2,4.5)--(N27);
\draw (N0)--(Nt1); \draw (N0)--(Nt5);
\draw (N1)--(N17); \draw (N1)--(N27);

\draw [line width=0.05cm, fill=white] (Ntt1) circle [radius=0.15]; \draw [line width=0.05cm, fill=white] (N15) circle [radius=0.15];
\draw [line width=0.05cm, fill=white] (N17) circle [radius=0.15]; \draw [line width=0.05cm, fill=white] (N25) circle [radius=0.15];
\draw [line width=0.05cm, fill=white] (N27) circle [radius=0.15]; \draw [line width=0.05cm, fill=white] (Nt1) circle [radius=0.15];
\draw [line width=0.05cm, fill=white] (Nt3) circle [radius=0.15]; \draw [line width=0.05cm, fill=white] (Nt5) circle [radius=0.15]; 
\draw [line width=0.05cm, fill=white] (Ntt1) circle [radius=0.15]; \draw [line width=0.05cm, fill=white] (Nt7) circle [radius=0.15];
\draw [line width=0.05cm, fill=white] (N0) circle [radius=0.15]; 
\draw [line width=0.05cm, fill=white] (N1) circle [radius=0.15];
\node at (3.5,0) {$\;$}; 
\draw [line width=0.015cm, decorate,decoration={brace,amplitude=10pt}](-0.25,2.6) -- (-0.25,5.4) node[midway,xshift=-0.8cm] {\Large $n_1$}; 
\draw [line width=0.015cm, decorate,decoration={brace,amplitude=10pt,mirror}](2.25,2.6) -- (2.25,5.4) node[midway,xshift=0.8cm] {\Large $n_2$}; 
\draw [line width=0.015cm, decorate,decoration={brace,amplitude=10pt}](-0.25,-0.4) -- (-0.25,2.4) node[black,midway,xshift=-0.8cm] {\Large $m_1$}; 
\draw [line width=0.015cm, decorate,decoration={brace,amplitude=10pt,mirror}](2.25,-0.4) -- (2.25,2.4) node[black,midway,xshift=0.8cm] {\Large $m_2$};

\end{tikzpicture} 
}}
\caption{The poset $\widehat{\Pi}_2$}
\label{poset2}
\end{minipage}
}}
{\scalebox{0.8}{
\begin{minipage}{0.46\columnwidth}
\centering
{\scalebox{0.9}{
\begin{tikzpicture}[line width=0.05cm]

\coordinate (N14) at (0,2.5);
\coordinate (N15) at (0,3.5);
\coordinate (N16) at (0,4.5);
\coordinate (N17) at (0,5.5);
\coordinate (N24) at (2,2.5);
\coordinate (N25) at (2,3.5);
\coordinate (N27) at (2,5.5);
\coordinate (N31) at (4,0);
\coordinate (N32) at (4,1);
\coordinate (N33) at (4,2);
\coordinate (N35) at (4,3);
\coordinate (N36) at (4,4.5); 
\coordinate (N37) at (4,5.5);
\coordinate (Nt1) at (1,0);
\coordinate (Nt3) at (1,2); 
\coordinate (N0) at (2,-1); \coordinate (N1) at (2,6.5);

\draw  (Nt3)--(N14);
\draw  (N14)--(N15);
\draw  (N15)--(0,4);
\draw[dotted]  (0,4.3)--(0,4.7);
\draw  (0,5)--(N17);
\draw  (Nt3)--(N24);
\draw  (N24)--(N25);
\draw  (N25)--(2,4);
\draw[dotted]  (2,4.3)--(2,4.7);
\draw  (2,5)--(N27); 
\draw  (Nt1)--(1,0.5);
\draw[dotted]  (1,0.8)--(1,1.3);
\draw  (1,1.5)--(Nt3); 
\draw  (N31)--(N32); 
\draw  (N32)--(4,1.9); 
\draw  (4,3.6)--(N36); 
\draw  (N36)--(N37);
\draw[dotted]  (4,2.2)--(4,3.3);
\draw (N0)--(Nt1); \draw (N0)--(N31);
\draw (N1)--(N17); \draw (N1)--(N27); \draw (N1)--(N37);

\draw [line width=0.05cm, fill=white] (Nt1) circle [radius=0.15]; \draw [line width=0.05cm, fill=white] (Nt3) circle [radius=0.15];
\draw [line width=0.05cm, fill=white] (N14) circle [radius=0.15]; \draw [line width=0.05cm, fill=white] (N15) circle [radius=0.15];
\draw [line width=0.05cm, fill=white] (N17) circle [radius=0.15]; \draw [line width=0.05cm, fill=white] (N24) circle [radius=0.15]; 
\draw [line width=0.05cm, fill=white] (N25) circle [radius=0.15]; \draw [line width=0.05cm, fill=white] (N27) circle [radius=0.15];
\draw [line width=0.05cm, fill=white] (N31) circle [radius=0.15]; \draw [line width=0.05cm, fill=white] (N32) circle [radius=0.15]; 
\draw [line width=0.05cm, fill=white] (N36) circle [radius=0.15]; \draw [line width=0.05cm, fill=white] (N37) circle [radius=0.15];
\draw [line width=0.05cm, fill=white] (N0) circle [radius=0.15]; 
\draw [line width=0.05cm, fill=white] (N1) circle [radius=0.15];
\node at (6,0) {$\;$}; 
\draw [line width=0.015cm, decorate,decoration={brace,amplitude=10pt}](-0.25,2.1) -- (-0.25,6.2) node[midway,xshift=-0.8cm] {\Large $n_1$}; 
\draw [line width=0.015cm, decorate,decoration={brace,amplitude=10pt,mirror}](2.25,2.1) -- (2.25,6.2) node[midway,xshift=0.8cm] {\Large $n_2$};
\draw [line width=0.015cm, decorate,decoration={brace,amplitude=10pt,mirror}](4.25,-0.7) -- (4.25,6.2) node[black,midway,xshift=0.8cm] {\Large $n_3$}; 
\draw [line width=0.015cm, decorate,decoration={brace,amplitude=10pt}](0.75,-0.7) -- (0.75,1.9) node[black,midway,xshift=-0.8cm] {\Large $m_1$}; 

\end{tikzpicture} }}
\caption{The poset $\widehat{\Pi}_3$}
\label{poset3}
\end{minipage}
\begin{minipage}{0.49\columnwidth}
\centering
{\scalebox{0.9}{
\begin{tikzpicture}[line width=0.05cm]
\coordinate (N11) at (0,0); \coordinate (N13) at (0,2);
\coordinate (N14) at (0,3); \coordinate (N16) at (0,5);
\coordinate (N17) at (0,6); \coordinate (N21) at (4,0);
\coordinate (N22) at (4,1); \coordinate (N24) at (4,3);
\coordinate (N25) at (4,4); \coordinate (N27) at (4,6); 
\coordinate (Ntt1) at (1,2.5); \coordinate (Ntt2) at (3,3.5);
\coordinate (N0) at (2,-1); \coordinate (N1) at (2,7);

\draw  (N11)--(0,0.5); \draw  (0,1.5)--(N13);
\draw  (N13)--(N14); \draw  (N14)--(0,3.5);
\draw  (0,4.5)--(N16); \draw  (N16)--(N17);
\draw[dotted]  (0,0.8)--(0,1.2); \draw[dotted]  (0,3.8)--(0,4.2);
\draw  (N21)--(N22); \draw  (N22)--(4,1.5);
\draw  (4,2.5)--(N24); \draw  (N24)--(N25);
\draw  (N25)--(4,4.5); \draw  (4,5.5)--(N27);
\draw[dotted]  (4,1.8)--(4,2.2); \draw[dotted]  (4,4.8)--(4,5.2);
\draw  (N13)--(Ntt1); \draw  (Ntt2)--(N25);
\draw  (Ntt1)--(1.5,2.75); \draw  (2.5,3.25)--(Ntt2);
\draw[dotted]  (1.8,2.9)--(2.2,3.1);
\draw (N0)--(N11); \draw (N0)--(N21);
\draw (N1)--(N17); \draw (N1)--(N27);

\draw [line width=0.05cm, fill=white] (N11) circle [radius=0.15]; \draw [line width=0.05cm, fill=white] (N13) circle [radius=0.15];
\draw [line width=0.05cm, fill=white] (N14) circle [radius=0.15]; \draw [line width=0.05cm, fill=white] (N16) circle [radius=0.15];
\draw [line width=0.05cm, fill=white] (N17) circle [radius=0.15]; \draw [line width=0.05cm, fill=white] (N21) circle [radius=0.15];
\draw [line width=0.05cm, fill=white] (N22) circle [radius=0.15]; \draw [line width=0.05cm, fill=white] (N24) circle [radius=0.15];
\draw [line width=0.05cm, fill=white] (N25) circle [radius=0.15]; \draw [line width=0.05cm, fill=white] (N27) circle [radius=0.15];
\draw [line width=0.05cm, fill=white] (Ntt1) circle [radius=0.15]; \draw [line width=0.05cm, fill=white] (Ntt2) circle [radius=0.15];
\draw [line width=0.05cm, fill=white] (N0) circle [radius=0.15]; 
\draw [line width=0.05cm, fill=white] (N1) circle [radius=0.15];
\node at (4.5,0) {$\;$}; 
\draw [line width=0.015cm, decorate,decoration={brace,amplitude=10pt,mirror}](0.27,1.9) -- (3.8,3.7) 
node[midway,xshift=0.4cm,yshift=-0.6cm] {\Large $m_1$}; 
\draw [line width=0.015cm, decorate,decoration={brace,amplitude=10pt}](-0.25,-0.7) -- (-0.25,1.9) node[black,midway,xshift=-0.8cm] {\Large $l_1$};
\draw [line width=0.015cm, decorate,decoration={brace,amplitude=10pt}](-0.25,2.1) -- (-0.25,6.7) node[black,midway,xshift=-0.8cm] {\Large $n_1$};
\draw [line width=0.015cm, decorate,decoration={brace,amplitude=10pt,mirror}](4.25,-0.7) -- (4.25,3.9) node[midway,xshift=0.8cm] {\Large $l_2$}; 
\draw [line width=0.015cm, decorate,decoration={brace,amplitude=10pt,mirror}](4.25,4.1) -- (4.25,6.7) node[midway,xshift=0.8cm] {\Large $n_2$};

\end{tikzpicture} }}
\caption{The poset $\widehat{\Pi}_4$}
\label{poset4}
\end{minipage}
}}
\end{figure}
Here, $n_1,n_2,n_3,m_1$ and $m_2$ are the number of edges.
Note that the Hibi ring of $\calO_{\Pi}$ with $\rank \calO_{\Pi}=0$ is isomorphic to a polynomial ring.
Moreover, it follows from Proposition~\ref{p_sum} (i) that the Hibi ring of $\calO_{\Pi}$ with $\rank \calO_{\Pi}=1$ is isomorphic to the Segre product of two polynomial rings.

Weights of the Hibi rings whose divisor class groups have rank 2 are computed In \cite[Sections~3.2 and 3.3]{N}.
The following table summarizes the weights of the Hibi rings associated with the posets in Figures~\ref{poset1}, \ref{poset2}, \ref{poset3} and \ref{poset4}.

\bigskip

\begin{center}
\begin{tabular}{ccccc} \hline \addlinespace[4pt]
Poset & $\Pi_1$ & $\Pi_2$ & $\Pi_3$ & $\Pi_4$ \\ \addlinespace[4pt] \hline 
\addlinespace[4pt]
Weights & 
\begin{tabular}{c}
$(1,0)\times n_1$ \\ \addlinespace[4pt]
$(0,1)\times n_2$ \\ \addlinespace[4pt]
$(-1,-1)\times n_3$ 
\end{tabular}&
\begin{tabular}{c}
$(1,0)\times n_1$ \\ \addlinespace[4pt]
$(-1,0)\times n_2$ \\ \addlinespace[4pt]
$(0,1)\times m_1$ \\ \addlinespace[4pt]
$(0,-1)\times m_2$
\end{tabular}&
\begin{tabular}{c}
$(1,0)\times n_1$ \\ \addlinespace[4pt]
$(-1,-1)\times n_2$ \\ \addlinespace[4pt]
$(0,1)\times n_3$ \\ \addlinespace[4pt]
$(0,-1)\times m_1$ 
\end{tabular}&
\begin{tabular}{c}
$(1,0)\times n_1$ \\ \addlinespace[4pt]
$(-1,0)\times n_2$ \\ \addlinespace[4pt]
$(-1,1)\times m_1$ \\ \addlinespace[4pt]
$(0,1)\times l_1$ \\ \addlinespace[4pt]
$(0,-1)\times l_2$
\end{tabular} \\
\addlinespace[4pt]
\hline
\end{tabular}
\end{center}

\bigskip

\noindent Here $\times n_i$, $\times m_i$ and $\times l_i$ stand for the multiplicities.


\subsection{Gale-diagrams}\label{subsec:gale}

In this subsection, we recall the notation of Gale-diagrams of a polytope, which helps us to classify the isomorphic classes of toric rings.

Throughout this subsection,
let $P\subset \RR^d$ be a $d$-dimensional polytope with the vertex set $\{\vb_1,\ldots,\vb_n\}$ and suppose that $P$ has the irreducible representation (\ref{repre}).

We consider
$$D(P)=\left\{(\alpha_1,\ldots,\alpha_n)\in \RR^n : \sum_{i\in [n]} \alpha_i\vb_i={\bf 0} \text{ and } \sum_{i\in [n]} \alpha_i=0 \right\}.$$
We can see that $D(P)$ is a $(n-d-1)$-dimensional subspace of $\RR^d$.
Let $\bb_1,\ldots,\bb_{n-d-1}$ be a basis of $D(P)$.
Moreover, for $i\in [n]$, let $\overline{\bb_i}$ denote the $i$-th column vector of the $(n-d-1)\times n$ matrix 
$\begin{pmatrix}
\bb_1  \\
\vdots  \\
\bb_{n-d-1}    
\end{pmatrix}$, that is, 
$\overline{\bb}_i=(\bb_1^{(i)},\ldots,\bb_{n-d-1}^{(i)})$, where for a vector $\vb \in \RR^n$, $\vb^{(i)}$ denotes the $i$-th coordinate of $\vb$.

Then, the $n$-tuple $(\overline{\bb_1},\ldots,\overline{\bb_n})$ is called the \textit{Gale-transform} of $\{\vb_1,\ldots,\vb_n\}$ (or of $P$). 
Furthermore, let 
$$\widehat{\bb_i} = \begin{cases} \overline{\bb_i}/||\overline{\bb_i}|| &\text{ if } \overline{\bb_i}\neq {\bf 0}, \\
{\bf 0} &\text{ if }\overline{\bb_i}= {\bf 0} \end{cases}$$
for each $i\in [n]$, where $\| \vb \|=\sqrt{\ang{\vb,\vb}}$ for a vector $\vb \in \RR^n$.
Then, the $n$-tuple $(\widehat{\bb_1},\ldots,\widehat{\bb_n})$ is called the \textit{Gale-diagram} of $\{\vb_1,\ldots,\vb_n\}$ (or of $P$).
Gale-transforms and Gale-diagrams depend on the choice of the basis of $D(P)$.

Especially, we are interested in the case $n-d-1=2$, that is, $P$ just has $d+3$ vertices.
In this case, we can draw a \textit{standard} (or \textit{reduced}) Gale-diagram from the Gale-diagram $\widehat{\bb_1},\ldots,\widehat{\bb_n}$, which consists of the unit circle centered at the origin of $\RR^2$ and diameters having at least one endpoint with multiplicity.
See \cite[Section 6.3]{Gr} for the precise way to draw it.

We say that two polytopes $Q$ and $Q'$ are \textit{combinatorially equivalent} or of the same \textit{combinatorial type} (resp. \textit{dual} to each other) if there exists a one-to-one mapping $\Phi$ between the set of all faces of $Q$ and the set of all faces of $Q'$ such that $\Phi$ is inclusion-preserving (resp. inclusion-reversing).
Note that the classes of simplicial polytopes, which are polytopes whose facets are simplices, and simple polytopes are dual to each other.

According to \cite[Sections 5.4 and 6.3]{Gr}, the following facts are known: 
\begin{itemize}
\item Two $d$-dimensional polytopes with $d + 3$ vertices are combinatorially equivalent if and only if their standard Gale-diagrams are orthogonally equivalent (i.e., isomorphic under an orthogonal linear transformation of $\RR^2$ onto itself).
\item $P$ is simplicial if and only if no diameter of the standard Gale-diagram has both endpoints. 
\end{itemize}

Clearly, all polytopes which are dual to $P$ have the same combinatorial type.
Whenever we consider dual polytopes, we focus on their combinatorial type, so we call them \textit{the} dual polytope of $P$.

For the remainder of this subsection, we will explain how to obtain a Gale-diagram of the dual polytope $P$.
The \textit{polar} $P^{\circ}$ of $P$ is defined as
$$P^{\circ}=\{ \xb\in \RR^d : \ang{\xb,\yb}\ge -1 \text{ for any $\yb\in P$}\}.$$
It is known that if the origin is an interior point of $P$, then $P^{\circ}$ is a polytope and is dual to $P$.
Moreover, $\{\ab_F/b_F\ : F\in \Psi(P)\}$ is the vertex set of $P^{\circ}$.

The following theorem means that we can get a Gale-diagram of the dual polytope of $P$ from weights of its toric ring.

\begin{thm}\label{thm:weights_Gale}
Suppose that $P$ is normal, $\ZZ\calA(P)=\ZZ^{d+1}$ and $\Cl(\kk[P])\cong \ZZ^r$.
Let $\{\beta_F\}_{F\in \Psi(P)}$ be weights of $\kk[P]$.
Then, $(\beta_F/\|\beta_F\|)_{F\in \Psi(P)}$ is a Gale-diagram of the dual polytope of $P$.
\end{thm}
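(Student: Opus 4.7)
The plan is to exhibit an explicit dictionary between the weight data $(\beta_F)_{F\in\Psi(P)}$ and a Gale-transform of the dual polytope of $P$, and then to check that after unit normalization the two coincide. Throughout I may translate $P$ so that the origin lies in its interior; this changes neither the combinatorial type of the dual polytope, nor (up to isomorphism) the toric ring, nor the weights, but it forces $b_F>0$ for every $F\in\Psi(P)$, so that the polar $P^{\circ}$ is an honest dual polytope with vertex set $\{\ab_F/b_F\}_{F\in\Psi(P)}$.

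Set $n=|\Psi(P)|$ and let $B$ be the $n\times(d+1)$ matrix whose $F$-th row is $\cb_F=(\ab_F,b_F)$. Under the identification $\ZZ\calA(P)=\ZZ^{d+1}$, the description~(\ref{groupS}) says precisely that $\calS=B\ZZ^{d+1}$, so the chosen isomorphism $\iota\colon\calF/\calS\xrightarrow{\sim}\ZZ^r$ extends (after tensoring with $\RR$) to an $\RR$-linear surjection $W\colon\RR^n\to\RR^r$ whose kernel equals $B\RR^{d+1}$ and whose $F$-th column is $\beta_F$. Taking orthogonal complements in $\RR^n$ gives $\mathrm{Im}(W^{\top})=(B\RR^{d+1})^{\perp}=\ker B^{\top}$; in particular, the columns of $W^{\top}$, whose $F$-th entries are the coordinates of $\beta_F$, form a basis of $\ker B^{\top}$.

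The key step is to match $\ker B^{\top}$ with the Gale space $D(P^{\circ})$. Spelling out the definition,
\[
D(P^{\circ})=\Bigl\{(\mu_F)\in\RR^n:\sum_F\mu_F(\ab_F/b_F)=0\text{ and }\sum_F\mu_F=0\Bigr\},
\]
the diagonal change of variables $\mu_F=b_F\lambda_F$ converts both relations into $\sum_F\lambda_F\cb_F=0$, i.e., into $\lambda\in\ker B^{\top}$. Hence $\Phi\colon(\lambda_F)\mapsto(b_F\lambda_F)$ is an $\RR$-linear isomorphism $\ker B^{\top}\xrightarrow{\sim} D(P^{\circ})$. Transporting the basis of $\ker B^{\top}$ found above through $\Phi$ yields a basis $\{w'_1,\dots,w'_r\}$ of $D(P^{\circ})$ in which the $F$-th coordinate of $w'_i$ equals $b_F\cdot\beta_F^{(i)}$, so the Gale-transform computed with this basis assigns the vector $b_F\beta_F$ to the vertex $\ab_F/b_F$. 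Since $b_F>0$, normalization gives $\widehat{\bb}_F=\beta_F/\|\beta_F\|$, as required. The main obstacle will be the duality bookkeeping $\mathrm{Im}(W^{\top})=\ker B^{\top}$ together with the rescaling identification $\ker B^{\top}\cong D(P^{\circ})$; once these two are in place, the Gale-transform computation and the positivity argument are essentially automatic.
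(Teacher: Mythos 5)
Your argument is correct and follows essentially the same route as the paper: translate $P$ so that the origin is interior (forcing $b_F>0$), identify the dual polytope with the polar whose vertices are $\ab_F/b_F$, and show that $(b_F\beta_F)_{F\in\Psi(P)}$ is a Gale-transform, the rescaling by $b_F$ being absorbed by the normalization. The only substantive difference is in the linear-independence step: the paper verifies membership in $D(P^{\circ})$ by the direct computation of Lemma~\ref{weight_relation} and then invokes Chouinard's theorem (the weights generate $\ZZ^r$ as a semigroup, so they cannot lie on a hyperplane), whereas you obtain both membership and independence at once from the duality $\mathrm{Im}(W^{\top})=(\ker W)^{\perp}=\ker B^{\top}$ together with injectivity of $W^{\top}$; your version is slightly cleaner, though it silently uses that $B$ has rank $d+1$ (i.e.\ $\ZZ\calA(P)\cong\calS$), which the paper also takes for granted.
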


Before we give the proof, we prepare the following lemma:

\begin{lem}\label{weight_relation}
Work with the same assumption and notation as Theorem~\ref{thm:weights_Gale}.
Then, we have $\sum_{F\in \Psi(P)}\beta_F^{(j)}\ab_F={\bf 0}$ and $\sum_{F\in \Psi(P)}\beta_F^{(j)}b_F=0$ for each $j\in [r]$.
\end{lem}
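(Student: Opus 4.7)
The plan is to unwind the definition of weights through the isomorphism $\iota\colon \calF/\calS \to \ZZ^r$ that was fixed just before the statement. Recall $\beta_F = \iota(\epsilon_F)$, so the composite homomorphism
\[
\Phi\colon \calF \longrightarrow \calF/\calS \xrightarrow{\;\iota\;} \ZZ^r
\]
satisfies $\Phi(\epsilon_F) = \beta_F$, and by construction $\Phi$ vanishes on $\calS$. Hence every relation expressing an element of $\calS$ in terms of the basis $\{\epsilon_F\}$ becomes, after applying $\Phi$, an identity in $\ZZ^r$ among the $\beta_F$'s.

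Next, I would invoke the description (\ref{groupS}) of $\calS$, which says that $\calS$ contains $\sum_{F\in\Psi(P)} \ang{\vb',\cb_F}\,\epsilon_F$ for every $\vb' \in \ZZ\calA(P)$. Applying $\Phi$ to such an element yields
\[
\sum_{F\in\Psi(P)} \ang{\vb',\cb_F}\,\beta_F \;=\; \mathbf{0} \quad \text{in } \ZZ^r,
\]
for every $\vb' \in \ZZ\calA(P)$. By hypothesis $\ZZ\calA(P) = \ZZ^{d+1}$, so the identity above holds for \emph{every} $\vb' \in \ZZ^{d+1}$.

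It then suffices to plug in the standard basis vectors $e_1,\ldots,e_{d+1}$ of $\ZZ^{d+1}$. Writing $\cb_F = (\ab_F,b_F)$, we have $\ang{e_i,\cb_F} = \ab_F^{(i)}$ for $1 \le i \le d$ and $\ang{e_{d+1},\cb_F} = b_F$. Reading off the $j$-th coordinate of the resulting vector equations in $\ZZ^r$ gives
\[
\sum_{F \in \Psi(P)} \ab_F^{(i)}\,\beta_F^{(j)} = 0 \quad (1 \le i \le d), \qquad \sum_{F\in\Psi(P)} b_F\,\beta_F^{(j)} = 0,
\]
which are precisely the two asserted identities (the first one, collected across $i$, gives $\sum_F \beta_F^{(j)}\ab_F = \mathbf{0}$).

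There is no substantive obstacle here; the whole content is to recognize that the subgroup $\calS$ of linear relations among the $\epsilon_F$'s becomes, under $\iota$, a complete set of linear relations among the weights $\beta_F$, and then the assumption $\ZZ\calA(P) = \ZZ^{d+1}$ lets us access every coordinate direction by testing on standard basis vectors.
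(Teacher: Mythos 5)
Your proposal is correct and follows essentially the same route as the paper: both arguments observe that for each standard basis vector $\eb_i$ of $\ZZ^{d+1}=\ZZ\calA(P)$ the element $\sum_{F\in\Psi(P)}\ang{\eb_i,\cb_F}\epsilon_F$ lies in $\calS$ by (\ref{groupS}), hence maps to ${\bf 0}$ in $\ZZ^r$, and then read off coordinates to obtain $\sum_{F}\beta_F^{(j)}\cb_F={\bf 0}$. Your explicit remark that the hypothesis $\ZZ\calA(P)=\ZZ^{d+1}$ is what makes the standard basis vectors available is a point the paper leaves implicit, but the substance is identical.
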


\begin{proof}
Let $\eb_i$ be the $i$-th unit vector of $\ZZ^{d+1}$.
Since $\eb_i\in \ZZ\calA(P)$ for each $i\in [d+1]$, $\sum_{F\in \Psi(P)}\ang{\eb_i,\cb_F}\epsilon_F$ belongs to $\calS$ from (\ref{groupS}).
Thus, by considering its image in $\ZZ^r$, we can obtain that
$\sum_{F\in \Psi(P)}\cb_F^{(i)}\beta_F={\bf 0}$ for each $i\in [d+1]$.
This fact is equivalent to $\sum_{F\in \Psi(P)}\beta_F^{(j)}\cb_F={\bf 0}$ for each $j\in [r]$.
Therefore, we obtain the desired equations.
\end{proof}

\medskip

\begin{proof}[Proof of Theorem~\ref{thm:weights_Gale}]
Let $\wb\in \RR^d$ be a vector such that the interior of $P+\ub$ has ${\bf 0}$.
Then, $Q=(P+\ub)^{\circ}$ is the dual polytope of $P$.
Moreover, $P+\ub$ has the irreducible representation
$$P+\ub=\bigcap_{F\in \Psi(P)}H^+(\ab_F;b'_F),$$
where $b'_F=b_F-\ang{\ab_F,\ub}$.
Note that $b'_F \neq 0$ for any $F\in \Psi(P)$ since ${\bf 0}$ belongs to the interior of $P+\ub$.

We show that $(b'_F\beta_F)_{F\in \Psi(P)}$ is a Gale-transform of $Q$.
For each $j\in [r]$, we define the vector $\bb_j$ of $\RR^{|\Psi(P)|}$ as $\bb_j:=(b'_F\beta_F^{(j)})_{F\in \Psi(P)}$.
Then, we can see that
$$\overline{\bb_j}=(b'_F\beta_F^{(1)},\ldots,b'_F\beta_F^{(r)})=b'_F\beta_F
\quad \text{ and }\quad \widehat{\bb_j}=\beta_F/\|\beta_F\|.$$
Hence, it is enough to show that $\bb_j$'s satisfy the following:
$$
\text{(i) for each $j\in [r]$, $\bb_j$ is in $D(Q)$ } \quad \text{ and } \quad \text{(ii) $\bb_1,\ldots,\bb_r$ form a basis of $D(Q)$.}
$$

(i) Note that the vertex set of $Q$ is $\{\ab_F / b'_F\ : F\in \Psi(P)\}$. 
From Lemma~\ref{weight_relation}, for each $j\in [r]$, 
$$\sum_{F\in \Psi(P)} b'_F\beta_F^{(j)}\cdot \ab_F/b'_F=\sum_{F\in \Psi(P)} \beta_F^{(j)}\ab_F={\bf 0}.$$
Moreover,
\begin{align*}
\sum_{F\in \Psi(P)} b'_F\beta_F^{(j)}&=\sum_{F\in \Psi(P)} b_F\beta_F^{(j)}-\sum_{F\in \Psi(P)} \ang{\ab_F,\ub}\beta_F^{(j)} \\
&=0 - \ang{\sum_{F\in \Psi(P)}\beta_F^{(j)}\ab_F,\ub}
=- \ang{{\bf 0},\ub}=0.
\end{align*}
Therefore, we have $\bb_j\in D(Q)$ for all $j\in [r]$.

(ii) Note that $D(Q)$ is a $r$-dimensional subspace of $\RR^{|\Psi(P)|}$.
Thus, it is enough to show that $\bb_1,\ldots,\bb_r$ are linearly independent.
Suppose that there exists $\ab=(a_1,\ldots,a_r) \in \RR^r$ such that 
$$a_1\bb_1 +\cdots +a_r\bb_r=0.$$
Then, for each $F\in \Psi(P)$, one has 
$$a_1(b'_F\beta_F^{(1)})+\cdots+a_r(b'_F\beta_F^{(r)})=b'_F\ang{\ab,\beta_F}=0.$$
Since $b'_F\neq 0$, we have $\ang{\ab,\beta_F}=0$  for any $F\in \Psi(P)$, equivalently, $\beta_F$'s lie on the hyperplane $H_\ab$.
This is a contradiction to the fact that $\beta_F$'s span $\calF/\calS \cong \ZZ^r$ as a semigroup (\cite[Theorem 2]{Cho}).
\end{proof}

\section{A new family of $(0,1)$-polytopes}\label{sec:new} 

In this section, we construct a new class of $(0,1)$-polytopes, denoted by $P_{n_1,\ldots,n_k}$, and study its properties.
We show that $P_{n_1,\ldots,n_k}$ is normal and torsionfree.
Moreover, we see that if $k\ge 3$, then $\kk[P_{n_1,\ldots,n_k}]$ is not isomorphic to any Hibi ring.

Let $n_1,\ldots,n_k$ be positive integers and let $d=n_1+\cdots+n_k$.
Moreover, let $\calB_n$ be the standard basis of $\RR^n$ and let $\calB_{n_1,\ldots,n_k}=\calB_{n_1}\times \cdots \times \calB_{n_k}$.
Then, we define the subset $V_{n_1,\ldots,n_k}$ of $\ZZ^d$ as
$$V_{n_1,\ldots,n_k}=\{{\bf 0}\}\cup \calB_d \cup \calB_{n_1,\ldots,n_k}$$
and define the $(0,1)$-polytope $P_{n_1,\ldots,n_k}=\conv(V_{n_1,\ldots,n_k})$.

\begin{ex}
We can see that $$P_{1,1,1}=\conv(\{(0,0,0),(1,0,0),(0,1,0),(0,0,1),(1,1,1)\})$$
and $$\kk[P_{1,1,1}]\cong \kk[x_1,x_2,x_3,x_4,x_5]/(x_2x_3x_4-x_0^2x_5).$$
In addition, we can see that
\begin{align*}
P_{1,1,1,2}=\conv(\{&(0,0,0,0,0), (1,0,0,0,0), (0,1,0,0,0), (0,0,1,0,0)\\
& (0,0,0,1,0), (0,0,0,0,1), (1,1,1,1,0),(1,1,1,0,1)\})
\end{align*}
and 
$$\kk[P_{1,1,1,2}]\cong \kk[x_1,\ldots,x_8]/(x_6x_7-x_5x_8,x_2x_3x_4x_6-x_1^3x_8,x_2x_3x_4x_5-x_1^3x_7).$$
\end{ex}

\bigskip

First, we describe the facet defining inequalities of $P_{n_1,\ldots,n_k}$.
By considering the embedding 
$$\calB_{n_p} \hookrightarrow \RR^d \quad \quad \ib_p \mapsto (\underbrace{0,\ldots \ldots,0}_{n_1+\cdots+n_{p-1}}, \; \ib_p\; , \underbrace{0,\ldots \ldots,0}_{n_{p+1}+\cdots+n_k}),$$
we regard $\ib_p\in \calB_{n_p}$ as the element of $\RR^d$.

For $p\in [k]$, let $\fb_p=(k-2){\bf 1}_p-\sum_{q\in [k]\setminus \{p\}}{\bf 1}_q$, where ${\bf 1}_p=\sum_{\ib_p\in \calB_{n_p}}\ib_p$.

\begin{prop}\label{prop:facet}
The $(0,1)$-polytope $P_{n_1,\ldots,n_k}$ has the irreducible representation:
$$P_{n_1,\ldots,n_k}=\Bigl(\bigcap_{\eb\in \calB_d}H_{\eb}^{+}\Bigr)\cap \Bigl(\bigcap_{p\in [k]}H^+(\fb_p;1)\Bigr).$$
\end{prop}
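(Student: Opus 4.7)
The plan is to show both inclusions $P_{n_1,\ldots,n_k} \subseteq Q$ and $Q \subseteq P_{n_1,\ldots,n_k}$, where $Q$ denotes the right-hand-side intersection, and then verify that each listed half-space defines a facet so that the representation is irreducible.

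\textbf{Easy inclusion.} For $P_{n_1,\ldots,n_k} \subseteq Q$ it suffices to check each inequality at each element of $V_{n_1,\ldots,n_k}$. The coordinate constraints $\langle\cdot,\eb\rangle\geq 0$ are immediate. For $\langle\cdot,\fb_p\rangle+1\geq 0$, a direct computation gives value $1$ at ${\bf 0}$, value $k-1$ at any $\ib_p\in\calB_{n_p}$, value $0$ at any $\ib_r\in\calB_{n_r}$ with $r\neq p$, and value $(k-2)-(k-1)+1=0$ at any $\sum_q \ib_q \in \calB_{n_1,\ldots,n_k}$.

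\textbf{Reverse inclusion.} Assume $k\geq 2$ (the case $k=1$ is a simplex and is immediate). For $x\in Q$, set $\sigma_p:=\langle x,{\bf 1}_p\rangle$ and $S:=\sum_p\sigma_p$. The $\fb_p$-inequality rewrites as $\sigma_p\geq (S-1)/(k-1)$. Put $\mu:=\max\{0,(S-1)/(k-1)\}$, so $0\leq\mu\leq\min_p\sigma_p$. For $\ib_p\in\calB_{n_p}$, writing $x_{\ib_p}$ for the corresponding coordinate of $x$, set $\mu_{\ib_p}:=(\mu/\sigma_p)\,x_{\ib_p}$ (interpreted as $0$ when $\sigma_p=0$, which forces $\mu=0$). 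I will then exhibit $x$ as the convex combination with weight $\lambda_0:=1-S+(k-1)\mu$ on ${\bf 0}$, weight $\alpha_{\ib_p}:=x_{\ib_p}-\mu_{\ib_p}$ on each $\ib_p\in\calB_d$, and the factored weight
$$\lambda_{(\ib_1,\ldots,\ib_k)}:=\mu^{1-k}\prod_{p\in[k]}\mu_{\ib_p}$$
on each product vertex $\sum_p\ib_p\in\calB_{n_1,\ldots,n_k}$ (all zero when $\mu=0$). Non-negativity follows from the bounds on $\mu$; the weights sum to $1-S+(k-1)\mu+\sum_p(\sigma_p-\mu)+\mu=1$; and the marginal identity $\sum_{\ib_q:\, q\neq p}\lambda_{(\ib_1,\ldots,\ib_k)}=\mu_{\ib_p}$ reproduces $x_{\ib_p}$.

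\textbf{Irreducibility.} For each $\eb\in\calB_d$, the face $\{\langle x,\eb\rangle=0\}$ contains ${\bf 0}$ and $d-1$ of the other standard basis vectors, hence has dimension $d-1$. For each $p\in[k]$, the face $F_p:=\{\langle x,\fb_p\rangle=-1\}$ contains $\calB_d\setminus\calB_{n_p}$ together with all of $\calB_{n_1,\ldots,n_k}$; after fixing a basepoint $\ib_{r_0}\in\calB_{n_s}$ with $s\neq p$, one obtains from differences of product vertices (fixing all blocks but one) all $\sum_q(n_q-1)=d-k$ directions spanning the block-sum-zero subspaces of the individual blocks, while the translates $\ib_r-\ib_{r_0}$ for $r\in\calB_{n_q}$, $q\neq p,s$, together with any single product-vertex translate, supply $k-1$ additional linearly-independent directions that alter the block-sum values. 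Hence $\dim F_p=d-1$.

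\textbf{Main obstacle.} The substantive point is the reverse inclusion; the core idea is the transportation-polytope-style factorization $\lambda_{(\ib_1,\ldots,\ib_k)}=\mu^{1-k}\prod_p\mu_{\ib_p}$. The inequality associated with $\fb_p$ is exactly what simultaneously guarantees $\mu\leq\min_p\sigma_p$ (so that the marginal system is consistent) and $\lambda_0\geq 0$.
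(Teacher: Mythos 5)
Your proposal is correct, and for the substantive part --- the reverse inclusion --- it takes a genuinely different route from the paper. The paper argues at the level of vertices: it notes that $P\subset P'$ is clear, that every vertex of the right-hand side $P'$ lies on $d$ affinely independent bounding hyperplanes, and then runs a case analysis (first on $0/1$ vertices via the derived bound $\ang{\vb,{\bf 1}_p}\le 1$, then on non-integral candidates, which are shown either to be midpoints of two other points of $P'$ or to be convex combinations $\sum t_p\ib_p+t_1\sum_p\ib_p$ of elements of $V_{n_1,\ldots,n_k}$) to conclude that $P'$ has no vertices outside $V_{n_1,\ldots,n_k}$. You instead write down, for an arbitrary $x$ in the intersection of half-spaces, an explicit barycentric representation with the transportation-style product weights $\lambda_{(\ib_1,\ldots,\ib_k)}=\mu^{1-k}\prod_p\mu_{\ib_p}$; I checked the nonnegativity of all weights, the normalization, and the marginal identity, and they are all correct (with the stated conventions when $\mu=0$ or $\sigma_p=0$). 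Your approach buys a uniform closed formula with no case analysis and no appeal to the characterization of vertices of an H-polytope, and it isolates cleanly how the inequality $\sigma_p\ge (S-1)/(k-1)$ is exactly what makes the marginal system feasible; the paper's approach is shorter to state because it only has to treat putative vertices, which after the midpoint reduction have at most one nonzero coordinate per block. Your facet-dimension count for $H(\fb_p;1)$ is more detailed than the paper's one-line assertion and is correct, though the phrase describing the $k-1$ extra directions is terse and would benefit from the explicit observation that the product-vertex translate has nonzero total block-sum while the $\ib_r-\ib_{r_0}$ have block-sum zero, which is what forces independence.
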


\begin{proof}
If $k=1$, then $P_{n_1}=\conv(\{{\bf 0}\}\cup \calB_{n_1})$ and we can see easily that $P_{n_1}$ has the above irreducible representation.
In what follows, suppose that $k\ge 2$.

We can find $d$ affinely independent vectors in $V_{n_1,\ldots,n_k}$ on each hyperplane. Thus, these define facets of $P_{n_1,\ldots,n_k}$.

Let $P'=\left(\bigcap_{\eb\in \calB_d}H_{\eb}^{+}\right)\cap \left(\bigcap_{p\in [k]}H^+(\fb_p;1)\right)$ and we show $P=P'$. It is clear that $P \subset P'$ since $V_{n_1,\ldots,n_k} \subset P'$. Also, each vertex of $P_{n_1,\ldots,n_l}$ is on $d$ affinely independent hyperplanes $H_\eb$ or $H(\fb_p;1)$, that is, the set of vertices of $\calP'$, denoted by $V$, contains $V_{n_1,\ldots,n_k}$. Therefore, it is enough to show that $V \subset V_{n_1,\ldots,n_k}$.

Note that if $\xb\in P'$, then $\xb\in H^+(\fb_p;1)$, i.e., $\langle \xb, \fb_p \rangle +1\ge 0$ for all $p\in[k]$.
Thus, for any $p\in [k]$, we have
\begin{align}\label{ineq}
\sum_{q\in [k]\setminus \{p\}} (\langle \xb, \fb_q \rangle +1)=-(k-1)\langle \xb, {\bf 1}_p \rangle +(k-1)\ge 0 \; \Rightarrow \; \langle \xb, {\bf 1}_p \rangle \le 1.
\end{align}
Moreover, since $\xb \in H^+_\eb$ for all $\eb \in \calB_d$, we see that $V\subset P' \subset [0,1]^d$. 

Assume that there exists an element $\vb$ in $V\setminus V_{n_1,\ldots,n_k}$.
First, suppose that $\vb \in \{0,1\}^d$.
In this case, we see that $\langle \vb, {\bf 1}_p \rangle =0 \text{ or }1$ holds for all $p\in [k]$ from (\ref{ineq}).
One of the following three cases happens;
\begin{itemize}
\item[(i)] $\langle \vb, {\bf 1}_p \rangle =0$ for all $p\in [k]$ or $\langle \vb, {\bf 1}_p \rangle =1$ for all $p\in [k]$.
\item[(ii)] $\langle \vb, {\bf 1}_p \rangle =1$ for some $p\in [k]$ and $\langle \vb, {\bf 1}_q \rangle =0$ for all $q\in [k]\setminus \{p\}$.
\item[(iii)] There exist $p_1,p_2,p_3 \in [k]$ such that  $\langle \vb, {\bf 1}_{p_1} \rangle = \langle \vb, {\bf 1}_{p_2} \rangle=1$ and  $\langle \vb, {\bf 1}_{p_3} \rangle =0$.
\end{itemize}
In the cases (i) and (ii), $\vb$ must lie in $V_{n_1,\ldots,n_k}$.
In case (iii), we have $\langle \vb, \fb_{p_3} \rangle +1 <0$, that is, $\vb \notin H^+(\fb_{p_3};1)$, a cotradiction.

Now, suppose that $\vb \notin \{0,1\}^d$. If there are two elements $\ib_p, \jb_p \in \calB_{n_p}$ with $\ang{\vb ,\ib_p}, \ang{\vb,\jb_p}>0$ for some $p\in [k]$,  then $\vb$ is not a vertex of $\calP'$.
Indeed, let \vspace{-0.15cm}
$$\vb'=\vb+\epsilon \ib_p-\epsilon \jb_p, \quad \vb''=\vb-\epsilon \ib_p+\epsilon \jb_p,  \vspace{-0.15cm}$$
where $\epsilon >0$ is sufficiently small. Then, we can check that $\vb', \vb''\in P'$ and $\vb=\frac{1}{2}(\vb'+\vb'')$.
Therefore, we assume that for each $p\in [k]$, $\ang{\vb, \ib_p}\ge 0$ for some $\ib_p\in \calB_{n_p}$ and $\ang{\vb,\jb_p}=0$ for all $\jb_p\in \calB_{n_p}\setminus \{\ib_p\}$.
Let $t=\sum_{p\in[k]}\ang{\vb,\ib_p}$.
In the case $t\le 1$, let $t_p=\ang{\vb,\ib_p}$ for $p\in [k]$ and $t_{k+1}=1-t$.
Then, we have $\vb=\sum_{p\in [k]}t_p\ib_p+t_{k+1}{\bf 0}$ and $\sum_{p\in [k+1]}t_p=1$, and hence $\vb\notin V$.
Finally, we consider the case of $t>1$.
If there exists $p\in [k]$ such that $\ang{\vb,\ib_p}=0$ then we have $\langle \vb, \fb_p \rangle+1=-\sum_{q\in[k]\setminus \{p\}}\ang{\vb,\ib_q}+1=-t+1<0$, a contradiction. 
Thus, we can see that $\ang{\vb,\ib_p}> 0$ for all $p\in [k]$.
Moreover, since $\vb$ must lie on $d$ affinely independent hyperplanes $H_\eb$ or $H(\fb_p;1)$, we have $\vb \in H(\fb_p;1)$ for all $p\in [k]$, that is, $\langle \vb, \fb_p \rangle+1=0$ holds for any $p\in [k]$.
We may assume that $\ang{\vb,\ib_1}=\min\{ \ang{\vb,\ib_p} : p\in [k]\}$, and let $t_p=
\begin{cases} \ang{\vb,\ib_p}-\ang{\vb,\ib_1} & \text{ if $p\neq 1$}, \\
\ang{\vb,\ib_1} & \text{ if $p=1$}\end{cases}$ for $p\in [k]$.
Then, we obtain 
$$\vb=\sum_{p\in [k]\setminus \{1\}}t_p\ib_p+t_1\sum_{p\in[k]}\ib_p$$
and
$$\sum_{p\in [k]}t_p=-(k-2)\ang{\vb,\ib_1}+\sum_{p\in[k]\setminus \{1\}}\ang{\vb,\ib_p}=-(k-2)\ang{\vb,{\bf 1}_1}+\sum_{p\in[k]\setminus \{1\}}\ang{\vb,{\bf 1}_p}=-\langle \vb, \fb_1 \rangle=1.$$
Therefore, $v$ is not a vertex of $P'$, and hence $V$ coincides with $V_{n_1,\ldots,n_k}$.

\end{proof}

\begin{ex}
We consider $P_{1,1,1}$. 
Then, 
\begin{align*}
x_i&\ge 0 \quad \text{($i=1,2,3$)}, \\
\ang{\fb_p, \xb}+1= x_p-(x_{q_1}+x_{q_2})+1 &\ge 0 \quad \text{($p=1,2,3$ and $\{p,q_1,q_2\}=\{1,2,3\}$)}
\end{align*}
are the facet defining inequalities of $P_{1,1,1}$.

Moreover, we consider $P_{1,1,1,2}$.
Then, the following are the facet defining inequalities of $P_{1,1,1,2}$:
\begin{align*}
x_i &\ge 0 \quad \text{($i=1,\ldots,5$)}, \\
2x_p-(x_{q_1}+\cdots +x_{q_4})+1 &\ge 0 \quad \text{($p=1,2,3$ and $\{p,q_1,\ldots,q_4\}=\{1,\ldots,5\}$) and} \\
2(x_4+x_5)-(x_1+x_2+x_3)+1 &\ge 0.
\end{align*}
\end{ex}

\bigskip

Next, we investigate the initial ideal of the toric ideal $I_{P_{n_1,\ldots,n_k}}$ with respect to a monomial order and provide its \Gro basis, which allows us to study the normality.
For the fundamental materials on initial ideals and \Gro bases, consult, e.g., \cite{HHO}.

For $x_{\vb} \in T=\kk[x_\vb : \vb\in P_{n_1,\ldots,n_k} \cap \ZZ^d]$, we denote by $x_{\ib_1,\ldots,\ib_k}$ instead of $x_{(\ib_1,\ldots,\ib_k)}$ for $(\ib_1,\ldots,\ib_k) \in \calB_{n_1,\ldots, n_k}$.
Let $<$ denote the reverse lexicographic order on $T$ induced by the ordering of the variables as follows:

\begin{itemize}
\item $x_\eb < x_{\ib_1,\ldots,\ib_k}$  for any $\eb \in \{{\bf 0}\}\cup\calB_d$ and any $(\ib_1,\ldots,\ib_k) \in \calB_{n_1,\ldots,n_k}$; \vspace{0.2cm}
\item For $\eb, \eb'\in \{{\bf 0}\}\cup\calB_d$ (resp. for $(\ib_1,\ldots,\ib_k), (\jb_1,\ldots,\jb_k) \in \calB_{n_1,\ldots,n_k}$), $x_\eb < x_{\eb'}$ (resp. $x_{\ib_1,\ldots,\ib_k} < x_{\jb_1,\ldots,\jb_k}$) if and only if $\eb < \eb'$ (resp. $(\ib_1,\ldots,\ib_k)<(\jb_1,\ldots,\jb_k)$), which means that the leftmost nonzero component of the vector $\eb'-\eb$ (resp. $(\jb_1-\ib_1,\ldots,\jb_k-\ib_k)$) is positive.
\end{itemize}

\medskip

Moreover, let $\calG_{n_1,\ldots,n_k}$ be the sets of the following binomials in $T$: 
\begin{itemize}
\item[(b1)] $x_{\ib_1}x_{\ib_2} \cdots x_{\ib_k}-x_{{\bf 0}}^{k-1}x_{\ib_1,\ldots,\ib_k}$ for $(\ib_1,\ldots,\ib_k)\in \calB_{n_1,\ldots,n_k}$;
\item[(b2)] $x_{\jb_p}x_{\ib_1,\ldots,\ib_p,\ldots,\ib_k}-x_{\ib_p}x_{\ib_1,\ldots,\jb_p,\ldots,\ib_k}$ for $(\ib_1,\ldots,\ib_k)\in \calB_{n_1,\ldots,n_k}$, $\jb_p \in \calB_{n_p}$ with $\ib_p<\jb_p$;
\item[(b3)] $x_{\ib_1,\ldots,\ib_k}x_{\jb_1,\ldots,\jb_k}-x_{\ib'_1,\ldots,\ib'_k}x_{\jb'_1,\ldots,\jb'_k}$ for $(\ib_1,\ldots,\ib_k), (\jb_1,\ldots,\jb_k) \in \calB_{n_1,\ldots,n_k},$
\end{itemize}
where we define 
$\ib'_p=\begin{cases} \ib_p \; \text{ if } x_{\ib_p}<x_{\jb_p}, \\ \jb_p \; \text{ else } \end{cases}$
and define $\jb'_p$ as satisfying  $\{\ib'_p,\jb'_p\}=\{\ib_p,\jb_p\}$.
Note that each leading term of these binomials is the initial monomial with respect to $<$.
Furthermore, these binomials belong to the toric ideal of $P_{n_1,\ldots,n_k}$, that is, $\calG_{n_1,\ldots,n_k} \subset I_{P_{n_1,\ldots,n_k}}$.

\bigskip

\begin{prop}\label{prop:gro} 

Let the notation be the same as above. Then, $\calG_{n_1,\ldots,n_k}$ is a \Gro basis of $I_{\calP_{n_1,\ldots,n_k}}$ with respect to $<$.

\end{prop}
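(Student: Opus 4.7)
My plan is to apply Buchberger's criterion. The inclusion $\calG_{n_1,\ldots,n_k}\subset I_{P_{n_1,\ldots,n_k}}$ has already been observed, so it suffices to check that for every pair $g,g'\in \calG_{n_1,\ldots,n_k}$, the S-polynomial $S(g,g')$ reduces to zero modulo $\calG_{n_1,\ldots,n_k}$. Pairs whose leading monomials are coprime automatically satisfy this by a standard fact for binomial ideals, so I would restrict attention to pairs whose leading monomials share at least one variable. These split by type into six cases: (b1,b1), (b1,b2), (b1,b3), (b2,b2), (b2,b3), (b3,b3).

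The guiding principle for the reductions is that (b3) implements a coordinate-wise sort on two hypercube variables $x_{\ib_1,\ldots,\ib_k}$, while (b2) is a single-coordinate swap between a vertex variable $x_{\jb_p}$ and one coordinate of a hypercube variable. Together they form a confluent rewriting system on monomials in the $x_\eb$'s and $x_{\ib_1,\ldots,\ib_k}$'s, entirely analogous to the classical sorting arguments used for order and chain polytopes. The relation (b1) is the only rule that mixes the two types of variables; it bridges $x_{\ib_1}\cdots x_{\ib_k}$ and $x_{\bf 0}^{k-1}x_{\ib_1,\ldots,\ib_k}$. For (b1,b1) the S-polynomial involves two monomials of the form $x_{\bf 0}^{k-1}\cdot(\text{vertices})\cdot x_{\ib_1,\ldots,\ib_k}$, and iteratively applying (b2) in each coordinate $p$ where the two index tuples disagree brings both sides to the same monomial. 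The mixed cases (b1,b2), (b1,b3) are handled by applying (b1) once to strip off or introduce an $x_{\bf 0}^{k-1}$-factor, and then finishing with (b2) or (b3).

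The hardest case will be (b3,b3): the S-polynomial of two sorting relations $x_Ax_B-x_{A\wedge B}x_{A\vee B}$ and $x_Ax_C-x_{A\wedge C}x_{A\vee C}$ sharing the common variable $x_A$ produces a difference of two products of three hypercube variables built from $\{A,B,C\}$. Verifying that this reduces to zero is equivalent to showing that coordinate-wise sorting of a triple of elements of $\calB_{n_1,\ldots,n_k}$ is confluent: applying (b3) repeatedly to pairs in each coordinate $p\in[k]$ drives both sides to the common coordinate-wise sorted normal form. This is handled by induction on the total number of inversions across the $k$ coordinates. Once all six cases are verified, Buchberger's criterion yields the claim.
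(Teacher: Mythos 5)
Your approach is correct in outline but genuinely different from the paper's. You run the classical Buchberger test: reduce all S-polynomials of non-coprime pairs, organized by the types (b1)--(b3), with the (b3,b3) case resting on the confluence of coordinate-wise sorting of tuples in $\calB_{n_1}\times\cdots\times\calB_{n_k}$ (an instance of Sturmfels' sorting-algebra argument, which does go through here since sorting a multiset in each coordinate independently yields a unique normal form, and your induction on inversions gives termination). One small simplification you miss: the leading term of (b1) involves only the variables $x_{\eb}$, $\eb\in\calB_d$, while that of (b3) involves only the variables $x_{\ib_1,\ldots,\ib_k}$, so every (b1,b3) pair has coprime leading monomials and that case is vacuous. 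The paper avoids S-polynomials entirely: it invokes the criterion of \cite[Theorem~3.11]{HHO}, namely that $\calG$ is a Gr\"obner basis if and only if distinct monomials outside $\ini_<(\calG)$ have distinct images under $\phi_P$. It then characterizes the standard monomials via conditions (a)--(c) forced by (b1)--(b3) and shows such a monomial can be reconstructed uniquely from its multidegree $t_0^{r_0}t_1^{r_1}\cdots t_d^{r_d}$. That route buys a single uniform argument in place of your six case analyses and sidesteps the confluence discussion altogether; your route is more self-contained (no appeal to the injectivity criterion) and makes the rewriting-system structure of the ideal explicit, at the cost of substantially more bookkeeping, all of which you have sketched but not carried out.
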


\noindent To show this proposition, we use the following lemma:

\begin{lem}[{\cite[Theorem 3.11]{HHO}}]\label{lem_gro}
Let $I\subset S=\kk[x_1,\ldots,x_n]$ be the toric ideal of an integral polytope and $\calG=\{g_1,\ldots,g_s\}$ the set of binomials in $I$. Fix a monomial order $<$ on $S$ and let $\ini_{<}(\calG)$ be the ideal of $S$ generated by the initial monomials $\ini_{<}(g_1),\ldots,\ini_<(g_s)$, that is, $\ini_{<}(\calG)=(\{ \ini_{<}(g) : g \in \calG\}) $. Then, the following conditions are equivalent: 
\begin{itemize}
\item[(i)] $\calG$ is a \Gro basis of $I$ with respect to $<$. 
\item[(ii)] For monomials $u, v \in S$, if $u\notin \ini_{<}(\calG)$, $v\notin \ini_{<}(\calG)$ and $u\neq v$ then $u-v\notin I$.
\end{itemize}
\end{lem}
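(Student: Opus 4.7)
The plan is to prove the two implications of this equivalence separately, using the fact that the toric ideal $I$ is the kernel of a ring homomorphism $\phi$ from $S$ into a Laurent polynomial ring, so that two monomials of $S$ differ by an element of $I$ if and only if they have the same image under $\phi$.

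For (i)$\Rightarrow$(ii), I would argue by contraposition. Take distinct monomials $u, v\in S$ with $u-v\in I$. After possibly swapping, assume $u>v$, so $\ini_<(u-v)=u$. Since $\calG$ is a \Gro basis of $I$, the initial ideal $\ini_<(I)$ equals $\ini_<(\calG)$; hence $u\in \ini_<(\calG)$, contradicting $u\notin \ini_<(\calG)$. This part of the argument is routine and does not rely on toricness.

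The substantive direction is (ii)$\Rightarrow$(i), where I would run the division algorithm with respect to $<$. For any nonzero $f\in I$, divide $f$ by the elements of $\calG$ to obtain a presentation $f=\sum_{i=1}^s h_ig_i + r$ in which no monomial in the support of $r$ is divisible by any $\ini_<(g_i)$, i.e., none lies in $\ini_<(\calG)$. Since $\sum h_ig_i\in (\calG)\subset I$, also $r\in I$. I claim $r=0$. Otherwise, write $r=\sum_j c_j u_j$ with distinct monomials $u_j\notin \ini_<(\calG)$ and nonzero $c_j\in \kk$, and apply $\phi$ to get $0=\sum_j c_j \phi(u_j)$ in the Laurent polynomial ring. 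Distinct Laurent monomials are $\kk$-linearly independent, so the only way this sum can vanish is if there exist $j_1\neq j_2$ with $\phi(u_{j_1})=\phi(u_{j_2})$; then $u_{j_1}-u_{j_2}\in I$ with both monomials outside $\ini_<(\calG)$, contradicting (ii). Thus $r=0$. Tracing back through the division algorithm, if $\ini_<(f)$ were not in $\ini_<(\calG)$, its leading term would have been deposited into $r$ at the very first step, forcing $r\neq 0$; so $\ini_<(f)\in \ini_<(\calG)$. Since $f\in I$ was arbitrary, $\ini_<(I)\subseteq \ini_<(\calG)$, and $\calG$ is a \Gro basis.

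The main obstacle — and the only place where the toric hypothesis is essential — is the implication ``$r\in I$ together with $\supp(r)\cap \ini_<(\calG)=\emptyset$ forces $r=0$''. Without toricness, the vanishing of $\sum_j c_j \phi(u_j)$ could not be read off term by term, and the mechanism for extracting a binomial relation $u_{j_1}-u_{j_2}\in I$ from the support of $r$ would break down. Here the $\kk$-linear independence of distinct Laurent monomials in the target of $\phi$ is the substitute for any Buchberger-style $S$-pair computation.
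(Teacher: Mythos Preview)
The paper does not prove this lemma; it is quoted verbatim from \cite[Theorem~3.11]{HHO} and used as a black box in the proof of Proposition~\ref{prop:gro}. So there is no ``paper's own proof'' to compare against.

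That said, your argument is correct and is essentially the standard proof of this fact. One small clean-up: once you have shown that the remainder $r$ vanishes for every $f\in I$, you are already done, since ``every element of $I$ reduces to $0$ modulo $\calG$'' is one of the standard equivalent characterizations of $\calG$ being a \Gro basis. The extra sentence about the leading term being ``deposited into $r$ at the very first step'' is fine but unnecessary.
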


\bigskip

\begin{proof}[Proof of Proposition~\ref{prop:gro}]
We show that $\calG_{n_1,\ldots,n_k}$ satisfies the condition (ii) in Lemma~\ref{lem_gro}.
Let $u$ be a monomial in $T$.
Then, $u$ can be written by 
$$x_{{\bf 0}}^ax_{\eb_1}x_{\eb_2}\cdots x_{\eb_s}x_{\ib_{11},\ldots,\ib_{1k}}\cdots x_{\ib_{t1},\ldots,\ib_{tk}},$$
where $a\in \ZZ_{\ge 0}$, $\eb_1,\ldots,\eb_s \in \calB_d$ and $\ib_{1p},\ldots, \ib_{tp} \in \calB_{n_p}$ for each $p\in [k]$.
Let $M_p=\{\eb_1,\ldots,\eb_s\} \cap \calB_{n_p}$ and $N_p=|M_p|$ for $p\in [k]$.
Then, if $u \notin \ini_{<}(\calG_{n_1,\ldots,n_k})$, we can see that:

\begin{itemize}
\item[(a)] there exists $p \in[k]$ such that $M_p=\emptyset$ from (b1);
\item[(b)] for each $p\in [k]$, we have $\eb_i \le \ib_{lp}$ for any $\eb\in M_p$ and for any $l\in [t]$ from (b2);
\item[(c)] $\ib_{1p} \le \ib_{2p} \le \cdots \le \ib_{tp}$ for all $p\in [k]$ by permuting $x_{\ib_{11},\ldots,\ib_{1k}},\ldots,x_{\ib_{t1},\ldots,\ib_{tk}}$ from (b3).
\end{itemize} 

Let $\phi_{P_{n_1\ldots,n_k}}(u)=t_1^{r_1}t_2^{r_2}\cdots t_d^{r_d} t_0^{r_0}$. 
Now, we may assume that $M_1=\emptyset$.
Then, we can see that $t=r_1+r_2+\cdots +r_{n_1}$.
Similarly, $N_p$ and $a$ can also be represented by $r_i$'s. 
Moreover, it follows from (b), (c) that $\eb_i$ and $\ib_{1p},\ldots,\ib_{tp}$ can be determined uniquely from $r_0,\ldots,r_d$.
Therefore, we can recover $u$ from $t_1^{r_1}t_2^{r_2}\cdots t_d^{r_d} t_0^{r_0}$. This is equivalent to (ii) in Lemma~\ref{lem_gro}.
\end{proof}

Now, we give the main theorem in this section.

\begin{thm}\label{thm:new_pro}
The $(0,1)$-polytope $P_{n_1\ldots,n_k}$ has the following properties:
\begin{itemize}
\item[(i)] 
$P_{n_1,\ldots,n_k}$ has IDP.
\item[(ii)] $\kk[P_{n_1,\ldots,n_k}]$ is Gorenstein if and only if $n_1=n_2=\cdots =n_k$.
\item[(iii)] $\Cl(\kk[P_{n_1,\ldots,n_k}]) \cong \ZZ^{k-1}$.
\item[(iv)] If $k\ge 3$, then $\kk[P_{n_1,\ldots,n_k}]\notin {\bf Order}_{k-1}$ for any $n_1,\ldots,n_k\in \ZZ_{>0}$.
\end{itemize}
\end{thm}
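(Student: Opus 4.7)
My plan is to verify the four assertions in turn, relying on the explicit facet description of Proposition~\ref{prop:facet} and the \Gro basis of Proposition~\ref{prop:gro}. For (i), every initial monomial of a binomial in $\calG_{n_1,\ldots,n_k}$ is squarefree, so Proposition~\ref{prop:gro} gives a squarefree initial ideal for $I_{P_{n_1,\ldots,n_k}}$, which yields a regular unimodular triangulation of $P_{n_1,\ldots,n_k}$ and hence its normality; the inclusion $\{{\bf 0}\}\cup \calB_d\subset V_{n_1,\ldots,n_k}$ gives $\ZZ\calA(P_{n_1,\ldots,n_k})=\ZZ^{d+1}$, upgrading normality to the IDP. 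For (iii), Proposition~\ref{prop:facet} yields $|\Psi(P_{n_1,\ldots,n_k})|=d+k$, hence $\rank P_{n_1,\ldots,n_k}=k-1$. To see that $\Cl(\kk[P_{n_1,\ldots,n_k}])$ is torsion-free, I would apply Theorem~\ref{main1} with the sequence $(\vb_i,F_i)=(\eb_i,F_{\eb_i})$ for $i=1,\ldots,d$ followed by $(\vb_{d+1},F_{d+1})=({\bf 0},F_{\fb_1})$: each $\eb_i$ lies on $F_{\eb_1}\cap\cdots\cap F_{\eb_{i-1}}$ and satisfies $d_{F_{\eb_i}}(\eb_i)=1$, while ${\bf 0}$ lies on every $F_{\eb_j}$ and $d_{F_{\fb_1}}({\bf 0})=1$. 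Hence $k_{P_{n_1,\ldots,n_k}}=d+1=\dim P_{n_1,\ldots,n_k}+1$, forcing $\Cl(\kk[P_{n_1,\ldots,n_k}])\cong\ZZ^{k-1}$.

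For (ii), I would use the Gorenstein criterion for normal toric rings: $\kk[P]$ is Gorenstein iff there exists a lattice point $(\vb_0,n_0)\in\ZZ^{d+1}$ in the interior of $\ZZ_{\ge 0}\calA(P)$ with $\langle (\vb_0,n_0),\cb_F\rangle=1$ for every $F\in\Psi(P)$. Plugging in the facet data of Proposition~\ref{prop:facet}, the coordinate-facet conditions $\langle\vb_0,\eb_i\rangle=1$ pin down $\vb_0=(1,\ldots,1)$, after which the conditions from $F_{\fb_p}$ rearrange to $n_0=1+d-(k-1)n_p$ for each $p\in[k]$. Simultaneous solvability is equivalent to $n_1=\cdots=n_k=:n$; in that case $n_0=n+1$, and $({\bf 1},n+1)$ can be written over $\calA(P_{n_1,\ldots,n_k})$ by expressing ${\bf 1}$ as a sum of $n$ elements of $\calB_{n_1,\ldots,n_k}$ via coordinate-label bijections within each block, and then adding $({\bf 0},1)$ to reach the correct height.

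For (iv), my aim is to show that $\kk[P_{n_1,\ldots,n_k}]$ is not defined by quadratic relations as a standard graded $\kk$-algebra, whereas every Hibi ring is. Inspecting $\calG_{n_1,\ldots,n_k}$: the initial monomial $x_{\ib_1}x_{\ib_2}\cdots x_{\ib_k}$ of the degree-$k$ binomial (b1) uses only variables $x_\eb$ with $\eb\in\calB_d$, whereas every initial monomial of a (b2) or (b3) binomial contains at least one variable $x_\vb$ with $\vb\in\calB_{n_1,\ldots,n_k}$. Hence no (b2) or (b3) initial can divide a (b1) initial, and distinct initials within each family are manifestly pairwise non-divisible, so each (b1) initial is a minimal monomial generator of $\ini_<(I_{P_{n_1,\ldots,n_k}})$. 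By the standard \Gro-basis fact that each element of a reduced \Gro basis whose initial term is a minimal monomial generator of $\ini_<(I)$ is itself a minimal generator of $I$ of the same degree, $I_{P_{n_1,\ldots,n_k}}$ admits a minimal generator of degree $k\ge 3$. Since Hibi rings are defined by quadratic Hibi relations, we conclude $\kk[P_{n_1,\ldots,n_k}]\notin{\bf Order}_{k-1}$. The main obstacle is making this last transfer precise and confirming that being defined by quadratic relations is intrinsic to the standard graded $\kk$-algebra structure; both are classical facts but need to be invoked with care.
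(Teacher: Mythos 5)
Parts (i)--(iii) of your proposal are correct and essentially coincide with the paper's proof: (i) is the paper's argument verbatim, (iii) uses the same sequence of points and facets to get $k_{P_{n_1,\ldots,n_k}}=d+1$ and then Theorem~\ref{main1} together with the facet count $|\Psi(P_{n_1,\ldots,n_k})|=d+k$, and (ii) is the same computation as the paper's, merely phrased through the criterion ``Gorenstein iff some lattice point of the semigroup evaluates to $1$ on every support form'' instead of the paper's explicit translation of the canonical module; both reduce to the simultaneous solvability of $n_0=1+d-(k-1)n_p$ for all $p$.

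The gap is in (iv). The ``standard \Gro-basis fact'' you invoke --- that an element of a reduced \Gro basis whose initial term is a minimal monomial generator of $\ini_<(I)$ is a minimal generator of $I$ of the same degree --- is false. In a reduced \Gro basis \emph{every} element has this property (that is precisely what reducedness gives), so your fact would assert that every reduced \Gro basis is a minimal generating set. The true inequality runs the other way, $\beta_{0,e}(I)\le\beta_{0,e}(\ini_<(I))$, so a high-degree minimal generator of the initial ideal need not be witnessed by a high-degree minimal generator of $I$. Concretely, $I=(x^2,\,xy+y^2)\subset\kk[x,y]$ is generated in degree $2$, yet its reduced lexicographic \Gro basis is $\{x^2,\,xy+y^2,\,y^3\}$ and $y^3$ is a minimal generator of $\ini_<(I)=(x^2,xy,y^3)$; the same phenomenon occurs for toric ideals (any quadratically generated toric ideal admitting no quadratic \Gro basis). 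Hence from ``$\ini_<(I_{P_{n_1,\ldots,n_k}})$ has a minimal generator in degree $k$'' you cannot conclude that $I_{P_{n_1,\ldots,n_k}}$ does. What is needed is an argument on the ideal itself: the paper shows that $x_{\ib_1}\cdots x_{\ib_k}-x_{\bf 0}^{k-1}x_{\ib_1,\ldots,\ib_k}$ cannot be written as $\sum_i\alpha_i\xb^{\wb_i}f_i$ with each $f_i$ of type (b2) or (b3), because every term of every such $f_i$ involves a variable indexed by $\calB_{n_1,\ldots,n_k}$ while the monomial $x_{\ib_1}\cdots x_{\ib_k}$ involves none; equivalently, the fiber of the multidegree of $x_{\ib_1}\cdots x_{\ib_k}$ consists of exactly the two monomials $x_{\ib_1}\cdots x_{\ib_k}$ and $x_{\bf 0}^{k-1}x_{\ib_1,\ldots,\ib_k}$, so this degree-$k$ binomial is indispensable. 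Your remaining reductions in (iv) --- that Hibi rings are defined by quadrics and that the degrees of a minimal generating system are intrinsic to the standard graded algebra --- are fine.
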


\begin{proof}
(i) It follows from Proposition~\ref{prop:gro} that the initial ideal of $I_{n_1,\ldots,n_k}$ is squarefree, and hence $P_{n_1,\ldots,n_k}$ possesses a regular unimodular triangulation (cf. \cite[Theorem~4.17]{HHO}).
This implies that $P_{n_1,\ldots,n_k}$ is normal. 
Moreover, $\ZZ \calA(P_{n_1,\ldots,n_k})$ coincides with $\ZZ^{d+1}$ since $\{{\bf 0}\}\cup\calB_d\subset P_{n_1,\ldots,n_k}$, and hence $P_{n_1,\ldots,n_k}$ has IDP.

(ii) Since $\kk[P_{n_1,\ldots,n_k}]$ is a normal affine semigroup ring, the canonical module $\omega_{\kk[P_{n_1,\ldots,n_k}]}$ is isomorphic to the module generated by all monomials whose exponent vector is a lattice point in 
$\Bigl(\bigcap_{\eb\in \calB_d}H^+((\eb,0);1)\Bigr)\cap \Bigl(\bigcap_{p\in [k]}H^+((\fb_p,1);1)\Bigr)$.
By the parallel translation by $(1,\ldots,1,\alpha)$ for some integer $\alpha$, we can see that it is also isomorphic to the module generated by all monomials whose exponent vector is a lattice point in 
$\Bigl(\bigcap_{\eb\in \calB_d}H^+((\eb,0);0)\Bigr)\cap \Bigl(\bigcap_{p\in [k]}H^+((\fb_p,1);m_p)\Bigr)$,
where $m_p=1-\alpha-(k-2)n_p+\sum_{q\in[k]\setminus \{p\}} n_q$.

Thus, if $\kk[P_{n_1,\ldots,n_k}]$ is Gorenstein, then the equality $m_1=\cdots=m_k=0$ must hold.
This implies $n_1=\cdots=n_k$.
Conversely, if $n_1,\ldots,n_k$ are equal, then we can see that $\omega_{\kk[P_{n_1,\ldots,n_k}]}$ is isomorphic to $\kk[P_{n_1,\ldots,n_k}]$ by setting $\alpha=n_1+1$, that is, $\kk[P_{n_1,\ldots,n_k}]$ is Gorenstein.

(iii) Let $\vb_1,\ldots,\vb_d$ be vectors in $\ZZ^d$ satisfying $\{\vb_1,\ldots,\vb_d\}=\calB_d$ and let $\vb_{d+1}={\bf 0}$.
Moreover, for $i\in [d]$, let $F_i$ be the facet of $P_{n_1,\ldots,n_k}$ defined by  $H(\vb_i;0)$ and let $F_{d+1}$ be the facet of $P_{n_1,\ldots,n_k}$ defined by $H(\fb_1;1)$.
Then, we can see that these sequences satisfy the statement ($*$) and we have $k_{P_{n_1,\ldots,n_k}}=d+1$.
Therefore, we have $\Cl(\kk[P_{n_1,\ldots,n_k}])\cong \ZZ^{k-1}$ from Theorem~\ref{main1}.

(iv) Since $\calG_{n_1,\ldots,n_k}$ is a \Gro basis, the binomials (b1), (b2) and (b3) generate $I_{P_{n_1,\ldots,n_k}}$.
Consider a minimal generators $\calG'$ of $I_{P_{n_1,\ldots,n_k}}$ contained these binomials.
If $\calG'$ does not contain any binomial $x_{\ib_1}x_{\ib_2} \cdots x_{\ib_k}-x_{{\bf 0}}^{k-1}x_{\ib_1,\ldots,\ib_k}$ in (b1), then there exists an expression:
$$x_{\ib_1}x_{\ib_2} \cdots x_{\ib_k}-x_{{\bf 0}}^{k-1}x_{\ib_1,\ldots,\ib_k}=\sum_{i=1}^s \alpha_i \xb^{\wb_i} f_i,$$
where $\alpha_i\in \ZZ$, $\xb^{\wb_i}$ is a monomial of the polynomial ring $\kk[x_{\vb} : \vb \in P\cap \ZZ^d]$ and $f_i$'s are binomials in (b2) or (b3) (cf. \cite[Lemma~3.7]{HHO}).
However, it is impossible because the variable $x_{\ib_1,\ldots,\ib_k}$ for some $(\ib_1,\ldots,\ib_k)\in V_{n_1,\ldots,n_k}$ must appear in the terms of $f_i$.
Therefore, $\calG'$ contains $x_{\ib_1}x_{\ib_2} \cdots x_{\ib_k}-x_{{\bf 0}}^{k-1}x_{\ib_1,\ldots,\ib_k}$ for some $\ib_1,\ldots,\ib_k$. 
This implies that $I_{n_1,\ldots}$ cannot be generated by quadratic binomials.

On the other hand, the toric ideals of Hibi rings are generated by quadratic binomials (\cite{H87}).
Thus, $\kk[P_{n_1,\ldots,n_k}]$ is not isomorphic to any Hibi ring.
\end{proof}

Finally, we compute weights of $\kk[P_{n_1,\ldots,n_k}]$.
Let $F_{\eb}$ denote the facet $P_{n_1,\ldots,n_k}\cap H(\eb;0)$ for $\eb\in \calB_d$ and let $F_i$ denote the facet $P_{n_1,\ldots,n_k}\cap H(\fb_p;1)$ for $p\in [k]$.
From (\ref{groupS}) and Theorem~\ref{thm:new_pro} (i), the following elements in $\calF$ belong to $\calS$:
\begin{align}\label{weight_ibp}
\sum_{F\in \Psi(P_{n_1,\ldots,n_k})}\ang{(\ib_p,0),\cb_F}\epsilon_F=
\epsilon_{F_{\ib_p}}+(k-2)\epsilon_{F_p} - \sum_{q\in [k]\setminus \{p\}}\epsilon_{F_q}
\end{align}
for each $p\in [k]$ and $\ib_p\in \calB_{n_p}$ and
\begin{align}\label{weight_01}
\sum_{F\in \Psi(P_{n_1,\ldots,n_k})}\ang{({\bf 0},1),\cb_F}\epsilon_F=\sum_{p\in [k]}\epsilon_{F_p}.
\end{align}
We consider the map $\iota : \calF/\calS \to \ZZ^{k-1}$; let 
$\iota(\epsilon_{F_i})=\eb_i$ for $i\in [k-1]$, where $\eb_i$ denotes the $i$-th unit vector of $\ZZ^{k-1}$.
This induces an isomorophism $\iota : \calF/\calS \to \ZZ^{k-1}$ and we can calculate the remaining weight from (\ref{weight_ibp}) and (\ref{weight_01}):
\begin{align*}
\iota(\epsilon_{F_k})&=-(\eb_1+\cdots+\eb_{k-1}); \\
\iota(\epsilon_{F_{\ib_p}})&=-(k-1)\eb_p \quad \text{ for each $p\in [k-1]$ and any $\ib_p\in \calB_{n_p}$}; \\
\iota(\epsilon_{F_{\ib_k}})&=(k-1)(\eb_1+\cdots+\eb_{k-1}) \quad \text{ for any $\ib_k\in \calB_{n_k}$}.
\end{align*}
In particular, we can get the weights of the case $k=3$ as follows:
\begin{align}\label{weight_new}
(1,0), \quad (0,1),\quad  (-1,-1),\quad (-2,0)\times n_1, \quad (0,-2)\times n_2,\quad (2,2)\times n_3.
\end{align}


\section{Answers to the problems}\label{sec:ans}
Throughout this section, let $P\subset \RR^d$ be a $(0,1)$-polytope.
We give complete or partial answers to Problems~\ref{p:normal}, \ref{p:torsion}, \ref{p:class} and \ref{p:comb} in each case;
\begin{center}
(r1) \; $\rank P=0$ or $1$, \quad \quad  (r2) \; $\rank P=2$, \quad \quad  (r3) \; $\rank P\ge 3$.
\end{center}

\subsection{Case (r1)}

First, we discuss the case $\rank P=0$, this case is trivial.
Notice that $P$ has rank $0$ if and only if $P$ is a simplex.
Moreover, it is known that the toric ring of $d$-dimensional $(0,1)$-simplex is isomorphic to the polynomial ring with $d+1$ variables over $\kk$ (cf. \cite[Lemma 3.1.5]{Villa}).

Clearly, polynomial rings are normal and their divisor class groups are torsionfree.
Moreover, for two $(0,1)$-simplices $P$ and $P'$, one has $\kk[P]\cong \kk[P']$ if and only if $P$ and $P'$ are combinatorially equivalent (equivalently, they have the same number of vertices).
Therefore, we get the following theorem:

\begin{prop}\label{main:rank0}
All $(0,1)$-polytopes with rank $0$ are normal and torsionfree.
Moreover, the following relationship holds:
$${\bf (0,1)}_0={\bf Order}_0=\{\kk[x_1,\ldots,x_k ] : k\in \ZZ_{>0}\}.$$
Furthermore, for two $(0,1)$-polytopes $P_1$ and $P_2$ with $\rank P_i=0$, they have the same combinatorial type if and only if $\kk[P_1]\cong \kk[P_2]$.
\end{prop}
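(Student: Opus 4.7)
The plan is to reduce everything to the observation that a $(0,1)$-polytope of rank $0$ is a $(0,1)$-simplex, whose toric ring is a polynomial ring. First I would note that $\rank P = 0$ forces $|\Psi(P)| = \dim P + 1$, and a polytope with exactly $\dim P+1$ facets is necessarily a simplex. Then the identification $\kk[P]\cong \kk[x_1,\ldots,x_{d+1}]$ (for $d=\dim P$), recorded in \cite[Lemma~3.1.5]{Villa} just before the statement, immediately gives both normality and torsionfreeness, since polynomial rings are regular and have trivial divisor class group.

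For the identification ${\bf (0,1)}_0 = {\bf Order}_0 = \{\kk[x_1,\ldots,x_k]: k\in \ZZ_{>0}\}$, the inclusion ${\bf (0,1)}_0 \subset \{\kk[x_1,\ldots,x_k] : k\in \ZZ_{>0}\}$ has just been established, and the reverse direction requires producing witnesses. For each $k\ge 1$, I would take the order polytope $\calO_\Pi$ of the chain $\Pi$ on $k-1$ elements: the Hasse diagram of $\widehat{\Pi}$ is a single chain with $k$ edges, so by (\ref{eq:Hibi}) the divisor class group of the Hibi ring has rank $0$ and $\kk[\Pi]\cong \kk[x_1,\ldots,x_k]$. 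This furnishes a representative in ${\bf Order}_0 \subset {\bf (0,1)}_0$ for every $k$, closing the chain of equalities.

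The combinatorial criterion is then immediate: any two $d$-simplices are combinatorially equivalent, so two $(0,1)$-simplices $P_1$ and $P_2$ have the same combinatorial type iff $\dim P_1 = \dim P_2$, iff their polynomial toric rings involve the same number of variables, iff $\kk[P_1]\cong \kk[P_2]$. The main obstacle here is essentially absent; each piece is either the cited lemma or a one-line observation, and the only care required is exhibiting the chain-poset witness so that the ${\bf Order}_0$ side of the equality is populated for every $k$.
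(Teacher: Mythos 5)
Your proposal is correct and follows essentially the same route as the paper: rank $0$ forces $P$ to be a simplex, the cited lemma identifies $\kk[P]$ with a polynomial ring, and normality, torsionfreeness, and the combinatorial criterion all follow immediately. The only addition is your explicit chain-poset witness for ${\bf Order}_0$, which the paper leaves implicit via its earlier remark that $\rank\calO_\Pi=0$ exactly when $\Pi$ is a chain.
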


\bigskip

Next, we discuss the case $\rank P=1$.
This case has been investigated in \cite{M} as follows:


\begin{thm}[{\cite[Theorem~3.7]{M}}]\label{thm:M}
The following conditions are equivalent:
\begin{itemize}
\item[(i)] $P$ has rank $1$, that is, $P$ has just $\dim P+2$ facets;
\item[(ii)] $\kk[P]$ is isomorphic to the Segre product of two polynomial rings $\kk[x_1,\ldots,x_{n+1}]$ and $\kk[y_1,\ldots,y_{m+1}]$ for some $n,m \in \ZZ_{>0}$ or its polynomial extension;
\item[(iii)] $P$ is normal and $\Cl(\kk[P]) \cong \ZZ$.
\end{itemize}
\end{thm}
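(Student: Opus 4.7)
The plan is to prove the three conditions cyclically: (ii) $\Rightarrow$ (iii) $\Rightarrow$ (i) $\Rightarrow$ (ii).

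For (ii) $\Rightarrow$ (iii), the Segre product $\kk[x_1,\ldots,x_{n+1}]\#\kk[y_1,\ldots,y_{m+1}]$ is the toric ring of the product of two $(0,1)$-simplices $\conv(\{{\bf 0}\}\cup\calB_n)\times\conv(\{{\bf 0}\}\cup\calB_m)$, a simple normal $(0,1)$-polytope of dimension $n+m$ with exactly $n+m+2$ facets, so Theorem~\ref{thm:class} yields $\Cl\cong\ZZ$. Passing to a polynomial extension corresponds to iterated pyramids by Proposition~\ref{pyramid}, preserving both properties. For (iii) $\Rightarrow$ (i), Theorem~\ref{thm:class} shows that for normal $P$ one has $\rank\Cl(\kk[P])=|\Psi(P)|-\dim P -1=\rank P$, so $\Cl(\kk[P])\cong\ZZ$ forces $\rank P=1$.

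The main work is (i) $\Rightarrow$ (ii). First I would reduce via Proposition~\ref{pyramid}: if $P$ is a pyramid with base $Q$, then $\kk[P]$ is a polynomial extension of $\kk[Q]$, and a direct count shows $\rank Q=\rank P=1$, so by induction on $\dim P$ we may assume that $P$ is not a pyramid.

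Next, I would show that a non-pyramidal rank-$1$ $(0,1)$-polytope $P$ is necessarily simple by analyzing the standard Gale-diagram of the dual polytope $P^{\circ}$. Since $P$ has $d+2$ facets, $P^{\circ}$ has $d+2$ vertices and its Gale-transform lies in $\RR^{1}$. A vertex of $P^{\circ}$ has zero Gale coordinate precisely when it is the apex of a pyramidal structure on $P^{\circ}$, and by standard polar duality this occurs if and only if $P$ itself is a pyramid. Having already reduced all pyramids, every Gale coordinate is nonzero, and the standard Gale-diagram of $P^{\circ}$ consists of $a$ points at $+1$ and $b$ points at $-1$ with $a+b=d+2$ and $a,b\ge 2$. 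By the classical classification of $d$-polytopes with $d+2$ vertices through one-dimensional Gale-diagrams (see \cite[Section 6.1]{Gr}), $P^{\circ}$ is combinatorially the free sum of two simplices, so $P$ is combinatorially a product of two simplices; in particular $P$ is simple. Proposition~\ref{Simple} then identifies $P$ (as a lattice polytope) with a product of $(0,1)$-simplices, and the identity $\rank(P_{1}\times P_{2})=\rank P_{1}+\rank P_{2}+1$ combined with $\rank P=1$ forces exactly two factors. Hence $\kk[P]$ is isomorphic to the Segre product of two polynomial rings, which completes the argument.

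The main obstacle is the second step: one must carefully justify the correspondence between vanishing Gale coordinates of $P^{\circ}$ and pyramidal structure of $P$, and invoke the Gale classification to conclude simplicity before applying Proposition~\ref{Simple}. Everything else reduces to bookkeeping with Theorem~\ref{thm:class}, Proposition~\ref{pyramid} and the rank formula for products.
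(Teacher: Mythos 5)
The paper does not actually prove this statement: it is imported verbatim as \cite[Theorem~3.7]{M}, so there is no in-paper argument to compare yours against. Judged on its own, your cyclic proof is essentially correct and fits naturally into the toolkit the paper sets up. The implication (iii)~$\Rightarrow$~(i) is immediate from the rank formula in Theorem~\ref{thm:class}, and your main step (i)~$\Rightarrow$~(ii) is sound: the reduction to the non-pyramidal case via Proposition~\ref{pyramid} preserves rank (a pyramid adds one facet and one dimension), the vanishing of a Gale coordinate of a vertex of the dual is exactly the pyramid condition on the dual, which by polarity is equivalent to $P$ being a pyramid, and a one-dimensional Gale diagram with all points nonzero and at least two on each side of the origin forces the dual to be a free sum of two simplices, hence $P$ simple; Proposition~\ref{Simple} and the identity $\rank(P_1\times P_2)=\rank P_1+\rank P_2+1$ then finish the job.

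The one place you should tighten is (ii)~$\Rightarrow$~(iii). Theorem~\ref{thm:class} by itself only gives $\Cl(\kk[P])\cong\ZZ^{1}\oplus\ZZ/s_1\ZZ\oplus\cdots\oplus\ZZ/s_m\ZZ$; it does not rule out torsion, so the conclusion $\Cl(\kk[P])\cong\ZZ$ needs an extra word. The cheapest fix inside this paper's framework is to note that the Segre product of two polynomial rings is the Hibi ring of a disjoint union of two chains (Proposition~\ref{p_sum}), whence $\Cl\cong\ZZ$ by \eqref{eq:Hibi}; alternatively, verify $k_P=\dim P+1$ for a product of two $(0,1)$-simplices and invoke Theorem~\ref{main1}. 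A second, very minor point: in the pyramid induction you should observe that the base $Q$ of a $(0,1)$-pyramid is again a polytope with $(0,1)$-vertices (of lower dimension, not full-dimensional), so that the inductive hypothesis genuinely applies to it. Neither issue affects the architecture of the argument.
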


Therefore, $(0,1)$-polytopes which have rank $1$ are normal and torsionfree.
Moreover, the equivalence (i) and (ii) imply that for two $(0,1)$-polytopes $P$ and $P'$ with $\rank P=\rank P'=1$, one has $\kk[P]\cong \kk[P']$ if and only if $P$ and $P'$ are combinatorially equivalent.
Hence, the following theorem holds:
\begin{thm}\label{main:rank1}
All $(0,1)$-polytopes with rank $1$ are normal and torsionfree.
Moreover, the following relationship holds:
$${\bf (0,1)}_1={\bf Order}_1=\{(\kk[x_1,\ldots,x_{n+1}]\#\kk[y_1,\ldots,y_{m+1}])\otimes_{\kk}\kk[z_1,\ldots,z_{l-1}] : n,m,l\in \ZZ_{>0}\}.$$
Furthermore, for two $(0,1)$-polytopes $P_1$ and $P_2$ with $\rank P_i=1$, they have the same combinatorial type if and only if $\kk[P_1]\cong \kk[P_2]$.
\end{thm}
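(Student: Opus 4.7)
The plan is to deduce Theorem~\ref{main:rank1} from Theorem~\ref{thm:M} together with the classification of rank-$1$ order polytopes recalled earlier in the excerpt, and then to pin down the combinatorial type of $P$ by combining Proposition~\ref{Simple} and Proposition~\ref{pyramid}.

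Normality and torsionfreeness are immediate from the implication (i) $\Rightarrow$ (iii) of Theorem~\ref{thm:M}: if $\rank P = 1$, then $P$ is normal and $\Cl(\kk[P]) \cong \ZZ$, which is torsionfree. For the displayed equality, the inclusion ${\bf (0,1)}_1 \subseteq \{(\kk[x_1,\ldots,x_{n+1}] \# \kk[y_1,\ldots,y_{m+1}]) \otimes_{\kk} \kk[z_1,\ldots,z_{l-1}] : n,m,l \in \ZZ_{>0}\}$ is the implication (i) $\Rightarrow$ (ii) of Theorem~\ref{thm:M}, and ${\bf Order}_1 \subseteq {\bf (0,1)}_1$ is trivial. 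For the reverse inclusion, I will invoke the earlier classification: up to polynomial extension, $\rank \calO_\Pi = 1$ exactly when $\Pi$ is the disjoint union of two chains. Proposition~\ref{p_sum}(i) identifies the Hibi ring of this disjoint union with the Segre product of the two chain Hibi rings (each a polynomial ring), and Proposition~\ref{p_sum}(ii) accounts for the additional tensor factor $\kk[z_1,\ldots,z_{l-1}]$ via ordinal sum with a chain. Ranging over all triples $(n,m,l)$ then realizes every element of the displayed set as a Hibi ring of rank $1$.

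For the final equivalence, I will show that the triple $(n,m,l)$ --- with the pair $\{n,m\}$ unordered, since the Segre product is symmetric --- is a common invariant of both sides. On the ring side this is tautological from the description of the toric ring. On the polytope side, Proposition~\ref{pyramid} interprets the tensor factor $\kk[z_1,\ldots,z_{l-1}]$ as $l-1$ iterated pyramid operations, so $P$ is combinatorially an $(l-1)$-fold pyramid over some $(0,1)$-base $Q$ (the apices being recognisable as the vertices meeting all but one facet). Stripping off these apices, $Q$ is simple of rank $1$, whence by Proposition~\ref{Simple} a product of two simplices $\Delta_n \times \Delta_m$. Thus $P$ is determined up to combinatorial equivalence by $(n,m,l)$, and both directions of the ``iff'' follow. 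The main obstacle I anticipate is this combinatorial identification of $P$ as an iterated pyramid over a product of simplices; once that is in place, the rest is formal.
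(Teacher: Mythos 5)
Your treatment of normality, torsionfreeness and the displayed set equality is exactly the paper's: everything is read off from Theorem~\ref{thm:M}, with the reverse inclusion coming from the classification of rank-$1$ order polytopes (disjoint unions of two chains, up to polynomial extension) and Proposition~\ref{p_sum}. The divergence is in the final ``same combinatorial type iff isomorphic toric rings'' claim. The paper disposes of this in one line by appealing to the equivalence of (i) and (ii) in Theorem~\ref{thm:M} (in effect deferring to the proof in \cite{M}, which pins down the combinatorial type), whereas you attempt to supply the identification yourself. Your target statement --- that a rank-$1$ $(0,1)$-polytope is an iterated pyramid over a product of two simplices, so its combinatorial type is determined by the unordered pair $\{n,m\}$ together with $l$, and that the same data is read off the ring --- is the right one.

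The one step that does not work as written is the appeal to Proposition~\ref{pyramid}: that proposition says a $(0,1)$-pyramid has a toric ring which is a polynomial extension of that of its base, and you are invoking the converse (``the tensor factor $\kk[z_1,\ldots,z_{l-1}]$ \emph{is} $l-1$ pyramid operations''), which is neither what is stated nor automatic from a graded ring isomorphism. You flag this as the anticipated obstacle, and the repair is purely combinatorial, needing no ring input: if $P$ has $\dim P+2$ facets and is not simple, some vertex $v$ lies on at least $\dim P+1$ facets; it cannot lie on all of them (a vertex is the intersection of the facets through it, so any other vertex $w$ would then satisfy $v\in\{w\}$), hence $v$ lies on every facet except one, say $F$; and then every vertex $w\neq v$ lies on $F$ (otherwise all facets through $w$ contain $v$, forcing $w=v$ again), so $P$ is a pyramid with apex $v$ and base $F$, which is again a $(0,1)$-polytope of rank $1$. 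Iterating until the polytope is simple and then applying Proposition~\ref{Simple} (rank $1$ forcing exactly two nontrivial simplex factors, since a product of $k$ simplices has rank $k-1$) yields $\Delta_n\times\Delta_m$, after which your argument closes. With that insertion the proposal is complete and, for this last part, somewhat more self-contained than the paper's own one-line justification.
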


\bigskip


\subsection{Case (r2)}

By Proposition~\ref{pyramid}, in what follows, we may assume that $P$ is not pyramidal.

By Theorem~\ref{thm:weights_Gale}, we can obtain the standard Gale-diagrams of the dual polytopes of the order polytopes which have rank 2 and $P_{n_1,n_2,n_3}$.
We draw the standard Gale-diagrams as follows; the dual polytopes of $\calO_{\Pi_1},\calO_{\Pi_2},\calO_{\Pi_3},\calO_{\Pi_4}$ and $P_{n_1,n_2,n_3}$ correspond to the standard Gale-diagrams $\Gale_1,\Gale_2,\Gale_3,\Gale_4$ and $\Gale_5$, respectively.


\begin{figure}[h]
{\scalebox{0.8}{
\begin{minipage}{0.50\columnwidth}
\centering
{\scalebox{0.9}{
\begin{tikzpicture}[line width=0.05cm]

\coordinate (L1) at (0,2); \coordinate (L2) at (0,4); \coordinate (L3) at (0,0); 
\coordinate (L4) at (-1.732,3.0); \coordinate (L5) at (1.732,1.0); 
\coordinate (L6) at (1.732,3.0); \coordinate (L7) at (-1.732,1.0);

\draw (L2)--(L3); 
\draw (L4)--(L5); 
\draw (L6)--(L7);

\draw [line width=0.05cm] (L1) circle [radius=2.0];
\draw [line width=0.05cm, fill=white] (L2) circle [radius=0.15] node[above] {\Large $n_1$};
\draw [line width=0.05cm, fill=white] (L5) circle [radius=0.15] node[below right] {\Large $n_3$}; 
\draw [line width=0.05cm, fill=white] (L7) circle [radius=0.15] node[below left] {\Large $n_2$}; 


\end{tikzpicture}
}}
\caption{\; \\The Gale-diagram $\Gale_1$}
\label{type1}
\end{minipage}
\begin{minipage}{0.50\columnwidth}
\centering
{\scalebox{0.9}{
\begin{tikzpicture}[line width=0.05cm]

\coordinate (L1) at (0,2); \coordinate (L2) at (0,4); \coordinate (L3) at (0,0); 
\coordinate (L4) at (2,2); \coordinate (L5) at (-2,2);

\draw (L2)--(L3); 
\draw (L4)--(L5); 

\draw [line width=0.05cm] (L1) circle [radius=2.0];
\draw [line width=0.05cm, fill=white] (L2) circle [radius=0.15] node[above] {\Large $n_1$};
\draw [line width=0.05cm, fill=white] (L3) circle [radius=0.15] node[] at (0,-0.4) {\Large $n_2$};
\draw [line width=0.05cm, fill=white] (L4) circle [radius=0.15] node[right] {\Large $m_2$};
\draw [line width=0.05cm, fill=white] (L5) circle [radius=0.15] node[left] {\Large $m_1$};

\end{tikzpicture}
}}
\caption{\; \\The Gale-diagram $\Gale_2$}
\label{type2}
\end{minipage}
}}
\end{figure}


\begin{figure}[h]
{\scalebox{0.8}{
\begin{minipage}{0.50\columnwidth}
\centering
{\scalebox{0.9}{
\begin{tikzpicture}[line width=0.05cm]

\coordinate (L1) at (0,2); \coordinate (L2) at (0,4); \coordinate (L3) at (0,0); 
\coordinate (L4) at (-1.732,3.0); \coordinate (L5) at (1.732,1.0); 
\coordinate (L6) at (1.732,3.0); \coordinate (L7) at (-1.732,1.0);

\draw (L2)--(L3); 
\draw (L4)--(L5); 
\draw (L6)--(L7);

\draw [line width=0.05cm] (L1) circle [radius=2.0];
\draw [line width=0.05cm, fill=white] (L2) circle [radius=0.15] node[above] {\Large $n_1$};
\draw [line width=0.05cm, fill=white] (L4) circle [radius=0.15] node[above left] {\Large $m_1$};
\draw [line width=0.05cm, fill=white] (L5) circle [radius=0.15] node[below right] {\Large $n_3$}; 
\draw [line width=0.05cm, fill=white] (L7) circle [radius=0.15] node[below left] {\Large $n_2$}; 


\end{tikzpicture}
}}
\caption{\; \\The Gale-diagram $\Gale_3$}
\label{type3}
\end{minipage}
\begin{minipage}{0.50\columnwidth}
\centering
{\scalebox{0.9}{
\begin{tikzpicture}[line width=0.05cm]

\coordinate (L1) at (0,2); \coordinate (L2) at (0,4); \coordinate (L3) at (0,0); 
\coordinate (L4) at (-1.732,3.0); \coordinate (L5) at (1.732,1.0); 
\coordinate (L6) at (1.732,3.0); \coordinate (L7) at (-1.732,1.0);

\draw (L2)--(L3); 
\draw (L4)--(L5); 
\draw (L6)--(L7);

\draw [line width=0.05cm] (L1) circle [radius=2.0];
\draw [line width=0.05cm, fill=white] (L2) circle [radius=0.15] node[above] {\Large $n_1$};
\draw [line width=0.05cm, fill=white] (L3) circle [radius=0.15] node[] at (0,-0.4) {\Large $n_2$};
\draw [line width=0.05cm, fill=white] (L4) circle [radius=0.15] node[above left] {\Large $l_1$};
\draw [line width=0.05cm, fill=white] (L5) circle [radius=0.15] node[below right] {\Large $l_2$}; 
\draw [line width=0.05cm, fill=white] (L7) circle [radius=0.15] node[below left] {\Large $m_1$};

\end{tikzpicture}
}}
\caption{\; \\The Gale-diagram $\Gale_4$}
\label{type4}
\end{minipage}
}}
\end{figure}


\begin{figure}[h]
{\scalebox{0.8}{
\begin{minipage}{1.0\columnwidth}
\centering
{\scalebox{0.9}{
\begin{tikzpicture}[line width=0.05cm]

\coordinate (L1) at (0,2);
\coordinate (L2) at (0,4); \coordinate (L3) at (0,0); 
\coordinate (L4) at (-1.732,3.0); \coordinate (L5) at (1.732,1.0); 
\coordinate (L6) at (1.732,3.0); \coordinate (L7) at (-1.732,1.0);

\draw (L2)--(L3); 
\draw (L4)--(L5); 
\draw (L6)--(L7);

\draw [line width=0.05cm] (L1) circle [radius=2.0];
\draw [line width=0.05cm, fill=white] (L2) circle [radius=0.15] node[above] {\Large $n_1$};
\draw [line width=0.05cm, fill=white] (L3) circle [radius=0.15] node[] at (0,-0.4) {\Large $1$};
\draw [line width=0.05cm, fill=white] (L4) circle [radius=0.15] node[above left] {\Large $1$};
\draw [line width=0.05cm, fill=white] (L5) circle [radius=0.15] node[below right] {\Large $n_3$}; 
\draw [line width=0.05cm, fill=white] (L6) circle [radius=0.15] node[above right] {\Large $1$}; 
\draw [line width=0.05cm, fill=white] (L7) circle [radius=0.15] node[below left] {\Large $n_2$};

\end{tikzpicture}
}}
\caption{\; \\The Gale-diagram $\Gale_5$}
\label{type5}
\end{minipage}
}}
\end{figure}
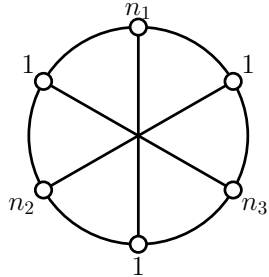

\begin{thm}\label{thm:type1}
The following are equivalent:
\begin{itemize}
\item[(i)] The standard Gale-diagram of the dual polytope of $P$ is orthogonally equivalent to $\Gale_1$.
\item[(ii)] The toric ring of $P$ is isomorphic to the Segre product of three polynomial rings over $\kk$.
\end{itemize}
In particular, let $P_1$ and $P_2$ be two $(0,1)$-polytopes which have a standard Gale-diagram $\Gale_1$, then $P_1$ and $P_2$ are combinatorially equivalent if and only if $\kk[P_1]\cong \kk[P_2]$.
\end{thm}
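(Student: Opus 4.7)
The strategy is to use the Gale-diagram condition to force a product-of-simplices structure on $P$, from which the Segre product decomposition of $\kk[P]$ follows at once; the reverse direction goes through the weights of the Segre product together with Theorem~\ref{thm:weights_Gale}.

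For (i)$\Rightarrow$(ii), in $\Gale_{1}$ no diameter has both endpoints marked, so the dual polytope $P^{\circ}$ is simplicial, and hence $P$ is simple. Proposition~\ref{Simple} then decomposes $P=\Delta^{a_{1}}\times\cdots\times\Delta^{a_{s}}$ as a product of $(0,1)$-simplices. Since $\rank(Q\times Q')=\rank Q+\rank Q'+1$ and $\rank\Delta^{a_{i}}=0$, the condition $\rank P=2$ forces $s=3$. The $a_{j}+1$ facets arising from the $j$-th factor cluster onto one of the three rays of the Gale-diagram, and matching with the multiplicities of $\Gale_{1}$ gives $n_{j}=a_{j}+1$ after reindexing. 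Then
$\kk[P]=\kk[\Delta^{n_{1}-1}]\#\kk[\Delta^{n_{2}-1}]\#\kk[\Delta^{n_{3}-1}]\cong\kk[x_{1},\ldots,x_{n_{1}}]\#\kk[y_{1},\ldots,y_{n_{2}}]\#\kk[z_{1},\ldots,z_{n_{3}}]$.

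For (ii)$\Rightarrow$(i), set $R=\kk[x_{1},\ldots,x_{n_{1}}]\#\kk[y_{1},\ldots,y_{n_{2}}]\#\kk[z_{1},\ldots,z_{n_{3}}]$. By Proposition~\ref{p_sum}(i) together with $\kk[\Delta^{n-1}]\cong\kk[x_{1},\ldots,x_{n}]$, the ring $R$ is the Hibi ring $\kk[\Pi_{1}]$ where $\Pi_{1}$ is the disjoint union of three chains of sizes $n_{i}-1$. The weight table in Section~\ref{subsection:01} records its weights as $(1,0)\times n_{1}$, $(0,1)\times n_{2}$, $(-1,-1)\times n_{3}$. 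Because the multiset of classes in $\Cl(\kk[P])$ of the height-one monomial prime divisors is intrinsic to the toric ring up to the $\GL(2,\ZZ)$-action, the same multiset describes the weights of $\kk[P]$. Applying Theorem~\ref{thm:weights_Gale} and a real change of basis of the relation space that sends the three weight directions to a symmetric $120^{\circ}$ configuration yields the standard Gale-diagram $\Gale_{1}$ for the dual of $P$.

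The ``in particular'' statement is then immediate: combinatorial equivalence of $P_{1},P_{2}$ is orthogonal equivalence of their $\Gale_{1}$ diagrams, which forces the multiplicity multisets $\{n_{1},n_{2},n_{3}\}$ to coincide and yields isomorphic Segre products; conversely an isomorphism of toric rings matches the weight multisets, hence the $\Gale_{1}$ multiplicities, whence combinatorial equivalence of the polytopes. \textbf{The main obstacle} is the bookkeeping step in (i)$\Rightarrow$(ii) that identifies $n_{j}$ with $a_{j}+1$. The cleanest route is to observe that $\calO_{\Pi_{1}}$ equals $\Delta^{n_{1}-1}\times\Delta^{n_{2}-1}\times\Delta^{n_{3}-1}$ with dual Gale-diagram $\Gale_{1}$ (read off from the weight table via Theorem~\ref{thm:weights_Gale}), and to invoke the combinatorial-equivalence characterization of standard Gale-diagrams to pin down any simple $(0,1)$-polytope with dual Gale-diagram $\Gale_{1}$ as having the same multiset of factor dimensions.
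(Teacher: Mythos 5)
Your proposal is correct and follows essentially the same route as the paper: for (i)$\Rightarrow$(ii) you use the simpliciality criterion for standard Gale-diagrams and Proposition~\ref{Simple} to write $P$ as a product of three $(0,1)$-simplices, and for (ii)$\Rightarrow$(i) you realize the Segre product as the Hibi ring of $\Pi_1$ and apply the weight table together with Theorem~\ref{thm:weights_Gale}. Your extra bookkeeping (the rank formula forcing exactly three factors, and the remark that the weight multiset is intrinsic to the isomorphism class of the toric ring) only makes explicit what the paper leaves implicit.
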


\begin{proof}
(i) $\Rightarrow$ (ii) : Since no diameter of the standard Gale-diagram has both endpoints, the dual polytope of $P$ is simplicial, that is, $P$ is simple.
Therefore, $P$ is the product of just three $(0,1)$-simplices from Proposition~\ref{Simple}, and hence $\kk[P]$ is isomorphic to the Segre product of three polynomial rings.

(ii) $\Rightarrow$ (i) : It follows from Proposition~\ref{p_sum} that the Segre product of three polynomial rings can be realized as the Hibi ring of a poset $\Pi_1$.
Its weights have been already given in Section~\ref{subsection:01}.
By Theorem~\ref{thm:weights_Gale}, we can obtain the standard Gale-diagram of the dual polytope of $P$ which is orthogonally equivalent to $\Gale_1$.

The last statement follows from the equivalence (i) and (ii).

\end{proof}

\begin{thm}\label{thm:type2}
The following are equivalent:
\begin{itemize}
\item[(i)] The standard Gale-diagram of the dual polytope of $P$ is orthogonally equivalent to $\Gale_2$.
\item[(ii)] The toric ring of $P$ is isomorphic to $R_1\otimes_{\kk} R_2$, where $R_i$ is the Segre product of two polynomial rings over $\kk$.
\end{itemize}
In particular, let $P_1$ and $P_2$ be two $(0,1)$-polytopes which have a standard Gale-diagram $\Gale_2$, then $P_1$ and $P_2$ are combinatorially equivalent if and only if $\kk[P_1]\cong \kk[P_2]$.
\end{thm}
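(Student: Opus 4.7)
The plan is to follow the strategy of Theorem~\ref{thm:type1}: establish (ii)$\Rightarrow$(i) by realizing $R_1\otimes_{\kk}R_2$ as a concrete Hibi ring whose weights have been tabulated, then invoke Theorem~\ref{thm:weights_Gale}; and establish (i)$\Rightarrow$(ii) by combining the combinatorial rigidity from Section~\ref{subsec:gale} with the invariant-theoretic description of $\kk[P]$ at the end of Section~\ref{subsec:toric}.

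For (ii)$\Rightarrow$(i), I would write $R_1=\kk[x_1,\ldots,x_{m_1}]\#\kk[y_1,\ldots,y_{m_2}]$ and $R_2=\kk[u_1,\ldots,u_{n_1}]\#\kk[v_1,\ldots,v_{n_2}]$. Proposition~\ref{p_sum}(i) identifies each $R_i$ as the Hibi ring of a disjoint union of two chains, and Proposition~\ref{p_sum}(ii) identifies $R_1\otimes_{\kk}R_2$ with the Hibi ring of the ordinal sum of these two disjoint unions---which is precisely the poset $\Pi_2$ of Figure~\ref{poset2} for the appropriate parameters. The weights of $\kk[\Pi_2]$ are listed in the table in Section~\ref{subsection:01} as $(\pm 1,0)$ and $(0,\pm 1)$ with multiplicities $n_1,n_2,m_1,m_2$; applying Theorem~\ref{thm:weights_Gale} and normalizing places them at the four cardinal points of the unit circle, giving the standard Gale-diagram $\Gale_2$ up to orthogonal equivalence.

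For (i)$\Rightarrow$(ii), the fact cited in Section~\ref{subsec:gale} that the standard Gale-diagram determines the combinatorial type of a polytope with $\dim+3$ vertices yields that $P$ is combinatorially equivalent to $\calO_{\Pi_2}$ for the parameters $n_1,n_2,m_1,m_2$ read off $\Gale_2$. To upgrade this combinatorial equivalence to a ring isomorphism, I would read off the weights of $\kk[P]$ directly from $\Gale_2$: the four rays give four clusters of weights, and the $(0,1)$-polytope hypothesis, together with the rank-two constraint, forces these weights to be the primitive lattice vectors $(\pm 1,0),(0,\pm 1)$, matching those of $\kk[\Pi_2]$. The invariant-theoretic identification $\kk[P]=S^G$ then yields $\kk[P]\cong\kk[\Pi_2]\cong R_1\otimes_{\kk}R_2$. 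The last statement of the theorem is then immediate, since both $\kk[P_1]$ and $\kk[P_2]$ coincide with this common invariant ring.

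The main obstacle is the rigidity step in (i)$\Rightarrow$(ii). In the parallel proof of Theorem~\ref{thm:type1}, the simplicity of $P$ (since no diameter of $\Gale_1$ has both endpoints) reduces the problem directly to Proposition~\ref{Simple}. Here both diameters of $\Gale_2$ have both endpoints, so $P$ is not simple and Proposition~\ref{Simple} does not apply; one must instead show that the $(0,1)$-structure forces the facet-defining normals of $P$ to be primitive and to align exactly with the weight pattern of $\kk[\Pi_2]$, ruling out any exotic lattice rescaling that would give a different toric ring sharing the same Gale-diagram. This is where the bulk of the technical work of the proof will live.
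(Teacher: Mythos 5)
Your direction (ii)~$\Rightarrow$~(i) is exactly the paper's argument: realize $R_1\otimes_\kk R_2$ as $\kk[\Pi_2]$ via Proposition~\ref{p_sum}, read off the weights from the table in Section~\ref{subsection:01}, and apply Theorem~\ref{thm:weights_Gale}. The problem is your direction (i)~$\Rightarrow$~(ii), where you correctly identify the hard rigidity step and then defer it (``this is where the bulk of the technical work of the proof will live'') without supplying it. That deferred step is not a routine verification, and your proposed route to it has two concrete defects. First, it is circular in its hypotheses: weights of $\kk[P]$ and the invariant-theoretic identification $\kk[P]=S^G$ are only available once one knows that $P$ is normal and $\Cl(\kk[P])\cong\ZZ^2$, and condition (i) is a purely combinatorial statement about the face lattice that grants neither. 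Second, the claim that the $(0,1)$-hypothesis ``forces these weights to be the primitive lattice vectors $(\pm1,0),(0,\pm1)$'' is asserted without argument, and the paper's own example $P_{n_1,n_2,n_3}$ shows that rank-two $(0,1)$-polytopes can have non-primitive weights such as $(-2,0)$ and $(2,2)$ (see (\ref{weight_new}) and $\Gale_5$); so primitivity is genuinely something to be proved for $\Gale_2$, not a general feature of $(0,1)$-polytopes.

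The paper closes this gap by a different and weight-free argument. From the combinatorial equivalence $\Phi$ with $\calO_{\Pi_2}$ it transports the structural fact that every facet of $\calO_{\Pi_2}$ contains one of two distinguished vertex clusters, obtaining a partition of the vertices of $P$ into $\{\wb_i\}$ and $\{\zb_j\}$ with $Q_1=\conv(\{\wb_i\})$, $Q_2=\conv(\{\zb_j\})$ of rank one. It then proves $I_P=RI_{Q_1}+RI_{Q_2}$ directly: for a binomial in $I_P$, pairing against every facet normal $\cb_{F'}$ and using that the map $\ZZ\calA(P)\to\calS$ of (\ref{groupS}) is an isomorphism forces the $\wb$-part and the $\zb$-part of the binomial to balance separately, so the binomial splits into one from $I_{Q_1}$ and one from $I_{Q_2}$. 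This gives $\kk[P]\cong\kk[Q_1]\otimes_\kk\kk[Q_2]$, and Theorem~\ref{thm:M} applied to the rank-one factors yields (ii) without ever assuming normality or computing weights of $\kk[P]$. If you want to salvage your approach you would need to first establish normality and torsionfreeness of $P$ under hypothesis (i) and then prove the primitivity of the weights for the $\Gale_2$ configuration; as written, the proposal does not prove (i)~$\Rightarrow$~(ii).
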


\begin{proof}
(i) $\Rightarrow$ (ii) : Since the standard Gale-diagram of the dual polytope of P is orthogonally equivalent to $\Gale_2$, $P$ is combinatorially equivalent to $\calO_{\Pi_2}$.
Therefore, there exists a one-to-one mapping $\Phi$ between the set of all faces of $\calO_{\Pi_2}$ and the set of all faces of $P$ such that $\Phi$ is inclusion preserving.

Let $\ub_1,\ldots,\ub_n$ denote the vertices of $\calO_{\Pi_2}$ corresponding to the poset ideals of $\Pi_2$ including the element which is comparable with any other element of $\Pi_2$, and let $\vb_1,\ldots,\vb_m$ denote the remaining vertices of $\calO_{\Pi_2}$.
Notice that any facet $F$ of $\calO_{\Pi_2}$ contains $\{\ub_1,\ldots,\ub_n\}$ or $\{\vb_1,\ldots,\vb_m\}$, that is,
$d_F(\ub_i)=0 \text{ for all $i\in [n]$, or } d_F(\vb_j)=0 \text{ for all $j\in [m]$}.$
Moreover, we can see that $O_1=\conv(\{\ub_1,\ldots,\ub_n\})$ and $O_2=\conv(\{\vb_1,\ldots,\vb_m\})$ are $(0,1)$-polytopes with $\rank O_1=\rank O_2=1$.

Let $\wb_i=\Phi(\ub_i)$, $\zb_j=\Phi(\vb_j)$, $Q_1=\Phi(O_1)=\conv(\{\wb_1,\ldots,\wb_n\})$ and $Q_2=\Phi(O_2)=\conv(\{\zb_1,\ldots,\zb_m\})$.
We show that $\kk[P]\cong \kk[Q_1]\otimes_{\kk}\kk[Q_2]$, equivalently, $I_P=RI_{Q_1}+RI_{Q_2}$, where $R=\kk[x_\vb : \vb\in P\cap \ZZ^d]$.
Since $I_P\supset RI_{Q_1}$ and $I_P\supset RI_{Q_2}$, we have $I_P\supset RI_{Q_1}+RI_{Q_2}$. 
To prove the reverse inclusion, it is enough to show that a binomial
$b=x_{\wb_{i_1}}\cdots x_{\wb_{i_p}}x_{\zb_{j_1}}\cdots x_{\zb_{j_q}}-x_{\wb_{h_1}}\cdots x_{\wb_{h_s}}x_{\zb_{g_1}}\cdots x_{\zb_{g_t}} \in I_P$ belongs to $RI_{Q_1}+RI_{Q_2}$. 

In this situation, we have
$$\wb'_{i_1}+\cdots+\wb'_{i_p}+\zb'_{j_1}+\cdots+\zb'_{j_q}=\wb'_{h_1}+\cdots+\wb'_{h_s}+\zb'_{g_1}+\cdots+\zb'_{g_t},$$
where for $\vb\in \ZZ^d$, we define $\vb'=(\vb,1)$.

Since for any $F'\in \Psi(P)$, $d_{F'}(\wb_i)=0 \text{ for all $i\in [n]$, or } d_{F'}(\zb_j)=0 \text{ for all $j\in [m]$}$,
we can see that $\ang{\sum_{k\in [p]}\wb'_{i_k}-\sum_{l\in [s]}\wb'_{h_l},\cb_{F'}}=0$ for all $F' \in \Psi(P)$.
Indeed, if $F'$ contains $\wb_i$ for all $i\in [n]$, then
\begin{align*}
\ang{\sum_{k\in [p]}\wb'_{i_k}-\sum_{l\in [s]}\wb'_{h_l},\cb_{F'}}
&=\sum_{k\in [p]}\ang{\wb'_{i_k},\cb_{F'}}-\sum_{l\in [s]}\ang{\wb'_{h_l},\cb_{F'}} \\
&=\sum_{k\in [p]}d_{F'}(\wb_{i_k})-\sum_{l\in [s]}d_{F'}(\wb_{h_l})=0.
\end{align*}
Moreover, if $F'$ contains $\zb_j$ for all $j\in [m]$, then
\begin{align*}
\ang{\sum_{k\in [p]}\wb'_{i_k}-\sum_{l\in [s]}\wb'_{h_l},\cb_{F'}}
&=\ang{\sum_{k\in [p]}\wb'_{i_k}+\sum_{k\in [q]}\zb'_{j_k}-\sum_{l\in [s]}\wb'_{h_l}-\sum_{l\in [t]}\zb'_{g_l},\cb_{F'}} \\
&=\ang{{\bf 0},\cb_{F'}}=0.
\end{align*}
The homomorphism from $\ZZ\calA(P)$ onto $\calS$ given by $\vb' \mapsto \sum_{F\in \Psi(P)}\ang{\vb',\cb_F}\epsilon_F$ is an isomorphism (cf. \cite[Proposition~9.8.17]{Villa}).
Therefore, we have $\sum_{k\in [p]}\wb'_{i_k}-\sum_{l\in [s]}\wb'_{h_l}={\bf 0}$.
Similarly, we have $\sum_{k\in [q]}\zb'_{j_k}-\sum_{l\in [t]}\zb'_{g_l}={\bf 0}$, and hence 
\begin{align*}
b&=x_{\wb_{i_1}}\cdots x_{\wb_{i_p}}(x_{\zb_{j_1}}\cdots x_{\zb_{j_q}}-x_{\zb_{g_1}}\cdots x_{\zb_{g_t}})+
x_{\zb_{g_1}}\cdots x_{\zb_{g_t}}(x_{\wb_{i_1}}\cdots x_{\wb_{i_p}}-x_{\wb_{h_1}}\cdots x_{\wb_{h_s}}) \\
&\in RI_{Q_1}+RI_{Q_2}.
\end{align*}

Since $Q_1$ and $Q_2$ have rank $1$, it follows from Theorem~\ref{thm:M} that $\kk[Q_1]$ and $\kk[Q_2]$ are the Segre products of two polynomial rings, we get desired. 

(ii) $\Rightarrow$ (i) : 
By Proposition~\ref{p_sum} (i) and (ii), $\kk[P]$ can be realized as the Hibi ring of a poset $\Pi_2$.
From its weights given in Section~\ref{subsection:01} and Theorem~\ref{thm:weights_Gale}, we can see that the standard Gale-diagram of the dual polytope of $P$ is orthogonally equivalent to $\Gale_2$.

The last statement follows from the equivalence (i) and (ii).
\end{proof}

\bigskip

These theorems do not give a complete answer to our problems.
We are left with the following questions:

\begin{q}\label{ques}
For any $(0,1)$-polytope $P$ with $\rank P=2$, is $\kk[P]$ isomorphic to a Hibi ring or $\kk[P_{n_1,n_2,n_3}]$ for some $n_1,n_2,n_3\in \ZZ_{>0}$?
In other words, does the relationship ${\bf (0,1)}_2={\bf Order}_2\sqcup \{\kk[P_{n_1,n_2,n_3}] : n_1,n_2,n_3\in \ZZ_{>0}\}$ hold?
\end{q}

\begin{q}
For any $(0,1)$-polytope $P$ with $\rank P=2$, is the standard Gale-diagram of the dual polytope of $P$ orthogonally equivalent to one of the $\Gale_i$'s?
Also, let $P$ and $P'$ be two $(0,1)$-polytopes whose dual polytopes have the standard Gale-diagrams $\Gale_3$, $\Gale_4$ or $\Gale_5$, then does the combinatorial equivalence of $P$ and $P'$ imply the isomorphism of their toric rings?
\end{q}
\noindent If Question~\ref{ques} has a positive answer, then all $(0,1)$-polytopes with rank $2$ are normal and torsionfree.

\subsection{Case (r3)}
Finally, we discuss the normality, torsionfreeness and classification in the case $\rank P \ge  3$.
In fact, unlike the previous cases, these properties are not guaranteed.

\begin{prop}\label{prop:nonnormal}
For any positive integer $r\ge 3$, there exists a non-normal $(0,1)$-polytope $P$ with $\rank P=r$.
\end{prop}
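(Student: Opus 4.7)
The plan is to produce, for each $r \ge 3$, an explicit non-normal $(0,1)$-polytope of rank exactly $r$ via a ``base case plus products'' strategy. First I would exhibit a single non-normal $(0,1)$-polytope $P_{0}$ of rank $3$. Since the paper has already observed that non-normal edge polytopes have rank at least $4$, this base case cannot lie in the edge polytope family; I would search among small $(0,1)$-polytopes outside the standard ``nice'' families (simplices, Segre products of polynomial rings, Hibi rings, the $P_{n_{1},\ldots,n_{k}}$ family of Section~\ref{sec:new}) for one whose semigroup $\ZZ_{\ge 0}\calA(P_{0})$ is strictly smaller than $\ZZ\calA(P_{0}) \cap \RR_{\ge 0}\calA(P_{0})$, and then verify directly that its rank equals $3$.

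Given such a $P_{0}$, for $r \ge 3$ I set $P_{r} := P_{0} \times [0,1]^{r-3}$. This is a $(0,1)$-polytope as a product of $(0,1)$-polytopes. Using the product formula $\rank(A \times B) = \rank A + \rank B + 1$ from Section~\ref{subsec:toric} together with $\rank [0,1] = 0$, an easy induction gives $\rank P_{r} = 3 + (r-3) = r$.

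The non-normality of $P_{r}$ would follow from the lemma: if $Q$ is a non-normal $(0,1)$-polytope then so is $Q \times [0,1]$. To prove this, let $\alpha = (v, k) \in (\ZZ\calA(Q) \cap \RR_{\ge 0}\calA(Q)) \setminus \ZZ_{\ge 0}\calA(Q)$ witness the non-normality of $Q$. Writing $(v, k) = \sum_{i} c_{i}(x_{i}, 1)$ with $c_{i} \in \ZZ$, the lift $\alpha' := (v, 0, k) = \sum_{i} c_{i}((x_{i}, 0), 1)$ lies in $\ZZ\calA(Q \times [0,1])$ by construction, and in $\RR_{\ge 0}\calA(Q \times [0,1])$ because $(v/k, 0) \in Q \times [0,1]$. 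However, any nonnegative-integer expression $\alpha' = \sum_{j} c'_{j}((x'_{j}, y_{j}), 1)$ with $y_{j} \in \{0,1\}$ and $c'_{j} \in \ZZ_{\ge 0}$ forces $y_{j} = 0$ whenever $c'_{j} > 0$ (since the middle $[0,1]$-coordinate of $\alpha'$ is $0$), and this would yield a nonnegative-integer decomposition of $(v, k)$ in $\ZZ_{\ge 0}\calA(Q)$, contradicting the choice of $\alpha$. Iterating the lemma gives the non-normality of $P_{r}$.

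The main obstacle is producing the rank-$3$ base case. Non-normal $(0,1)$-polytopes naturally tend to have higher rank, and the paper's own structural results (normality of order polytopes, of stable set polytopes of perfect graphs, of the $P_{n_{1},\ldots,n_{k}}$ family, and so on) rule out most of the obvious candidates, so a bespoke construction or a computer-aided search over low-rank $(0,1)$-polytopes is likely required; everything beyond this base case is then essentially bookkeeping.
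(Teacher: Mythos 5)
Your overall strategy --- exhibit one non-normal $(0,1)$-polytope of rank $3$ and then multiply by $[0,1]^{r-3}$ --- is exactly the paper's, and your product lemma is actually carried out in more detail than in the paper (which simply asserts that $Q_1\times[0,1]^{r-3}$ is non-normal): your lifting argument for the witness $\alpha=(v,k)$ is correct, modulo the harmless observation that any witness necessarily has $k>0$ so that $v/k\in Q$, and the rank bookkeeping via $\rank(A\times B)=\rank A+\rank B+1$ with $\rank[0,1]=0$ is right.

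However, there is a genuine gap: you never produce the rank-$3$ base case, and that is precisely the nontrivial content of the proposition. You describe a search strategy ("search among small $(0,1)$-polytopes outside the standard nice families\dots") and explicitly flag this as the main obstacle, but a plan to search is not an existence proof. The paper resolves this by writing down the explicit polytope
$$Q_1=\conv(\{(0,0,0,0),\,(1,1,0,0),\,(1,0,1,0),\,(0,1,1,0),\,(0,0,0,1),\,(1,1,1,1)\})\subset\RR^4,$$
verifying (by computer) that $\dim Q_1=4$ and $Q_1$ has $8$ facets, hence $\rank Q_1=3$, and exhibiting the concrete witness $(1,1,1,0,2)\in\bigl(\ZZ\calA(Q_1)\cap\RR_{\ge 0}\calA(Q_1)\bigr)\setminus\ZZ_{\ge 0}\calA(Q_1)$ for non-normality. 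Until you supply such an explicit example (or some other construction of a rank-$3$ non-normal $(0,1)$-polytope), the case $r=3$, and with it the whole proposition, remains unproved.
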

\begin{proof}
Let
$$Q_1=\conv(\{(0,0,0,0), (1,1,0,0), (1,0,1,0), (0,1,1,0), (0,0,0,1), (1,1,1,1)\}).$$
Then, we can see that $Q_1$ has rank 3 and is not normal.
Indeed, we can check that $\dim Q_1=4$ and $Q_1$ has 8 facets by using Magma (\cite{magma}), thus $\rank Q_1=3$.
Moreover, one has $(1,1,1,0,2)\in \ZZ\calA(Q_1)\cap\RR_{\ge 0}\calA(Q_1)$ while $(1,1,1,0,2)\notin \ZZ_{\ge 0}\calA(Q_1)$.

Furthermore, $Q_1\times [0,1]^{r-3}$, where $[0,1]^d$ denotes the $d$ dimensional unit cube, is also a non-normal $(0,1)$-polytope and its rank is equal to $r$.
\end{proof}

\begin{prop}\label{prop:nontorsion}
For any positive integer $r\ge 3$, there exists a non-torsionfree normal $(0,1)$-polytope $P$ with $\rank P=r$.
\end{prop}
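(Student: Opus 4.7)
The plan is to mirror the structure of Proposition~\ref{prop:nonnormal}: first produce a single normal $(0,1)$-polytope $Q_2$ of rank exactly $3$ whose divisor class group has nontrivial torsion, and then boost the rank to arbitrary $r \ge 3$ by taking products with unit intervals.

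For the base case, I would search among small $(0,1)$-polytopes lying outside the known torsionfree families---compressed polytopes, order polytopes, stable set polytopes of perfect graphs, and normal edge polytopes---since all of these are torsionfree by the results cited in the introduction. Natural candidates have the form $\conv(\{{\bf 0}\}\cup\calB_d\cup S)$ for carefully chosen collections $S$ of $(0,1)$-vectors (analogous to the construction of $P_{n_1,\ldots,n_k}$ in Section~\ref{sec:new}, but arranged so that some minor of $\calM_{Q_2}$ has nontrivial gcd). Using a computer algebra system as in the proof of Proposition~\ref{prop:nonnormal}, one verifies the facet structure, confirms normality by checking IDP on the (finitely many) lattice points in $2Q_2$, and computes the Smith normal form of $\calM_{Q_2}$ via Theorem~\ref{thm:class} to detect a nontrivial invariant factor.

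For the inductive step at arbitrary $r\ge 3$, set $P:=Q_2\times [0,1]^{r-3}$, with $[0,1]^0$ interpreted as a point so that the case $r=3$ is handled by $Q_2$ itself. The product formula $\rank(P_1\times P_2)=\rank P_1+\rank P_2+1$ together with $\rank[0,1]^m=m-1$ gives $\rank P=r$. Normality is preserved under products of normal $(0,1)$-polytopes. For the torsion, the key observation is that the matrix $\calM_{Q_2\times [0,1]}$ has a block form
$$\calM_{Q_2\times[0,1]}=\begin{pmatrix}\calM_{Q_2}&\calM_{Q_2}\\ 0&{\bf 1}\\ {\bf 1}&0\end{pmatrix},$$
and elementary row-and-column reductions (subtracting the left block of columns from the right) show that its Smith normal form agrees with that of $\calM_{Q_2}$ augmented by a single extra $1$. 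Iterating, we conclude that $\Cl(\kk[P])\cong \Cl(\kk[Q_2])\oplus \ZZ^{r-3}$, which retains the torsion coming from $\Cl(\kk[Q_2])$.

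The main obstacle is the base case: exhibiting the explicit rank-$3$ example $Q_2$. Because so many standard families of normal $(0,1)$-polytopes are already torsionfree, the sought polytope must be somewhat exotic, and is most likely to be found via a targeted computer search guided by the Smith-normal-form criterion of Theorem~\ref{thm:class} rather than by a purely structural construction.
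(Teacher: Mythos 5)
Your overall architecture matches the paper's: exhibit one rank-$3$ normal $(0,1)$-polytope with torsion in its class group, then multiply by unit cubes to reach rank $r$. But there are two genuine gaps. First, you never actually produce the base-case polytope; an existence proof needs a witness, and the paper supplies one explicitly, namely $Q_2=\conv(\{(0,0,0,0),(1,0,0,0),(0,1,0,1),(0,0,1,1),(1,1,1,0),(1,1,1,1)\})$, which is normal of rank $3$ with $\Cl(\kk[Q_2])\cong\ZZ^3\oplus(\ZZ/2\ZZ)^3$. Acknowledging that finding such an example is ``the main obstacle'' does not discharge that obstacle.

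Second, your reduction of $\calM_{Q_2\times[0,1]}$ is incorrect, and the conclusion you draw from it, $\Cl(\kk[P])\cong\Cl(\kk[Q_2])\oplus\ZZ^{r-3}$, is false. After subtracting the left block of columns from the right, the bottom rows are $(\mathbf{0},\mathbf{1})$ and $(\mathbf{1},-\mathbf{1})$; clearing the right-hand block does contribute one extra invariant factor $1$, but it also leaves the left block in the form $\begin{pmatrix}\calM_{Q_2}\\ \mathbf{1}\end{pmatrix}$, i.e.\ $\calM_{Q_2}$ with an all-ones row appended, and the Smith normal form of this augmented matrix differs from that of $\calM_{Q_2}$. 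Concretely, $\calM_{Q_2}$ has invariant factors $1,1,2,2,2$, while the augmented matrix has $1,1,1,2,2$: one copy of $\ZZ/2\ZZ$ is lost, and indeed the paper computes $\Cl(\kk[Q_2\times[0,1]^{r-3}])\cong\ZZ^r\oplus(\ZZ/2\ZZ)^2$, not $\ZZ^r\oplus(\ZZ/2\ZZ)^3$. Since torsion can genuinely shrink at each product with $[0,1]$, you cannot conclude that it survives all $r-3$ iterations without an actual computation; the paper handles this by writing $\calM_P$ recursively (via the matrices $A_n$) and computing its Smith normal form once and for all, finding exactly two invariant factors equal to $2$ for every $r\ge 3$. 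Your qualitative conclusion happens to be true, but the argument as written does not establish it.
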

\begin{proof}
Let
$$Q_2=\conv(\{(0,0,0,0), (1,0,0,0),(0,1,0,1), (0,0,1,1), (1,1,1,0), (1,1,1,1)\}).$$
Then, we can see that $Q_2$ has rank 3 and is normal but not torsionfree.
Indeed, we can see that $\ZZ\calA(Q_2)=\ZZ^5$ and $Q_2$ has IDP by using Magma, so $Q_2$ is normal.
Moreover, we can compute $\calM_{Q_2}$ and its Smith normal form as follows:
$$\calM_{Q_2}=\begin{pmatrix}
1 &0 &2 &0 &0 &0 \\
1 &0 &0 &2 &0 &0 \\
1 &0 &0 &0 &2 &0 \\
1 &0 &0 &0 &0 &2 \\
0 &1 &2 &0 &0 &0 \\
0 &1 &0 &2 &0 &0 \\
0 &1 &0 &0 &2 &0 \\
0 &1 &0 &0 &0 &2 
\end{pmatrix}
\longrightarrow
\begin{pmatrix}
1 &0 &0 &0 &0 &0 \\
0 &1 &0 &0 &0 &0 \\
0 &0 &2 &0 &0 &0 \\
0 &0 &0 &2 &0 &0 \\
0 &0 &0 &0 &2 &0 \\
0 &0 &0 &0 &0 &0 \\
0 &0 &0 &0 &0 &0 \\
0 &0 &0 &0 &0 &0 
\end{pmatrix}.
$$
Therefore, we have $\Cl(\kk[Q_2])\cong \ZZ^3\oplus (\ZZ/2\ZZ)^3$.

Moreover, $P:=Q_2\times [0,1]^{r-3}$ is also a normal $(0,1)$-polytope and its rank is equal to $r$.
We can calculate $\calM_P$ and its Smith normal form as follows:

$$\calM_P=\begin{pmatrix}
\calM_{Q_2} & \cdots & \calM_{Q_2} \\
 & A_{r-3} &  
\end{pmatrix}
\longrightarrow
\begin{pmatrix}
1 &  &       &  &  &\\
   &\ddots &       &  & &\\
   &  &1 &  & &\\
   &  &       &2 & &\\
   &  &       &  &2 &\\
   & & & & &
\end{pmatrix},
$$
where we define $A_1=
\begin{pmatrix}
1 &1 &1 &1 &1 &1 &0 &0 &0 &0 &0 &0 \\
0 &0 &0 &0 &0 &0 &1 &1 &1 &1 &1 &1
\end{pmatrix}$ and 
$$A_n=\begin{pmatrix}
 & A_{n-1} & & & A_{n-1} & \\
1 &\cdots & 1 & 0  &\cdots  &0 \\
0 &\cdots  &0 & 1 &\cdots &1
\end{pmatrix}
$$
\smallskip
for $n\ge 2$.
Therefore, we obtain $\Cl(\kk[P])\cong \ZZ^r\oplus (\ZZ/2\ZZ)^2$.
\end{proof}

It seems so hopeless to classify the isomorphism classes in this case because even in the case of the Hibi ring, it is difficult to give a complete classification.
In addition, the method using Gale-diagrams is no longer useful.
In fact, there exist two $(0,1)$-polytopes which have the same combinatorial type such that their toric rings are not isomorphic to each other.

\begin{prop}\label{prop:noniso}
For any positive integer $r\ge 3$, there exist two $(0,1)$-polytopes $P$ and $P'$ with the same combinatorial type and $\rank P=\rank P'=r$ such that their toric rings are not isomorphic to each other.
\end{prop}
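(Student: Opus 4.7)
The strategy is to handle the base case $r=3$ by an explicit pair of rank-$3$ $(0,1)$-polytopes, and then lift to arbitrary $r\ge 3$ by Cartesian product with a unit cube, in the same spirit as Propositions~\ref{prop:nonnormal} and \ref{prop:nontorsion}.

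For $r=3$, I would produce two $(0,1)$-polytopes $P_0, P_0'\subset \RR^{d_0}$ of rank $3$ with isomorphic face lattices but non-isomorphic toric rings. The cleanest way to certify non-isomorphism is a degree-$1$ Hilbert series obstruction: if $|P_0\cap\{0,1\}^{d_0}|\ne |P_0'\cap\{0,1\}^{d_0}|$ then $\dim_\kk \kk[P_0]_1\ne \dim_\kk \kk[P_0']_1$, so the two toric rings cannot be $\kk$-algebra isomorphic. Other invariants that would work are normality, the divisor class group, or the number/degrees of minimal generators of the toric ideal. A natural search space is the family of $4$-dimensional $(0,1)$-polytopes with $6$ vertices and $8$ facets, to which $Q_1$ and $Q_2$ from Propositions~\ref{prop:nonnormal} and \ref{prop:nontorsion} already belong; candidate pairs and their combinatorial-equivalence check can be handled with a computer algebra system such as Magma.

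Given $P_0, P_0'$, for $r\ge 3$ set
\[
P := P_0\times [0,1]^{r-3},\qquad P' := P_0'\times [0,1]^{r-3},
\]
with the convention that $[0,1]^0$ is a point. The rank identity $\rank(A\times B)=\rank A+\rank B+1$ from Section~\ref{subsec:toric}, applied inductively together with $\rank [0,1]=0$, gives $\rank [0,1]^n=n-1$, and hence $\rank P=\rank P'=r$. Combinatorial equivalence is preserved under Cartesian products, since the face lattice of $A\times B$ is the product of the face lattices of $A$ and $B$; so $P$ and $P'$ remain combinatorially equivalent.

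Finally, since $\kk[A\times B]\cong \kk[A]\#\kk[B]$ and the Hilbert series of a Segre product is the coefficient-wise product of the factors' Hilbert series, one has
\[
\dim_\kk \kk[P]_1 = \dim_\kk\kk[P_0]_1\cdot 2^{r-3},
\]
and likewise for $P'$. Hence a degree-$1$ Hilbert separation in the base case lifts to one for $\kk[P]$ and $\kk[P']$, so $\kk[P]\not\cong\kk[P']$. The main obstacle is the base case itself: producing two concrete $(0,1)$-polytopes of rank $3$ that are combinatorially equivalent yet differ in a ring invariant preserved by isomorphism requires a careful face-lattice check, but once such a pair exists, the lifting argument is routine.
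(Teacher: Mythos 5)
Your overall strategy (an explicit combinatorially equivalent pair in rank $3$, then multiply by $[0,1]^{r-3}$) is exactly the paper's, and the lifting step is fine. But there are two problems with the base case as you present it. First, your ``cleanest'' obstruction cannot work: for a $(0,1)$-polytope $P\subset\RR^{d}$ every lattice point of $P$ is a vertex of the unit cube lying in $P$, hence an extreme point of $P$, so $P\cap\ZZ^{d}$ is precisely the vertex set of $P$. Combinatorially equivalent $(0,1)$-polytopes therefore always have the same number of lattice points, and $\dim_\kk\kk[P_0]_1=\dim_\kk\kk[P_0']_1$ holds automatically; the degree-$1$ Hilbert function can never separate such a pair. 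Second, and more importantly, you never actually produce the pair $P_0,P_0'$ --- you describe a search space and acknowledge that ``the main obstacle is the base case itself.'' That base case is essentially the entire content of the proposition, so as written the proof is incomplete.

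The paper closes this gap by reusing the polytopes $Q_1$ and $Q_2$ from Propositions~\ref{prop:nonnormal} and~\ref{prop:nontorsion}: a Magma computation confirms that they are combinatorially equivalent (both are $4$-dimensional with $6$ vertices and $8$ facets), and they are distinguished by normality --- $\kk[Q_2]$ is normal while $\kk[Q_1]$ is not, and normality is preserved under $\kk$-algebra isomorphism. This is one of the fallback invariants you listed, so your outline points in the right direction, but to make the argument complete you must commit to a concrete pair, verify the face-lattice isomorphism, and invoke an invariant that genuinely distinguishes the two rings.
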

\begin{proof}
Actually, $Q_1$ and $Q_2$ appearing in Propositions~\ref{prop:nonnormal} and \ref{prop:nontorsion} satisfy those conditions.
Magma confirms that these are combinatorially equivalent.
On the other hand, $\kk[Q_2]$ is normal, but $\kk[Q_1]$ is not.
Therefore, these are not isomorphic.

The same holds for $Q_1\times [0,1]^{r-3}$ and $Q_2\times [0,1]^{r-3}$.
\end{proof}


\bigskip

\end{document}